\newcommand{\ds}{\displaystyle}
\newcommand{\bt}{\hat t}
\newcommand{\bx}{\hat x}
\newcommand{\bs}{\hat s}
\newcommand{\by}{\hat y}
\renewcommand{\tt}{\tilde t}
\newcommand{\tx}{\tilde x}
\newcommand{\ts}{\tilde s}
\newcommand{\ty}{\tilde y}
\newcommand{\tv}{\tilde v}
\newcommand{\tw}{\tilde w}
\newcommand{\wtildechi}{\widetilde \chi}
\newcommand{\overlinechi}{\widehat \chi}
\newcommand{\LambdaB}{B}
\numberwithin{equation}{section}
\newcommand{\supp}{\mathrm{supp}\,}
\newcommand{\ind}[1]{\mathbf{1}_{#1}\,}
\newcommand{\R}{\mathbb{R}}
\newcommand{\mA}{\mathcal{A}}
\newcommand{\mB}{\mathcal{B}}
\newcommand{\mU}{\mathcal{U}}
\newcommand{\mmA}{\mathfrak{A}}
\newcommand{\e}{\varepsilon}
\renewcommand{\epsilon}{\varepsilon}
\newcommand{\eps}{\varepsilon}
\renewcommand{\u}{u}
\DeclareMathOperator{\spn}{span}
\newcommand{\argmin}{\mathrm{argmin}\;}
\newcommand{\minp}{\mathrm{min}^\prime\,}
\newcommand{\aminp}{\mathrm{argmin}^\prime\,}
\newtheorem{thm}{\bf Theorem}[section]
\newtheorem{prop}[thm]{\bf Proposition}
\newtheorem{rmq}[thm]{\bf Remark}
\newtheorem{lem}[thm]{\bf Lemma}
\newtheorem{defi}[thm]{Definition} 
\newcommand{\tg}{\textcolor{black}}
\newcommand{\Leb}{\mathrm{Leb}}
\date\today
\author{Emeric Bouin 
\footnote{
CEREMADE, UMR 7534 CNRS \& PSL University, Universit\'e Paris-Dauphine, Place de Lattre de Tassigny, Paris, France.\newline
\texttt{bouin@ceremade.dauphine.fr}}
\and
Vincent Calvez \footnote{Institut Camille Jordan, UMR 5208 CNRS \& Universit\'e Claude Bernard Lyon 1, and Inria, project team Dracula, Lyon, France.\newline 
\texttt{vincent.calvez@math.cnrs.fr}}\and
Emmanuel Grenier \footnote{Unit\'e de Math\'ematiques Pures et Appliqu\'ees, UMR 5669 CNRS \& Ecole Normale Sup\'erieure de Lyon, and Inria, project team NUMED, Lyon, France.\newline
\texttt{emmanuel.grenier@ens-lyon.fr}}\and
Gr\'egoire Nadin \footnote{Laboratoire Jacques-Louis Lions, UMR 7598 CNRS \& Sorbonne Universit\'e, Paris, France.\newline 
\texttt{nadin@ann.jussieu.fr}}}
\title{Large scale asymptotics of velocity-jump processes and non-local Hamilton-Jacobi equations}
\begin{document}
\maketitle

\begin{abstract}
We investigate a simple velocity jump process in the regime of large deviation asymptotics. New velocities are taken randomly  at a constant, large, rate from a Gaussian distribution with vanishing variance. The Kolmogorov forward equation associated with this process is the linear BGK kinetic transport equation.
We derive a new type of Hamilton-Jacobi equation which is nonlocal with respect to the velocity variable. We introduce a suitable notion of viscosity solution, and we prove well-posedness in the viscosity sense. We also prove convergence of the logarithmic transformation towards this limit problem.  
Furthermore, we identify the variational formulation of the solution by means of an action functional supported on piecewise linear curves.  
As an application of this theory, we compute the exact rate of acceleration in a kinetic version of the celebrated Fisher-KPP equation in the one-dimensional case. 
\end{abstract}
\noindent{\bf Keywords:} Large deviations, Piecewise deterministic Markov processes (PDMP), Hamilton-Jacobi equations, Viscosity solutions, Reaction-transport equations,   Front acceleration.\\
\noindent{\bf 2020 MSC:} 35F21; 35D40; 35B40; 35R45; 82C40 (primary); 35Q92; 35A15; 35A01;
35A02; 35A08 (secondary).

\pagestyle{plain}
\pagenumbering{arabic}

\section{Introduction}

This paper is mainly concerned with the asymptotic limit of the following linear kinetic transport equation as $\eps\to 0$,
\begin{equation} \label{eq:main}
\partial_t f^{\eps}(t,x,v) + v\cdot \nabla_{x} f^{\eps}(t,x,v) = \frac{1}{\epsilon} \left( M_\eps(v) \rho^{\eps}(t,x)- f^{\eps}(t,x,v) \right ), \quad t>0\, , \, x\in \R^n\, , \, v\in \R^n\, .
\end{equation}
Here, $f^\eps(t,x,v)$ denotes the density of particles at time $t>0$ in the phase space $\R^{n}\times \R^n$ (position$\times$velocity), and $\rho^\eps(t,x)$ is the spatial density: 
$\rho^\eps(t,x) = \int_{\R^n} f^\eps(t,x,v') dv'$.
Particles move with velocity $v$. Reorientation occurs at random exponential times with rate $1/\eps$. 
The velocity distribution of reorientation events is given, and is denoted by  $M_\eps(v)$. We opt here for the Gaussian distribution with variance $\eps$:
$M_\eps(v) = \frac{1}{(2\pi\eps)^{n/2}} \exp\left( - \frac{|v|^2}{2\eps} \right)$.
However, we believe that our methodology could be applied to a broader range of distributions.

As such, the underlying velocity-jump process belongs to the class of Piecewise Deterministic Markov Processes (PDMP).  

Equation \eqref{eq:main} is obtained from the unscaled problem  ($\eps = 1$) in the scaling regime $\left(\frac{ t}{\eps},\frac{ x}{\eps^{3/2}},\frac{ v}{\eps^{1/2}} \right)$ which is appropriate to establish a Large Deviation Principle. Indeed, we prove under some conditions that the rate function $u^\eps(t,x,v) = -\eps \log f^\eps(t,x,v)$ converges, as $\eps \to 0$, towards the solution of the following problem with appropriate initial condition:
\begin{equation}\label{eq:limit}
\begin{cases}
\displaystyle\max\left( \partial_t \u(t,x,v) + v \cdot \nabla_x \u(t,x,v) - 1 , \u(t,x,v) - \min_{v'\in \R^n} \u(t,x,v') - \dfrac{\vert v\vert^2}2 \right) = 0\, ,  
\medskip \\
\displaystyle\partial_t \left( \min_{v'\in \R^n}\u(t,x,v') \right) \leq 0\, , 
\quad \text{and}\quad \displaystyle\partial_t \left( \min_{v'\in \R^n}\u(t,x,v') \right) = 0  \; \text{ if }\; \underset{v'\in \R^n}{\argmin}\u(t,x,v') = \left\lbrace 0 \right\rbrace\,.
\end{cases}
\end{equation}
%
About the initial data, we assume for simplicity that it is well-prepared, in the form $f^\eps(0,x,v) = \exp\left( - \frac{ \u_0(x,v)}\eps\right)$. This sets  $u(0+,x,v) = \min\left (u_0(x,v) ,\min_{v'}  u_0(x,v') + |v|^2/2\right )$ as the initial condition for the limit problem \eqref{eq:limit}.

The notion of viscosity solution of \eqref{eq:limit} is made precised by means of sub- and super-solutions in the pair of Definitions \ref{def:subsol intro} and \ref{def:supersol intro} below.

We derive the representation formula for \eqref{eq:limit}:
\begin{equation}\label{eq:kinHopf Lax}
u(t,x,v) = \inf_{\footnotesize \begin{array}{c}
\{\gamma: \gamma(t) = x, \dot\gamma(t) = v\}
\end{array}} \left \{ \mA_0^t[\dot \gamma] + u_0(\gamma(0),\dot\gamma(0))  \right  \}
\end{equation}
where the action of a piecewise linear curve $\gamma$ over the time interval $(0,t]$ is given by:
\begin{equation}\label{eq:action-intro}
\mA_0^t[\dot \gamma] = \frac12 \sum_{\sigma\in \dot\Gamma_*} |\sigma|^2 + \Leb \left\{s\in (0,t]:\dot\gamma(s) \neq 0\right\}\,, 
\end{equation}
where $\dot \Gamma_*$ denotes the finite list of velocities $(\dot \gamma(s))_{s\in (0,t]}$ but the initial one. Alternatively speaking, each non-zero velocity $\sigma$ after the first velocity jump contributes to a single cost of $\frac12|\sigma|^2 $ and a running cost of one per unit of time. We refer to Definition \ref{def:action} for a precise definition of the action $\mA$.

Our approach is similar to the analysis of the heat equation in the scaling regime $\left(\frac{ t}{\eps},\frac{ x}{\eps}\right)$, that is, with a vanishing viscosity: $\partial_t \rho^\eps(t,x) - \frac\eps2 \Delta \rho^\eps(t,x) = 0$. Indeed, it is well-known (see \textit{e.g.} \cite{fleming_exit_1977,evans_pde_1985,
evans_pde_1989,barles_wavefront_1990}) that, under appropriate conditions, $u^\eps(t,x) = -\eps \log \rho^\eps(t,x)$ converges  locally uniformly towards the viscosity solution of the following Hamilton-Jacobi equation
\begin{equation} 
\partial_t u(t,x) + \frac12|\nabla_x u(t,x)|^2 = 0\, ,  \label{eq:HJ-heat}
\end{equation}
with appropriate initial condition. Moreover, $u$ is given by the Hopf-Lax variational formula $u(t,x) = \inf_y \left \{ \frac{|x-y|^2}{2t} + u_0(y) \right \}$. The latter formulation is in correspondence with \eqref{eq:kinHopf Lax}, whereas we claim that \eqref{eq:HJ-heat} and \eqref{eq:limit} are analogous. 


To the best of our knowledge, system \eqref{eq:limit} is original. We refer to it as a Hamilton-Jacobi problem by analogy with \eqref{eq:HJ-heat} which is obtained via a similar procedure. Also, the first equation in \eqref{eq:limit}, and   the first part of the action \eqref{eq:action-intro} are similar to quasi-variational inequalities that can be found in the classical formulation of impulse control problems \cite{bensoussan_impulse_1984,
barles_deterministic_1985,
perthame_remarks_1985}. Indeed, sudden velocity changes that persist in the limit $\eps\to 0$ can be viewed as impulses with a cost $|\sigma|^2/2$.  
In the case of a compactly supported velocity distribution $M(v)$, the same procedure leads to a standard Hamilton-Jacobi equation in the space variable in the scaling regime $\left(\frac{ t}{\eps},\frac{ x}{\eps}\right)$ \cite{bouin_kinetic_2012,bouin_hamilton-jacobi_2015, caillerie_large_2017}. However, there are no similar impulses persisting in the limit $\eps\to 0$, and so the two cases (compactly supported {\em vs.} Gaussian distribution) behave quite differently (see further discussion below in the next paragraph and Section \ref{sec:intro compact}).

\paragraph{Connection with large deviations.}
\color{black}
Our work can be viewed as a prefiguration of a  large deviation result for the underlying velocity jump process beneath \eqref{eq:main}. 
We follow the methodology of Fleming's logarithmic transformation \cite{fleming_exit_1977},  Evans and Ishii \cite{evans_pde_1985}, Evans and Souganidis,\cite{evans_pde_1989} and Barles, Evans and Souganidis  \cite{barles_wavefront_1990}, in which PDE techniques were successfully applied to derive large deviations results  in the context of viscosity solutions of Hamilton-Jacobi equations. We also refer the reader to the monograph by Feng and Kurtz for a comprehensive presentation of this approach in the context of stochastic processes \cite[Chapter 6]{feng_large_2006}.

In particular, consider, in dimension $n=1$, the Markov process $(X'_t,V'_t)$ whose law is described by \eqref{eq:main}, starting from the origin with zero velocity, that is, $f(0,x',v') = \delta_{(0,0)}$. Then, we have
\begin{align*}
\frac{1}{t}\log\mathbb{P}\left ( X'_{t}\geq ct^{3/2}\right ) 
&= \frac1t \log \left (  \int_{ x' \geq ct^{3/2}} f(t,x',v') dx'dv' \right )\\
& = \e\log \left ( \int_{x \geq c} \exp\left ( -\frac{u_\e(1,x,v)}{\e}\right )dxdv \right )+o(1)
\end{align*}
after the change of  variables $x=\frac{x'}{t^{3/2}}$ and $v=\frac{v'}{t^{1/2}}$, and the notation  $ t= \frac1\e$, in accordance with the scaling regime considered in \eqref{eq:main}. 
We hereby develop analytical tools that would contribute to  show that the right-hand-side converges to $-\min_{\{x\geq c, v\in \R\}}u(1,x,v)$, where $u$ is the unique viscosity solution of \eqref{eq:limit}. Therefore, we shall say that $(X'_t)$ satisfy a large deviation principle with rate $\frac1{t^{3/2}}$ and rate function $\min_{v\in \R} u(1,\cdot,v)$. Crucial steps are the convergence of $u^\eps$ towards $u$ (Section \ref{sec:Conv}), and the handling of singular initial data, such as in Section \ref{sec:acc}.

One salient feature of this large deviation principle is the nature of the action\eqref{eq:action-intro}. While large deviations of random paths are measured typically by minimizing the classical action $\int_0^t L(\gamma(s),\dot \gamma(s))\, ds$, for an appropriate Lagrangian function $L$,  over smooth curves $\gamma$ (see the original theorem by Schilder \cite{schilder_asymptotic_1966} for the Brownian motion, and the introduction in \cite{feng_large_2006} for further examples), here the action is of a different nature, as it is supported by piecewise linear trajectories instead of smooth curves. 

The case of bounded velocities, however, falls into the standard framework with an action deriving from a Lagrangian function $L(x,v)$ as usual, see Section \ref{sec:intro compact}. Moreover, the rate function $u$ does not depend on the variable $v$ due to an averaging property only valid for bounded velocities \cite{bouin_kinetic_2012,bouin_hamilton-jacobi_2015, caillerie_large_2017}, see also \cite{faggionato_averaging_2008}. Hence, the possibility of arbitrarily large velocities reveals a new scaling regime calling for an extended notion of Lagrangian functionals.  

After the completion of this work, the connection with the notion of {\em big jump} in large deviations was notified to the authors by Jara and Mallein \cite{Jara-Mallein-perso,jara_mallein_note}. Roughly speaking, the distribution of spatial increments of the underlying velocity-jump process falls into the subexponential regime, for which Cram\'er’s condition does not hold. In that case, the large deviation events are dominated by a single large increment, see \cite{denisov_large_2008} and references therein, and also \cite{dyszewski_large_2020} in the context of branching random walks. This single large increment will be illustrated in Proposition \ref{prop:sigma1} where the minimizing curves are described to some extent.


\paragraph{Non local Hamilton-Jacobi equations.}
The analysis of some non-local Hamilton-Jacobi equations can be found in the literature, see for instance \cite{soner_optimal_1988,
alvarez_viscosity_1996,
awatif_equations_1991}, and the series of papers about non-local eikonal equations in the context of dislocation dynamics in crystals, see \textit{e.g.} \cite{cardaliaguet_front_2000,
cardaliaguet_front_2001,
alvarez_existence_2005,
barles_uniqueness_2009} and the references therein; and also \cite{barles_geometrical_2003,da_lio_nonlocal_2004} in the context of geometric motions. However, there seems to be no link between these works and ours.

\paragraph{Organization of the paper.}
Section \ref{subsec:heuristics} contains an informal discussion about the limit system \eqref{eq:limit} and some heuristics about the asymptotics leading from \eqref{eq:main} to \eqref{eq:limit}. 

In order to state our results, we need a proper definition of viscosity solutions of \eqref{eq:limit}, as it does not fit apparently in the standard theory of Hamilton-Jacobi equations. This is the content of Section \ref{sec:intro viscosity}. The results of uniqueness of viscosity solutions and convergence of $u^\eps =-\eps \log f^\eps$ as $\eps\to 0$, as well as the representation formula \eqref{eq:kinHopf Lax} are presented in Section \ref{sec:intro results}. The proof of uniqueness is contained in Section \ref{sec:Comp} for the case of bounded solutions (along the spatial variable), then extended in Appendix \ref{sec:unbded} to the case of unbounded solutions assuming quadratic bounds. Section \ref{sec:Conv} contains the proof of convergence. Section \ref{sec:variational} is devoted to establishing the variational representation formula.

We discuss some qualitative properties of the solutions in Section \ref{sec:intro bdd vel}, related to some explicit computations of the action $\mA$ contained in Section \ref{sec:kernel}. Finally, an application to front acceleration in reaction-transport equations is presented in Section \ref{sec:intro accel}, which is a summary of Section \ref{sec:acc}. 
%

\paragraph{Notations.} We introduce the following notations for the sake of conciseness: 
\begin{equation*}
\begin{cases}
\displaystyle\minp u(t,x) = \min_{v'\in \R^n} \u(t,x,v')\\
\displaystyle\aminp u(t,x) = \underset{v'\in \R^n}{\argmin}\u(t,x,v')
\end{cases}
\end{equation*}
If $u$ is  lower semi-continuous ({\em resp.} upper semi-continuous), we denote by $u(0+,x,v)$ its lower limit ({\em resp.} upper  limit) at time $t=0$: 
\[
u(0+,x,v) = \liminf_{\footnotesize\begin{array}{c}
\tau\to 0\vspace{-2pt}\\ \tau>0
\end{array}}u(\tau,x,v) \quad \left ( \text{\em resp.}\; \limsup_{\footnotesize\begin{array}{c}
\tau\to 0\vspace{-2pt}\\ \tau>0
\end{array}}u(\tau,x,v)\right )
\]

\paragraph{Acknowledgement.} \emph{The authors are indebted to Guy Barles who kindly shared a number of comments on some preliminary version of this work. He should be credited for the simplification of the proof of the comparison principle, the judicious use of half-relaxed limits, and the appropriate formulation of the initial data. Further discussion with him encouraged the authors to seek a representation formula, leading to the completion of the approximation of geometric optics in Section \ref{sec:acc}. The authors also thank Milton Jara and Bastien Mallein for pointing the insightful connection with the regime of big jump in large deviations. This article has benefited from a careful reading by two anonymous reviewers, whose comments have lead to an improvement of the manuscript. \\
This project has received funding from the European Research Council (ERC) under the European Union’s Horizon 2020 research and innovation programme (grant agreement No 639638 and grant agreement No 865711).}

\subsection{Informal description of the dynamics and heuristics}
\label{subsec:heuristics}

The system  \eqref{eq:limit}  is not  a standard Hamilton-Jacobi equation. The first equation of \eqref{eq:limit} does not contain enough information due to the occurrence of $\minp \u$ for which extra dynamics are required. Although it seems somehow sparse, the two additional (in)equations $\partial_t \left(\minp u\right) \leq 0\; (=0)$ are sufficient to determine a unique solution of the Cauchy problem, as stated in the comparison principle below (Theorem \ref{theo:comp}).

In order to get some insight about the well-posedness of  \eqref{eq:limit}, we propose the following description of the typical dynamics of its solution $u$. 
The first condition in \eqref{eq:limit} guarantees that the following constraint must be satisfied everywhere: 
\begin{equation}\label{eq:parabolic constraint}
u(t,x,v) \leq  \minp u(t,x) + \dfrac{\vert v\vert^2}2 \, .
\end{equation}  
Consequently, the solution reaches its global minimum with respect to the velocity variable at $v = 0$. 
Furthermore, the following dichotomy holds: 
\begin{enumerate}[(i)]\item either the constraint is saturated: $u = \minp u + \frac{|v|^2}2$, 
\item or the solution is driven by free transport: $\partial_t u + v \cdot \nabla_x \u = 1$.
\end{enumerate}
Then, two more cases must be distinguished: if $v=0$ is the only global minimal point with respect to velocity ($\aminp\u(t,x) = \left\lbrace 0 \right\rbrace$), then the minimal value does not change, see Figure \ref{fig:Sua}. Hence, the parabolic constraint \eqref{eq:parabolic constraint} does not change as well. Nevertheless, the solution in the unsaturated area can still evolve by free transport and decay. If it touches the minimal value somewhere else, then the condition $\aminp\u(t,x) = \left\lbrace 0 \right\rbrace$ is not satisfied anymore, and the minimal value can possibly decrease, together with the parabolic constraint, see Figure \ref{fig:Sub}. It is the decay in the free zone that drives the global decay of the solution.

\begin{figure}[t]
\begin{center}
\begin{subfigure}{0.45\linewidth}
\includegraphics[width=\linewidth]{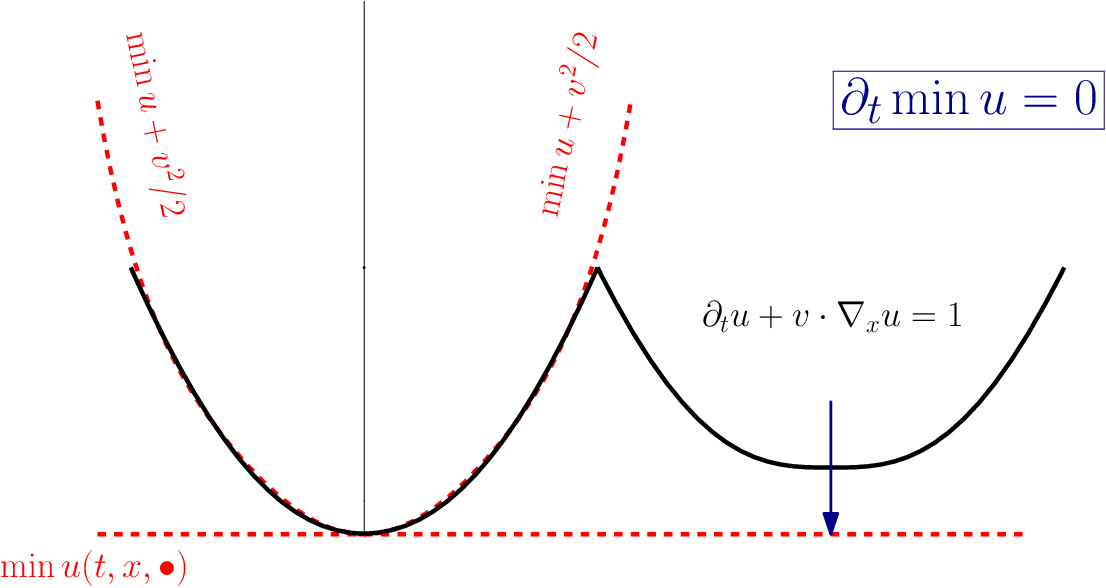}
\caption{The case $\aminp u(t,x) = \{0\}$}
\label{fig:Sua}
\end{subfigure}
\begin{subfigure}{0.45\linewidth}
\includegraphics[width=\linewidth]{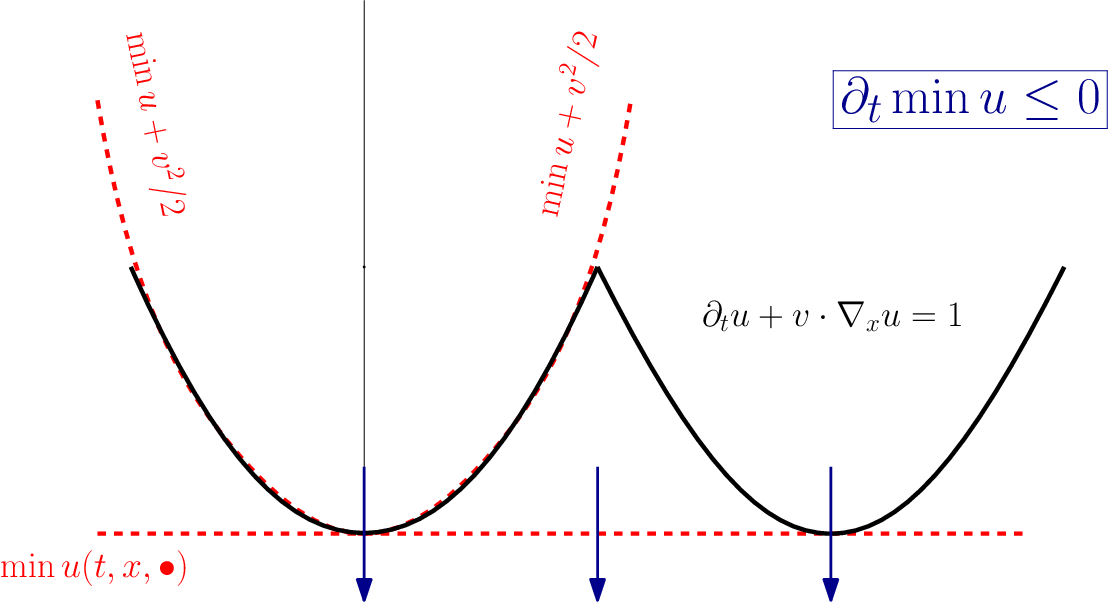}
\caption{The case $\aminp u(t,x) \neq \{0\}$}
\label{fig:Sub}
\end{subfigure}
\caption{Typical dynamics of solutions to \eqref{eq:limit}.}
\end{center}
\end{figure}

We also propose the following heuristics to describe the link between \eqref{eq:main} and \eqref{eq:limit}. Firstly, \eqref{eq:main} is equivalent to the following equation on $u^\eps$: 
\begin{equation}\label{WKB1}
\partial_t \u^{\eps}(t,x,v) + v \cdot \nabla_x  \u^{\eps}(t,x,v) - 1 = - \frac{1}{(2 \pi\eps)^{n/2} }
\int_{\R^n} \exp \left( \frac{\u^{\eps}(t,x,v) - \u^{\eps}(t,x,v') - \vert v\vert^2/2 }{\e} \right) dv'\, .
\end{equation}
On the one hand, it is immediate that the constraint \eqref{eq:parabolic constraint} is fulfilled in the limit $\eps\to 0$ provided that the left-hand-side is locally uniformly bounded. On the other hand, the continuity equation $\partial_t \int f^\eps\, dv + \nabla_x\cdot \int v f^\eps\, dv = 0$ is equivalent to 
\begin{equation}  
\int_{\R^n} \left(\partial_t u^\eps + v \cdot \nabla_x u^\eps \right)\, d\mu^\eps(v) = 0\, ,\quad  d\mu^\eps(v)   =  \dfrac{f^\e(t,x,v)}{\rho^\e(t,x)}\, dv\, .
\label{eq:heuristics 1}
\end{equation}


The probability measure $d\mu^\eps$ is expected to concentrate on the minimum points of $u$ with respect to $v$ as $\eps \to 0$. Let assume that we do have in some sense, 
\begin{equation} 
d\mu^\eps \rightharpoonup  \sum_{w\in \aminp u(t,x)} p_w \delta(v - w)\;  =\; p_0 \delta(v) + \sum_{w'\in \aminp u(t,x)\setminus\{0\}} p_{w'} \delta(v - w') \, ,
\label{eq:heuristics 2} \end{equation}
where the weights satisfy $\sum p_{w}= 1$. 
The constraint \eqref{eq:parabolic constraint} at each $w'\in \aminp u(t,x)\setminus\{0\}$ is clearly unsaturated, in the sense that $u(t,x,w') < \minp u+ {\vert w' \vert^2}/2$. There, we expect to see the right-hand-side contribution of \eqref{WKB1} vanish. This would lead to $\partial_t u(t,x,w') + w' \cdot \nabla_x u (t,x,w') = 1$ for each  $w'\in \aminp u(t,x)\setminus\{0\}$. Plugging this into \eqref{eq:heuristics 1}, and using \eqref{eq:heuristics 2}, we obtain successively,
\begin{multline*}
0=\sum_{w\in \aminp u(t,x)} p_{w}\left( \partial_t u + w \cdot \nabla_x u  \right) = p_0 \partial_t u(t,x,0) + \sum_{w'\in \aminp u(t,x)\setminus\{0\}} p_{w'} \\
=
 p_0 \partial_t u(t,x,0) + 1 - p_0\, .
 \end{multline*}
As we have formally $\partial_t u(t,x,0) = \partial_t \left( \minp u \right)(t,x)$ by the chain rule, we expect eventually that $\partial_t ( \minp u )  \leq 0$ and even $\partial_t (\minp u)  = 0$ if $p_0 = 1$, that is, somehow $\aminp u(t,x) = \left\lbrace 0 \right\rbrace$. All this reasoning is purely formal, but we shall make it rigorous in Section \ref{sec:Conv} with a different approach. 
It is indeed not necessary to describe accurately the limit of the probability measure $d\mu_\eps$ to establish the connection between \eqref{eq:main} and \eqref{eq:limit}. Besides, we believe that the characterization of the limiting measure would require a higher order description involving correctors, simply because an informal application of Laplace's method to decipher the weights (when $u_\eps$ does not depend on $\eps$, say), would require the knowledge of the Hessian at each mimimum point, a quantity which is far beyond our level of description.


\subsection{Informal connection between the variational formulation  and the Hamilton-Jacobi equation}

It is possible to recover, at least partially and informally, the Hamilton-Jacobi formulation  \eqref{eq:limit} from the variational formulation \eqref{eq:kinHopf Lax}. The action \eqref{eq:action-intro} is composed of a running cost which is absolutely continuous with respect to the Lebesgue measure, with rate $\mathbf{1}_{\{\dot \gamma \neq 0\}}$, and a discrete part, with a cost $|\sigma^2|/2$ at each new "impulsion". As in \cite{barles_deterministic_1985}, the impulse part leads to an implicit obstacle problem, which can be derived informally by the following calculation issued from \eqref{eq:kinHopf Lax},
\begin{equation}
\max \left \{ \lim_{s\to 0} \left ( \dfrac{u(t,x,v) - u(t-s,x-sv,v)}s -  \mathbf{1}_{v \neq 0}\right ) , u(t+,x,v) - u(t-,x,v') - \frac{|v|^2}{2}\right \} = 0
\end{equation}
where each occurrence in the supremum  corresponds to the alternative between a free run before $t$, or an impulse at time $t$. In contrast with the usual formulation, see {\em e.g.} \cite{barles_deterministic_1985}, the supremum is not taken over $v$ as it is a variable and not an unknown associated with the minimizing curves in the standard Lagrange variational formulation. Another difference with \cite{barles_deterministic_1985} is that the impulse control problem comes usually with a cost depending on $v'-v$, which is bounded below by a positive constant. Here, the cost depends only upon the posterior velocity $v$, and it is obviously not uniformly positive. Nevertheless, this formal procedure leads naturally to the following quasi-variational inequality:
\begin{equation}
\max \left \{ \partial_t u(t,x,v) + v\cdot \nabla_x u(t,x,v) -  \mathbf{1}_{v \neq 0} , u(t,x,v) - \min_{v'} u(t,x,v') - \frac{|v|^2}2 \right \} = 0\,. \label{eq:informal HJ}
\end{equation}  
The second line of \eqref{eq:limit} comes from the examination of this relationship at $v=0$. Indeed, at this point, we see that the second inequality in \eqref{eq:informal HJ} is an equality: $u(t,x,0) = \min' u(t,x)$. Therefore the first inequality brings the following information: $\partial_t u(t,x,0) \leq 0$, which can be reformulated as $\partial_t (\min' u)\leq 0$. The last condition in \eqref{eq:limit} seems more delicate to interpret. At this stage, we rely on the heuristics of the Figure \ref{fig:Sua}/\ref{fig:Sub} to explain this additional piece of information which is essential for the well-posedness of \eqref{eq:limit}.

The rigorous link between the Hamilton-Jacobi formulation  \eqref{eq:limit} and the variational formulation \eqref{eq:kinHopf Lax} is the purpose of Section \ref{sec:variational}.

\subsection{The notion of viscosity solution}
\label{sec:intro viscosity}

Equation \eqref{eq:limit} can be viewed as a coupled system of Hamilton-Jacobi equations on $u$ and $\minp\u$. Accordingly, we define viscosity solutions of \eqref{eq:limit} using a pair of  test functions as {\em e.g.} in \cite{lenhart_viscosity_1988,
engler_viscosity_1991}. 

\begin{defi}[Sub-solution]\label{def:subsol intro}
Let $\underline{u}_0$ be a continuous function, and $T>0$. An upper semi-continuous function $\underline{u}$ is a \textbf{viscosity sub-solution} of \eqref{eq:limit} on $(0,T) \times \R^{2n}$ with initial data $\underline{u}_0$ if the following conditions are fulfilled:\medskip \\
\noindent{\em (i)} $\underline{u}(0+,\cdot,\cdot) \leq  \underline{u}_0 $.\medskip \\
\noindent{\em (ii)} It satisfies the constraint
\[ \forall (t,x,v)\in (0,T)\times\R^{2n}\quad \underline{u}(t,x,v) -  \minp \underline{u}(t,x) - \dfrac{\vert v\vert^2}{2} \leq 0\, . \]
\\
{\noindent{\em (iii)}}
For all pair of test functions $(\phi,\psi) \in \mathcal{C}^1\left( (0,T) \times \R^{2n} \right) \times \mathcal{C}^1\left( (0,T) \times \R^{n} \right) $, if $(t_0,x_0,v_0)$ is such that both $\underline{u}(\cdot,\cdot,v_0) - \phi(\cdot,\cdot,v_0)$ and $  \minp \underline{u}  - \psi$ have a local maximum at $(t_0,x_0)$ with $t_0>0$, then
\begin{equation*}
\begin{cases}
\partial_t \phi(t_0,x_0,v_0) + v_0 \cdot \nabla_x \phi(t_0,x_0,v_0) - 1 \leq 0, \medskip \\
\partial_t \psi(t_0,x_0) \leq 0 .
\end{cases}
\end{equation*}
\end{defi}

\begin{defi}[Super-solution]\label{def:supersol intro}
Let $\overline{u}_0$ be a continuous function, and $T>0$. A lower semi-continuous function $\overline{u}$ is a \textbf{viscosity super-solution} of \eqref{eq:limit} on $(0,T) \times \R^{2n}$  with initial data $\overline{u}_0$ if the following conditions are fulfilled:
\medskip\\
{\noindent{\em (i)}} 
$ \overline{u}(0+,\cdot,\cdot) \geq \overline{u}_0$.\medskip \\
{\noindent{\em (ii)}}
For all pair of test functions $(\phi,\psi) \in \mathcal{C}^1\left( (0,T) \times \R^{2n} \right) \times \mathcal{C}^1\left( (0,T) \times \R^{n} \right) $, if $(t_0,x_0,v_0)$ is such that both $\overline{u}(\cdot,\cdot,v_0) - \phi(\cdot,\cdot,v_0)$ and $  \minp u  - \psi$ have a local minimum  at $(t_0,x_0)$ with $t_0>0$, then 
\begin{equation}\label{eq:S2 intro}
\begin{cases}
\displaystyle\partial_t \phi(t_0,x_0,v_0)  + v_0 \cdot \nabla_x \phi(t_0,x_0,v_0) - 1 \geq 0 & \displaystyle \text{if} \quad \overline{u}(t_0,x_0,v_0) - \minp  \overline{u}(t_0,x_0) - \dfrac{\vert v_0\vert^2}{2} < 0, \medskip \\
\partial_t  \psi(t_0,x_0) \geq 0 , &  \text{if} \quad \aminp \overline{u}(t_0,x_0) = \left\lbrace 0 \right\rbrace .
\end{cases}
\end{equation}
%
%
%
%
\end{defi}

Let us mention that the mimimality (\textit{resp.} maximality) condition in the definition of the super- (\textit{resp.} sub-) solution arises with respect to variables $(t,x)$ only. This is consistent with the fact that there is no derivative in the velocity variable in \eqref{eq:limit}.

\begin{defi}[Solution]
Let $u_0$ be a continuous function, and $T>0$. A function $u$ is a \textbf{viscosity solution} of \eqref{eq:limit} on $(0,T) \times \R^{2n}$  with initial data $u_0$ if its upper (\textit{resp.} lower) semi-continuous envelope is a sub- (\textit{resp.} super-) solution in the sense of definitions \ref{def:subsol intro} and \ref{def:supersol intro}.
\end{defi} 


\subsection{Statement of the main results}
\label{sec:intro results}

The following theorem states a comparison principle for  viscosity  (sub/super-)solutions of the  system  \eqref{eq:limit}. This establishes uniqueness of viscosity solutions as a corollary. The proof is contained in Section \ref{sec:Comp}.

\begin{thm}[Comparison principle]\label{theo:comp}
%
%
Let $\underline{u}$ (resp. $\overline{u}$) be a viscosity sub-solution (resp. super-solution) of \eqref{eq:limit} on $(0,T)\times\R^{2n}$  with continuous initial data $\underline{u}_0 \leq \overline{u}_0$. Assume that $\underline{u}$ and $\overline{u}$ are such that 
\begin{equation} \label{eq:v2 plus borne}
 \overline{u} - \frac{\vert v\vert^2}2 \in L^\infty\left((0,T)\times\R^{2n}\right)\,, \quad   \underline{u} - \frac{\vert v\vert^2}2 \in L^\infty\left((0,T)\times\R^{2n}\right)\, .
\end{equation}
Then $\underline{u} \leq \overline{u}$ on $(0,T) \times \R^{2n}$.
\end{thm}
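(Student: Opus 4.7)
The plan is to adapt the classical doubling-of-variables technique from Hamilton-Jacobi viscosity theory to the non-local system \eqref{eq:limit}. Because \eqref{eq:limit} contains no velocity derivatives, the doubling will be performed only in the $(t,x)$ variables (the velocity entering as a parameter), and the comparison must be carried out in two stages: first on the partial minima, then on the full function. Throughout, I shall write $\underline m(t,x) = \min_w \underline u(t,x,w)$ and $\overline m(t,x) = \min_w \overline u(t,x,w) = \overline u(t,x,0)$, the last equality being a consequence of Definition~\ref{def:supersol}(ii).

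\textbf{Preliminary reduction.} By hypothesis~\eqref{eq:v2 plus borne}, the functions $\underline b := \underline u - |v|^2/2$ and $\overline b := \overline u - |v|^2/2$ are uniformly bounded; since $|v|^2/2\to+\infty$, the infima defining $\underline m$ and $\overline m$ are effectively taken over a compact set in $v$, which makes them well-defined semi-continuous functions of $(t,x)$. After the standard change of unknown $\underline u \mapsto \underline u - \eta t$ (which preserves the sub-solution structure for every $\eta > 0$), we may assume that any presumed violation $\sup(\underline u - \overline u) > 0$ is attained on a bounded set of interior points.

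\textbf{Step 1: comparison of the minima.} The second inequality in~\eqref{eq:S1} implies that $\underline m$ is a viscosity sub-solution of $\partial_t m \leq 0$, with initial datum $\underline m(0,\cdot)\leq \min_w (\u_0)^*$. Symmetrically, the second inequality in~\eqref{eq:S2 intro} together with the isolation property of Definition~\ref{def:supersol}(ii) implies that $\overline m$ is a viscosity super-solution of $\partial_t m \geq 0$ at every point where $\mathcal S(\overline u) = \{0\}$. A routine doubling of variables in $(t,x)$ only, using the uniform boundedness of the minima and the initial-data inequality, then yields $\underline m \leq \overline m$ on $[0,T)\times \R^n$.

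\textbf{Step 2: propagation to $u$.} Suppose for contradiction that $\sup(\underline u - \overline u) > 0$. Introduce
\[
\Psi_{\alpha,\delta}(t,s,x,y,v) = \underline u(t,x,v) - \overline u(s,y,v) - \tfrac{1}{2\alpha}\bigl(|t-s|^2 + |x-y|^2\bigr) - \eta t - \delta(|x|^2 + |v|^2),
\]
\emph{without} doubling in $v$. The bounds on $\underline b, \overline b$ and the quadratic penalization ensure a global maximum at some $(t_\alpha,s_\alpha,x_\alpha,y_\alpha,v_\alpha)$, with the usual convergence to a common interior point $(t^\star, x^\star, v^\star)$. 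The crucial consequence of Step~1 is the chain
\[
\overline u(t^\star, x^\star, v^\star) < \underline u(t^\star, x^\star, v^\star) \leq \underline m(t^\star,x^\star) + \tfrac{|v^\star|^2}{2} \leq \overline m(t^\star,x^\star) + \tfrac{|v^\star|^2}{2},
\]
where the middle inequality is the sub-solution constraint of Definition~\ref{def:subsol}(ii). This shows that the strict inequality required to activate the super-solution transport condition in Definition~\ref{def:supersol}(iii) holds in a neighbourhood of the maximum; subtracting this from the sub-solution transport inequality of Definition~\ref{def:subsol}(iii) and letting $\alpha,\delta\to 0$ contradicts $\eta>0$.

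\textbf{Main obstacle.} The delicate step is Step~1: rigorously exploiting the non-local minimum to establish a viscosity comparison on $\underline m, \overline m$ using a super-solution inequality that only fires on the "qualified" set $\{\mathcal S(\overline u) = \{0\}\}$. Handling the exceptional points where a second global minimum $v^\star\neq 0$ enters $\mathcal S(\overline u)$ requires the local isolation of Definition~\ref{def:supersol}(ii), possibly combined with the transport super-solution inequality read at $v^\star$ along its characteristic. This non-standard ingredient, specific to the coupled non-local structure of \eqref{eq:limit}, is what the dedicated definitions of sub- and super-solution were tailored for.
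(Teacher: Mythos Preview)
Your overall strategy---doubling only in $(t,x)$ and treating $v$ as a parameter---is the right one and matches the paper. The velocity penalization and the use of the sub-solution constraint to activate the transport super-solution inequality are also in the same spirit. However, the decoupling into ``Step~1 first, then Step~2'' contains a genuine gap, which you yourself flag as the \emph{Main obstacle} without resolving it.

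The problem is that Step~1 cannot be proven as a standalone statement. The super-solution inequality $\partial_t\overline m\geq 0$ is only available on the set $\{\mathcal S(\overline u)=\{0\}\}$, and at a putative interior maximum of $\underline m-\overline m$ you have no a~priori control on $\mathcal S(\overline u)$. The isolation condition in Definition~\ref{def:supersol}(ii) does not help by itself: it only prevents accumulation of minima near $0$, not the presence of an isolated second minimum $v^\star\neq 0$. And at such a point, the only information on $\overline u$ at $v^\star$ is the transport inequality---which involves $\underline u$ at $v^\star$, i.e.\ the full functions, not just the minima. So Step~1 is entangled with Step~2.

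The paper's resolution is to run the two comparisons \emph{simultaneously} rather than sequentially. Introduce the two penalized suprema
\[
\omega=\sup_{t,x}\Bigl(\underline m-\overline m-\tfrac\delta2|x|^2-\tfrac\alpha{T-t}\Bigr),\qquad
\Omega=\sup_{t,x,v}\Bigl(\underline u-\overline u-\tfrac\delta2|x|^2-\tfrac\alpha{T-t}-(|v|^2-R^2)_+\Bigr),
\]
and observe (by evaluating the second supremum at a minimizing velocity of $\overline u$) that $\omega\leq\Omega$. Now split:
\begin{itemize}
\item If $\omega<\Omega$, the strict gap forces the super-solution constraint at the maximizer of the second quantity to be \emph{strictly} unsaturated (this is where the inequality $\omega<\Omega$ is used quantitatively, via the sub-solution constraint on $\underline u$), and one concludes by the transport inequalities exactly as in your Step~2---without ever needing Step~1.
\item If $\omega=\Omega$, examine the maximizer $(\hat t_0,\hat x_0)$ of the first quantity. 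If $\mathcal S(\overline u)(\hat t_0,\hat x_0)=\{0\}$, use the minimum equations as you sketched. If instead some $v_0\neq 0$ lies in $\mathcal S(\overline u)(\hat t_0,\hat x_0)$, then the equality case $\omega=\Omega$ forces $(\hat t_0,\hat x_0,v_0)$ to also maximize the second quantity \emph{and} $v_0$ to be a minimum of $\underline u(\hat t_0,\hat x_0,\cdot)$; since $v_0\neq 0$ the super-solution constraint is automatically unsaturated there, and one closes via transport.
\end{itemize}
The point is that the case $\mathcal S(\overline u)\neq\{0\}$ is handled not by the minimum equation at all, but by falling back on the transport equation at the extra minimum $v_0$---and this requires knowing that $(\hat t_0,\hat x_0,v_0)$ is a maximizer of the \emph{full} difference $\underline u-\overline u$, which is exactly what the coupling $\omega=\Omega$ provides.
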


This result is extended for  solutions with at most quadratic  growth  in Appendix \ref{sec:unbded}. This growth condition is compatible with the kernel of the variational representation formula, as we shall see below.

In Section  \ref{sec:Conv}, we prove the convergence of the family $\left(u^\eps\right)$  towards the unique viscosity solution of \eqref{eq:limit} as $\eps\to 0$.

\begin{thm}[Convergence]\label{HJlimit}
Assume that $\u_0$  is continuous and satisfies the following property:
\begin{equation}
\label{eq:initial condition 1}
\u_0 -  \frac{\vert v\vert^2}{2} \in L^{\infty}(\R^{2n}), 
\end{equation}
Let $\u^\eps$ be the solution of \eqref{WKB1}, with the initial data $\u^\eps(0,\cdot) = u_0$. Then,  $\u^\eps$ converges locally uniformly towards $u$  as $\eps\to 0$, where $u$ is the unique viscosity solution of \eqref{eq:limit}
with initial data $\min\left(u_0,\minp u_{0} + |v|^{2}/2\right)$.
\end{thm}



Finally, we also establish the variational formulation \eqref{eq:kinHopf Lax} of the viscosity solution. 
This enables connecting the PDE (Eulerian) point of view and the trajectory (Lagrangian) point of view. The standard connection goes through the convex duality $H\leftrightarrow L$, where $L$ is the Legendre tranform of the Hamiltonian (and vice-versa), and minimizing trajectories as usually smooth curves. However, in the present study, minimizing trajectories are piecewise linear in space (and piecewise constant in velocity, accordingly). This striking behavior is connected to the concept of {\em big events} in large deviations of subexponential processes, see discussion above and \cite{jara_mallein_note}. This motivates the following definition.

\begin{defi}[Notations for the minimizing curves]
Let $\Sigma_s^t$ be the space of piecewise constant, {\em c\'a{}dl\'a{}g} functions defined over the time interval $(s,t]$ taking values in $\R^n$. For any $y\in \R^n$ and $\sigma \in \Sigma_s^t$, let $(t_i)_{1\leq i\leq N}$ be the times of discontinuity of $\sigma$ in $(s,t]$, such that 
\begin{equation*}
\sigma = \sigma_0 {\bf 1}_{(s,t_1)} +  \sum_{i =1}^{N-1} \sigma_i {\bf 1}_{[t_{i},t_{i+1})} + \sigma_{N} {\bf 1}_{[t_{N},t]} \,. 
\end{equation*}
Then, the piecewise linear curve $\gamma_\sigma$ is defined naturally as 
\begin{equation*}
\gamma_\sigma(\tau) = y + \int_s^\tau \sigma(\tau') d\tau'\, .
\end{equation*}
\end{defi}

\begin{defi}[The action of a piecewise linear curve]
The action of $\sigma$ on  $(s,t]$ is defined as follows:
\begin{equation}\label{eq:A intro}
\mA_s^t[\sigma] = \frac12 \sum_{i=1}^N |\sigma_i|^2 + \sum_{i=0}^{N} (t_{i+1} - t_{i}) {\bf 1}_{ \sigma_i\neq 0}\, ,  
\end{equation}
with the convention $t_0 = s$ and $t_{N+1} = t$.
\label{def:action}
\end{defi}
%

The action can be expressed in an informal way: each discontinuity after the initial time $s$ (not included), with a posterior non-zero velocity $\sigma_i$,  contributes to a punctuated cost $|\sigma_i|^2/2$ and a running cost of 1 per unit of time. Zero velocities come at no additional cost. Notice that the case of a constant velocity $\sigma\equiv \sigma_0$ does not involve any punctuated cost, only the running cost if $\sigma_0\neq 0$. Adding a punctuated cost would contradict the additivity of the action of curves on sub-intervals, simply because the punctuated cost would be accounted for twice on two sub-intervals. In fact, this apparently missing punctuated cost is accounted for in the initial data, which involves $|v|^2/2$, see Theorem \ref{HJlimit}.

The action can be interpreted in the following way: in the large scale regime $\left(\frac{ t}{\eps},\frac{ x}{\eps^{3/2}},\frac{ v}{\eps^{1/2}} \right)$, the non-zero velocities are indeed huge. It is unlikely to draw one such at a reorientation event, and the action precisely measures how unlikely it is: this is the first contribution in \eqref{eq:A intro} where one can recognize the rate function of the Gaussian distribution $-\eps \log M_\eps(v)$ (up to a negligible constant). Then, in order to make a significant move in the appropriate spatial scale $\mathcal{O}(\eps^{-3/2})$, this huge (non-zero) velocity of order $\mathcal{O}(\eps^{-1/2})$ should be kept for a time of order $\mathcal{O}(\eps^{-1})$. This is the second contribution   in \eqref{eq:A intro} where one can recognize the rate function of an exponential random time. However, the latter cost is restricted to non-zero velocities. In fact, zero velocities (meaning $o(\eps^{-1/2})$ in the original variables) are constantly drawn from the bulk of the Gaussian velocity distribution during repeated reorientation events, and it does not make sense to count for their cumulated time of persistence. Only the non-zero velocities matter. We also refer to \cite{jara_mallein_note} for a complementary viewpoint on this topic.

The following property is established in Section \ref{sec:variational}.

\begin{thm}[Kinetic Hopf-Lax formula]
\label{th:hopf lax intro}
Let $u_0$ be a continuous function verifying \eqref{eq:initial condition 1}. Then, the following representation formula
\begin{equation}\label{eq:kin HL th}
U(t,x,v) = \inf_{\footnotesize \begin{array}{c}
\{(y,\sigma)\in \R^n\times \Sigma_0^t:\\ \gamma_\sigma(t) = x, \sigma_N = v\}
\end{array}} \left \{  \mA_0^t[\sigma] + u_0(y,\sigma_0) \right  \} 
\end{equation}  
is the viscosity solution of \eqref{eq:limit}  with initial data $\min\left(u_0,\minp u_{0} + |v|^{2}/2\right)$. 
\end{thm}

\begin{rmq}
The final velocity $\sigma_N = v$ is assigned in \eqref{eq:kin HL th}. Hence, the last contribution in the punctuated costs $|\sigma_N|^2/2$ \eqref{eq:A intro} could be put outside the infimum in \eqref{eq:kin HL th} except if $N=0$, that is, the velocity is constant $\sigma\equiv \sigma_0$.
\end{rmq}

\subsection{Discussion}
\label{sec:intro bdd vel}

\subsubsection{Qualitative behaviour (in the long term)}

We provide in Section \ref{sec:kernel} several expressions for the minimal value of the action $\mA_0^t[\sigma]$ with prescribed endpoints $(y,\sigma_0)$ and $(x,\sigma_N)$ (we can set $y = 0$ without loss of generality by translation invariance). In fact, the minimal path contains at most one intermediate non trivial velocity, as in Proposition \ref{prop:sigma1}, so that $N \leq 2$. Consequently, the infimum in \eqref{eq:kin HL th} is attained at a minimizing trajectory.
This reduction of complexity enables computing  $\mmA_0^t(x,\sigma_0,\sigma_2) = \min \mA_0^t[\sigma]$ in the one-dimensional case $n=1$, see Proposition \ref{prop:1D}:
\begin{equation}\label{eq:kernel1D}
\mmA_0^t(x,\sigma_0,\sigma_2) = \begin{cases}
t{\bf 1}_{\sigma_0\neq 0} & \displaystyle \text{if}\; \sigma_0 = \sigma_2 = \frac xt,
\medskip\\
\displaystyle \frac{|\sigma_2|^2}{2}  + \min(L(x,\sigma_0),L(x,\sigma_2)) & \text{otherwise}.
\end{cases}
\end{equation}
The formula for $L(x,\sigma)$ is expressed below \eqref{eq:candidates all}, and its values  are depicted in Figure \ref{fig:fund}. Specializing $\sigma_0 = \sigma_2 = 0$, we find that the action between position $x$ and the origin, with both initial and final velocities at rest is:
\begin{equation}
\mmA_0^t(x,0,0) = 
\begin{cases}
\dfrac32 |x|^{2/3}  & \text{ if } |x|\leq t^{3/2},
\medskip\\
\dfrac{\vert x \vert^2}{2 t^2} + t  & \text{ if } |x|\geq t^{3/2}.
\end{cases}
\label{eq:candidates intro}
\end{equation}

\begin{figure}
\begin{center}
\includegraphics[width = .90\linewidth]{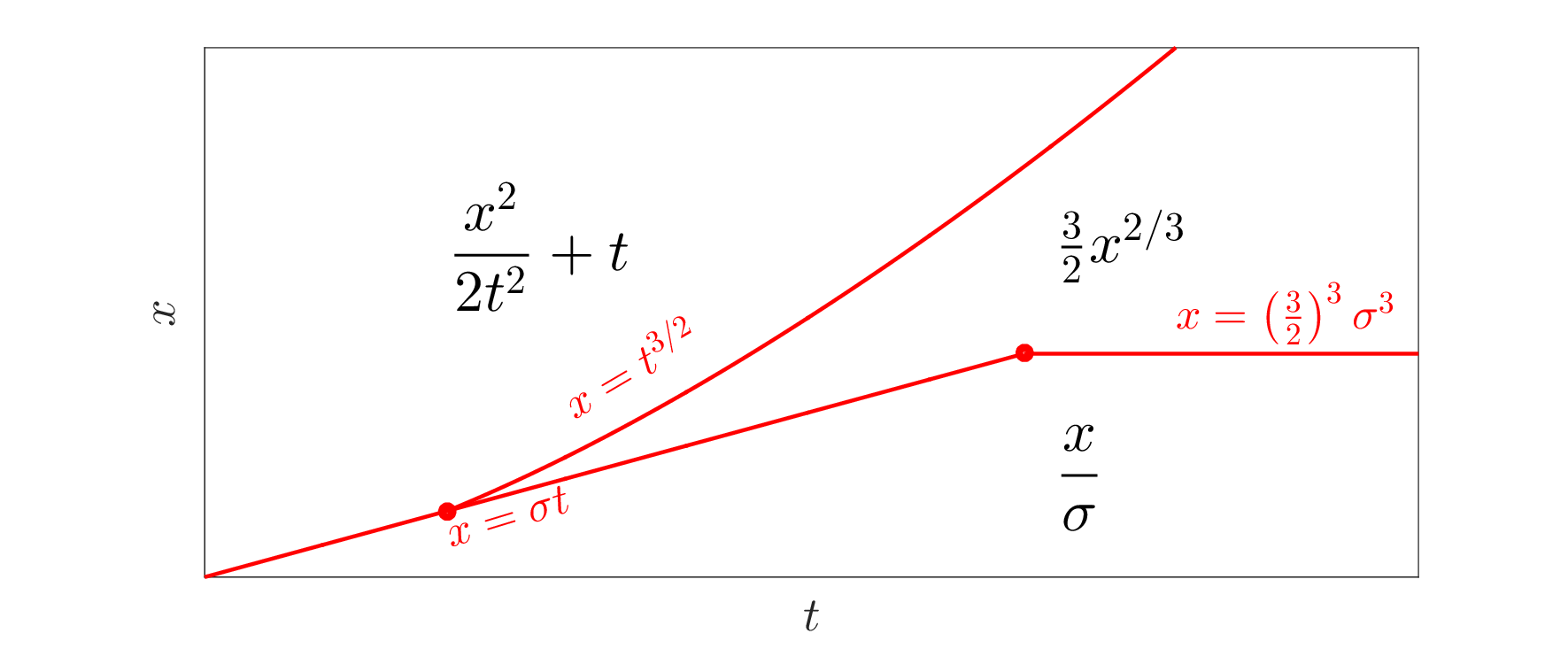}
\end{center}
\caption{Values of the contribution $L(x,\sigma)$ to the kernel of the variational formulation in the one-dimensional case \eqref{eq:kernel1D}.}\label{fig:fund}
\end{figure}

This spatial behaviour is illustrated in 
Figure \ref{fig:min_introA} in comparison with the kernel $\frac{x^2}{2t}$ associated with the Hamilton-Jacobi equation \eqref{eq:HJ-heat} coming from the heat equation with vanishing viscosity (Figure \ref{fig:min_introB}). In the latter case, the kernel is a family of parabola converging to zero as $t\to \infty$, uniformly on compact intervals. It means that, despite the rarity of finding a Brownian particle far from its origin, the small probability is not uniformly exponentially small. Contrarily, the kernel \eqref{eq:candidates intro} converges towards its envelope $\frac32 |x|^{2/3}$, which is obviously uniformly positive on closed intervals that do not contain the origin. Alternatively speaking, the probability of finding a particle far from its origin remains uniformly exponentially small in the velocity-jump process under study.

\begin{figure}
\begin{center}
\begin{subfigure}{0.45\linewidth}
\includegraphics[width = \linewidth]{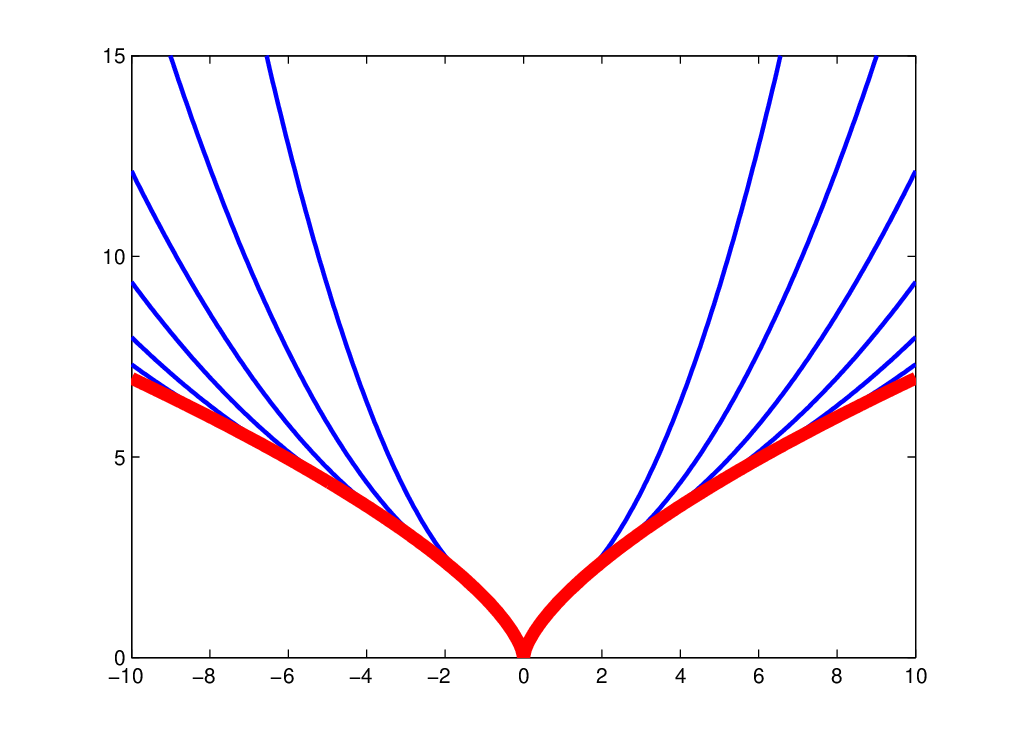} \; 
\caption{Plot of the kernel \eqref{eq:candidates intro} with initial and final velocities at rest. The red curve is the envelope as $t\to +\infty$.}
\label{fig:min_introA}
\end{subfigure}\qquad
\begin{subfigure}{0.45\linewidth}
\includegraphics[width = \linewidth]{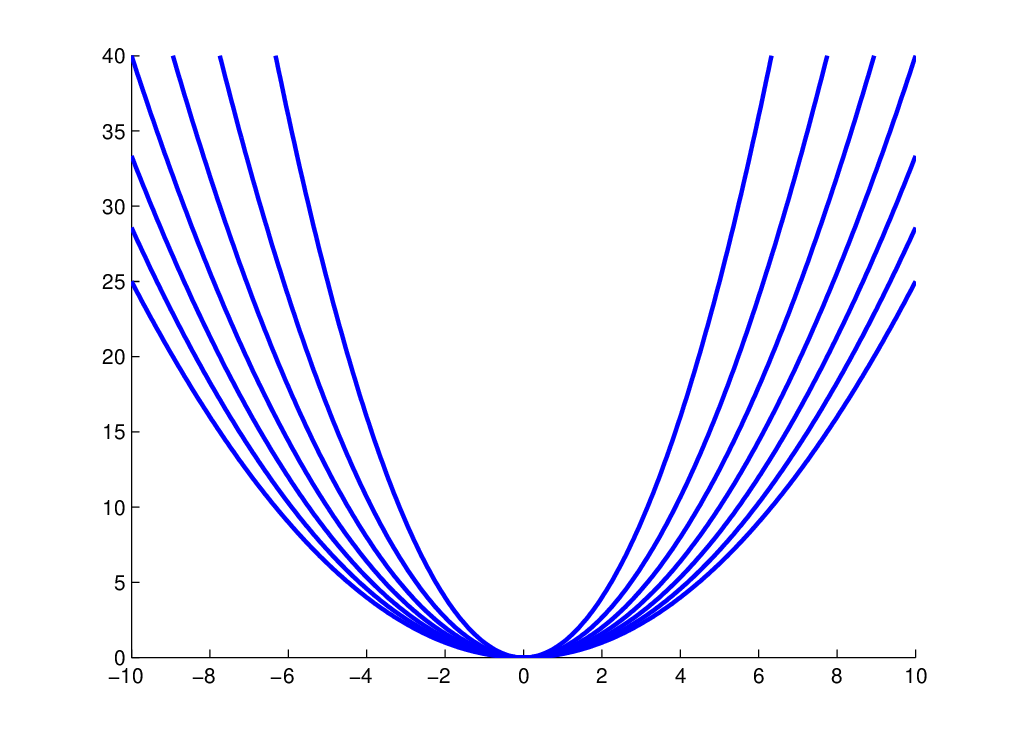}
\caption{Plot of the kernel $\frac{x^2}{2t}$ associated with the heat equation with vanishing viscosity.}
\label{fig:min_introB}
\end{subfigure}
\end{center}
\label{fig:min_intro}
\caption{Comparison of different qualitative behaviours between the velocity-jump process and the Brownian motion.}
\end{figure}

\subsubsection{Comparison with the case of bounded velocities}
\label{sec:intro compact}

Suppose that $M(v)$ is a compactly supported probability distribution, and denote $V = \supp M$.  It was shown in \cite{bouin_kinetic_2012}  that the appropriate scaling regime is different. Indeed, the velocity rescaling $\frac v{\eps^{1/2}}$ is clearly not possible in the case of bounded velocities. The scaling regime is rather $\left(\frac{ t}{\eps},\frac{ x}{\eps}\right)$, and $v$ unchanged. 

It was shown in \cite{bouin_kinetic_2012,bouin_hamilton-jacobi_2015, caillerie_large_2017} that, under such scaling regime, 
$ u^\eps (t,x,v)= -\eps \log f^\eps(t,x,v)$ converges to a function $u(t,x)$ which does not depend on $v$, and which is the viscosity solution of a standard Hamilton-Jacobi equation $\partial_t u + H(\nabla_x u) = 0$, where the Hamiltonian function $H(p)$ is defined implicitly as 
\begin{equation}\label{eq:HJbdd}
 \int_{\R^n} \frac{M(v)}{1 + H(p) - p\cdot v}\, dv = 1\, ,
 \end{equation}
 provided that this equation has an admissible solution satisfying $H(p) \leq \max_{v\in V} (p\cdot v) - 1$, and $H(p) = \max_{v\in V} (p\cdot v) - 1$ otherwise (see Caillerie \cite{caillerie_large_2017} for more details). 

The averaging process occurring in the case of bounded velocities is 
similar  to large deviation principles for slow-fast systems as in \cite{faggionato_averaging_2008,
perthame_asymmetric_2009,
kifer_large_2009,
bressloff_path_2014,
bressloff_hamiltonian_2017}, and references therein. In this case, the role of the fast variable is played by velocity, whereas the space variable is the slow one. 

Our methodology follows the Hamiltonian viewpoint. We refer to \cite{faggionato_averaging_2008,
kifer_large_2009,
bressloff_path_2014} for the dual viewpoint focusing on the trajectories of the underlying PDMP,  and to \cite{jara_mallein_note} for the case of unbounded velocities, as discussed above. We present briefly the results of the former works for the sake of comparison. Let $\Sigma$ be the finite set of possible velocities (note that their analysis is restricted to a finite number of velocities). A curve is expressed in terms of a time-varying measure $\nu = (\nu_\sigma)$ on $\Sigma$ such that $\sum_{\sigma\in \Sigma}  \nu_\sigma (t) = 1$ at any time. Then, the curve is constructed by its averaged velocity $ v= \sum \sigma \nu_\sigma$ in the following way:
\begin{equation*}
\gamma(t) = y + \int_0^t \sum_{\sigma\in \Sigma} \sigma \nu_\sigma(s)\, ds.
\end{equation*}
The action of a curve $\gamma$ is defined as $\mA_0^t[\nu] = \int_0^t L(\nu(s))\, ds$, where $L$ is given by the solution of a cell problem \cite{faggionato_averaging_2008}:
\begin{equation*}
L(\nu) =  \sup_{Z\in (\R_+^*)^\Sigma}  \left ( \sum_{(\sigma,\sigma')\in \Sigma^2} \nu_\sigma M(\sigma') \left ( 1 - \dfrac{Z_{\sigma'}}{Z_\sigma}\right )\right ).
\end{equation*}
Note that here we present the simpler case where the rate of velocity change does not depend on the velocity prior to the jump, and is space homogeneous, but more generality can be handled in \cite{faggionato_averaging_2008}. Straightforward computations yield
\begin{equation*}
L(\nu) = \left (\sum_{\sigma\in \Sigma} \nu_\sigma\right )
\left (\sum_{\sigma\in \Sigma} M(\sigma)\right ) - \left (\sum_{\sigma\in \Sigma} \left( \nu_\sigma M(\sigma) \right )^{1/2}\right )^2.
\end{equation*}
It coincides with the convex conjugate of $H$ \eqref{eq:HJbdd} by the Legendre-Fenchel transformation $H(p) = \sup\left ( p\cdot(\sum \sigma \nu_\sigma) - L(\nu)\right )$ where the supremum is taken over the set of probability measures $\nu_\sigma$ on $\Sigma$ (details omitted).  

The averaging phenomenon which is central in the case of bounded velocities \cite{faggionato_averaging_2008,bouin_kinetic_2012,bouin_hamilton-jacobi_2015, caillerie_large_2017} does not occur in the case of unbounded velocities. One immediate consequence is that the rate function $u$ depends on the velocity variable $v$. More profound consequences are the seemingly new structure of the non-local Hamilton-Jacobi problem \eqref{eq:limit}, and the singular shape of the action supported on piecewise linear curves \eqref{eq:A intro}. Alternatively speaking, the nature of the PDMP persists in the regime of large deviations.

\subsection{Accelerated fronts in reaction-transport equations} \label{sec:intro accel}

As an application of our methodology, we investigate quantitatively front acceleration in reaction-transport equations in Section \ref{sec:acc}. We focus on \eqref{eq:main} with an additional monostable reaction term:
\begin{equation}\label{eq:kinreac}
\partial_t f(t,x,v)  + v \cdot \nabla_x f(t,x,v) =\left( M(v) \rho(t,x)  - f(t,x,v)  \right) + r \rho(t,x)  \left( M(v) - f(t,x,v)  \right)\,.
\end{equation}
This models a population of individuals that change velocity at rate one, pick up a random new velocity following a Gaussian distribution, and divide at rate $r>0$. Moreover, new particles pick up their initial velocity from the same Gaussian distribution. Saturation occurs when the spatial density $\rho(t,x) = \int_{\R^n} f(t,x,v') dv'$ gets too large, so that the space homogeneous problem admits the pair of equilibria zero (trivial) and $M(v)$.  

This  model has been studied in \cite{hadeler_reaction_1999,
schwetlick_travelling_2000,
cuesta_traveling_2012,
bouin_propagation_2015} in the case of bounded velocities, and further in \cite{bouin_propagation_2015} in the case of possible unbounded velocities. 
Equation \eqref{eq:kinreac} can be viewed as a kinetic version of the celebrated Fisher-KPP equation, 
\begin{equation}\label{eq:FKPP}
\partial_t \rho(t,x) - \Delta \rho(t,x) = r \rho(t,x) (1 - \rho(t,x))\, . 
\end{equation}
There exists a true link via the diffusion limit when $t$ and $x$ are scaled in the parabolic regime (provided that the rate of division $r$ is scaled too), see  \cite{cuesta_traveling_2012}. However, we point out that the parabolic scaling is not compatible with front tracking, so we need to follow a direct approach as in \cite{bouin_propagation_2015}. 

In the case of bounded velocities,  there exist traveling waves with constant speed of propagation \cite{schwetlick_travelling_2000,
cuesta_traveling_2012,
bouin_propagation_2015}. Moreover, any solution to the Cauchy problem with sufficiently decaying initial data spreads with the minimal speed, just as for Fisher-KPP \cite{fisher_wave_1937,
kolmogorov_etude_1937,
hadeler_travelling_1975,
aronson_multidimensional_1978}. 

In the case of unbounded velocities, and more precisely for a Gaussian velocity distribution, it was established in \cite{bouin_propagation_2015} that solutions to \eqref{eq:kinreac}, in the one-dimensional case $n=1$, behave in the long-time asymptotics as accelerating fronts due to the (rare) occurrence of high velocities that send particles far from the bulk. Furthermore, the location of the front is of the order of $t^{3/2}$, in accordance with the scaling limit of the linear problem performed in  \eqref{eq:main}. The  location of the front $X(t)$ (such that $\rho(t,X(t)) = 1/2)$ was  determined in \cite{bouin_propagation_2015} via the construction of sub- and super-solutions, with some room in between. More precisely, it was estimated that
\begin{equation} \label{eq:bounds expansion}
\left( \frac{r}{r+2} \right)^{3/2}  \leq\dfrac{X(t)}{t^{3/2}} \leq \sqrt{2r}  \, , \end{equation}
in a weak sense (see \cite[Theorem 1.11]{bouin_propagation_2015} for details). 

Front acceleration has been reported in a number of works in the past decade.   Cabr\'e and Roquejoffre studied the Fisher-KPP equation \eqref{eq:FKPP} where the diffusion operator is replaced with a fractional diffusion operator \cite{cabre_propagation_2009,
cabre_influence_2013}, 
\begin{equation}\label{eq:cabre}  \partial_t \rho(t,x) + (-\Delta )^{\alpha}\rho(t,x) = r \rho(t,x) (1 - \rho(t,x))\, , \end{equation}
for some exponent $\alpha\in (0,1)$.
They described quantitatively the acceleration of the front, which occurs at exponential rate, namely $X(t) = \exp(rt/(n+2\alpha))$ in a weak sense. This seminal work was continued in \cite{coulon_transition_2012,
roquejoffre_gradient_2017}. 

Garnier has investigated integro-differential equations, where the spreading operator is given by the convolution with a fat-tailed kernel \cite{garnier_accelerating_2011}, 
\begin{equation}\label{eq:garnier}  \partial_t \rho(t,x) + \left( - \int_{\R} J(x-y) \rho(t,y)\, dy + \rho(t,x)\right) = r \rho(t,x) (1 - \rho(t,x))\, . \end{equation}
Here, fat-tailed means that the kernel $J$ decays slower than exponentially. There, the level lines of the solution spread super linearly, depending on the decay of the convolution kernel $J$.

Recently, spreading in the so-called cane toads equation has been  studied intensively for unravelling dispersal evolution at the edge of an invasion front,   
\begin{equation}\label{eq:canetoads}  
\partial_t f(t,x,\theta) - \theta \partial^2_{x} f(t,x,\theta) - \partial^2_{\theta} f(t,x,\theta) = r f(t,x,\theta) (1 - \rho(t,x))\, , \quad \rho(t,x) = \int f(t,x,\theta')\, d\theta'\, , 
\end{equation}
When the variable $\theta$ is unbounded, accelerated propagation was conjectured in \cite{bouin_invasion_2012}, then was established independently by Berestycki, Mouhot and Raoul \cite{berestycki_existence_2015}, and by the first author, Henderson and Ryzhik \cite{bouin_super-linear_2017}. There is a formal analogy between  \eqref{eq:canetoads}, and our problem \eqref{eq:kinreac}. Indeed, acceleration also happens due to the influence of a microscopic variable $\theta$, which plays a similar role as the velocity variable in this paper. This is another example of a nonlinear acceleration phenomena appearing in a structured model.

Here, we aim to apply the powerful methodology of the approximation of geometric optics for reaction-diffusion equations \cite{freidlin_functional_1985,
freidlin_geometric_1986, evans_pde_1989, barles_wavefront_1990}. Recently, this method has been applied successfully to the case of the fractional reaction-diffusion equation \eqref{eq:cabre} by M\'el\'eard and Mirrahimi \cite{meleard_singular_2015}, to the integro-differential equation \eqref{eq:garnier} by the first author, Garnier, Henderson and Patout \cite{bouin_thin_2018}, to the cane toads equation \eqref{eq:canetoads} by the second author, Henderson, Mirrahimi and Turanova \cite{calvez_non-local_2022}, and also to the reaction-transport equation \eqref{eq:kinreac} with bounded velocities \cite{bouin_hamilton-jacobi_2015,
bouin_spreading_2019}. 
It amounts to perform the right scaling of variables, here $\left(\frac{ t}{\eps},\frac{ x}{\eps^{3/2}},\frac{ v}{\eps^{1/2}} \right)$, and to derive an equation for $u^\eps = -\eps \log f^\eps$ in order to track the level sets of the density, and to localize the place where the population is emerging.

\begin{figure}
\begin{center}
\includegraphics[width = .90\linewidth]{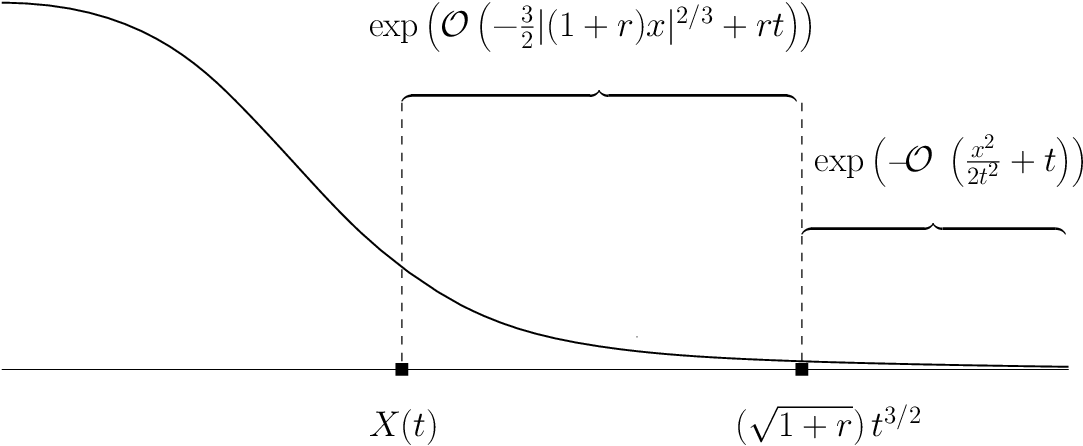}
\end{center}
\caption{Loose description of the dynamics of front acceleration based on the computations of Section \ref{sec:acc}.}\label{fig:front dynamics}
\end{figure}

We restrict to  the one-dimensional case $n=1$ for simplicity.
After identification of the limit problem, which is an obstacle version of \eqref{eq:limit} due to the saturation term, we are able to refine the estimate \eqref{eq:bounds expansion} to get that the front  is located around
\begin{equation*}
X(t) = \left( \frac{\left( (2/3) r \right)^{3/2}}{1+r} \right)  t^{{3}/{2} }\, , 
\end{equation*}
in a weak sense. To decipher the dynamics of acceleration, we show in Figure \ref{fig:front dynamics} a cartoon of the tails of the spatial density resulting from the approximation of geometric optics performed in Section \ref{sec:acc}. There is a first zone far ahead where the density is uniformly exponentially small, independently of $r$. It is followed by a region where the density is approximately of separable variables: a sub-exponential anomalous spatial profile multiplied by a growing exponential. It is where the front is actually emerging at $X(t)$ (roughly).

\section{The  comparison principle}\label{sec:Comp}

This section is devoted to the proof of Theorem \ref{theo:comp}. We  perform a classical  doubling of variables argument in $(t,x)$. However, much attention has to be paid to the velocity variable. This is the main concern of this proof. In particular, the velocity variable is not doubled, which is consistent with the fact that there is no gradient with respect to velocity in the limit system \eqref{eq:limit}. 


%
We define some auxiliary functions as follows,
\begin{equation*}
\underline{b}(t,x,v) =  \underline{u}(t,x,v) - \frac{\vert v \vert^2}{2}, \qquad \overline{b}(t,x,v) =  \overline{u}(t,x,v) - \frac{\vert v \vert^2}{2}.
\end{equation*}
They are bounded by assumption. We denote $B = \max (\|\underline{b}\|_\infty,\|\overline{b}\|_\infty)$. As a consequence, there exists $R_0$ such that for all $(t,x)$, $\aminp \overline{u} (t,x) \subset \mB(0,R_0)$ the ball of radius $R_0$ around the origin.

We introduce the parameter $\kappa \in (0,1)$ for the sake of comparing $\kappa \underline{b}$ and $\overline{b}$.
We also introduce $\eps>0$, $\alpha>0$, $R>R_0$, and some additional $\delta>0$ to be suitably chosen below, depending on $\kappa$, $B$ and $R$.

To perform a doubling of variables argument, we define another auxiliary function with twice the number of variables, except for the velocity, as follows
\begin{multline}\label{eq:chiapp}
\wtildechi(t,x,s,y,v) = \kappa\underline{b}(t,x,v) - \overline{b}(s,y,v) -  \frac\delta2 \left( |x|^2 + \vert y \vert^2 \right) - \alpha\left( \dfrac1{T- t} + \dfrac1{T- s}\right) 
\\  - \frac1{2\eps} \left(|t -s|^2 + |x-y|^2\right) - \left(  |v|^2 - R^2\right)_+ \, .
\end{multline}
Let $(\tt,\tx,\ts,\ty,\tv)$ which realizes the maximum of $\wtildechi$ (we omit the dependency with respect to the parameters for the sake of conciseness). It exists by upper semi-continuity and confinement. In the same spirit, we introduce the following auxiliary function
\begin{multline*}
\overlinechi(t,x,s,y) = \kappa \left (\minp \underline{u}(t,x)\right ) - \minp \overline{u}(s,y) -  \frac\delta2 \left( |x|^2 + \vert y \vert^2 \right) -  \alpha \left( \dfrac1{T- t} + \dfrac1{T- s}\right)
\\   - \frac1{2\eps} \left(|t -s|^2 + |x-y|^2\right)\, .
\end{multline*}
Finally, we define the  maximum values:
\begin{equation*} 
\omega = \max_{((0,T) \times \R^n)^2}\overlinechi \,, \quad \Omega = \max_{((0,T) \times \R^n)^2 \times \R^n} \wtildechi \,.
\end{equation*}

\begin{lem}\label{lem:Oomega}
We have $\omega \leq \Omega$.
\end{lem}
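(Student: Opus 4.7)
The proof proposal rests on a single clean observation: both maxima are attained at velocity $v=0$, because the sub- and super-solution properties force $v=0$ to realize the velocity minimum.

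First I would verify that $\widehat\chi$ and $\widetilde\chi$ actually attain their maxima. Since $\underline{b},\overline{b}\in L^\infty$, the differences $\underline{u}-\overline{u}=\underline{b}-\overline{b}$ and $\underline{m}-\overline{m}$ are bounded on $[0,T)\times\R^{2n}$ and $[0,T)\times\R^n$ respectively. The coercivity of the penalties $-\tfrac{\delta}{2}|x|^2$, $-\tfrac{\alpha}{T-t}$ and (for $\widetilde\chi$) $-(|v|^2-R^2)_+$ forces the suprema to be attained away from $|x|=\infty$, $t=T^-$ and $|v|=\infty$. Combined with the fact that $\underline{u}$ is upper semi-continuous and $\overline{u}$ is lower semi-continuous (hence $\underline{u}-\overline{u}$ is u.s.c.), and similarly for $\underline{m}-\overline{m}$, this gives existence of maximizers.

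The core of the argument is the identity $\widehat\chi(t,x)=\widetilde\chi(t,x,0)$ on $(0,T)\times\R^n$. Indeed, at $v=0$ the condition (ii) of Definition~\ref{def:supersol} gives $\overline{m}(t,x)=\overline{u}(t,x,0)$ directly. For the sub-solution, evaluating the constraint (ii) of Definition~\ref{def:subsol} at $v=0$ yields $\underline{u}(t,x,0)-\underline{m}(t,x)\leq 0$, while the reverse inequality is tautological; hence $\underline{m}(t,x)=\underline{u}(t,x,0)$ as well. Because $R>R_0>0$, the penalty $(|0|^2-R^2)_+$ vanishes, so subtracting the common penalties $\tfrac{\delta}{2}|x|^2+\tfrac{\alpha}{T-t}$ gives precisely $\widehat\chi(t,x)=\widetilde\chi(t,x,0)$. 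Taking the maximum of both sides over $(t,x)\in(0,T)\times\R^n$ and bounding the right-hand side by $\Omega$ concludes.

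The only subtlety I anticipate concerns the boundary layer $t=0$: the constraints (ii) in the two definitions are stated on $(0,T)$, so if the maximizer of $\widehat\chi$ happens to lie on $\{t=0\}$ the identity above is not immediate. I would handle this by taking a maximizing sequence $(t_k,x_k)$ with $t_k>0$ (possible thanks to the penalty $-\alpha/(T-t)$ being continuous at $t=0$, so values near $t=0^+$ are still competitive) and passing to the limit using the upper semi-continuity of $\widehat\chi$ and the lower semi-continuity of $\widetilde\chi(\cdot,\cdot,0)$. Alternatively, if needed, one can invoke the initial conditions $\underline{u}(0,\cdot)\leq(u_0)^*$ and $\overline{u}(0,\cdot)\geq(u_0)_*$ in (i) of both definitions to directly compare the two functions at $t=0$. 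The rest of the argument is routine: $\omega=\widehat\chi(t^*,x^*)=\widetilde\chi(t^*,x^*,0)\leq\Omega$.
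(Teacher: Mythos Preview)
Your core observation is correct and elegant: for $t>0$, the sub-solution constraint (Definition~\ref{def:subsol}(ii) at $v=0$) and the super-solution condition (Definition~\ref{def:supersol}(ii)) together force $\underline{u}(t,x,0)=\underline{m}(t,x)$ and $\overline{u}(t,x,0)=\overline{m}(t,x)$, yielding the exact identity $\overlinechi(t,x)=\wtildechi(t,x,0)$. This is a genuinely different route from the paper. The paper instead picks an arbitrary $v^*\in\mathcal{S}(\overline{u})(t^*,x^*)$, uses only the trivial bound $\underline{u}(t^*,x^*,v^*)\geq\underline{m}(t^*,x^*)$ together with $\overline{u}(t^*,x^*,v^*)=\overline{m}(t^*,x^*)$, and invokes Lemma~\ref{lem:minconf} to make the velocity penalty vanish at $v^*$. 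Your argument buys a sharp equality and does not need Lemma~\ref{lem:minconf}; the paper's buys uniformity over $t\in[0,T)$.

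That uniformity matters, and your handling of $t=0$ is the one weak point. A maximizing sequence with $t_k>0$ need not exist: $\overlinechi$ is only upper semi-continuous, so the maximum over $[0,T)$ can strictly exceed $\sup_{t>0,x}\overlinechi(t,x)$, and your claim that ``values near $t=0^+$ are still competitive'' fails in general. Moreover $\wtildechi(\cdot,\cdot,0)$ is upper, not lower, semi-continuous, so the limit passage you propose goes the wrong way. The initial-condition alternative is too vague as stated. The cleanest patch at $t^*=0$ is precisely the paper's move: choose $v^*\in\mathcal{S}(\overline{u})(0,x^*)\subset\overline{B_R(0)}$ (Lemma~\ref{lem:minconf} holds on all of $[0,T)$) and conclude $\overlinechi(0,x^*)\leq\wtildechi(0,x^*,v^*)\leq\Omega$ from $\underline{u}(0,x^*,v^*)\geq\underline{m}(0,x^*)$ alone.
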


\begin{proof}
Let $(\bt,\bs,\bx,\by)$ 
be a maximum point of  $\overlinechi$. Let $\hat v \in \aminp \overline{u} (\bs,\by) \subset \mB(0,R)$. The following sequence of inequalities holds true,
\begin{align*}
\Omega & \geq  \wtildechi(\bt,\bs,\bx,\by,\hat v) \\
&=
\ds \kappa\underline{b}(\bt,\bx,\hat v) - \overline{b}(\bs,\by,\hat v)
\\
&\qquad \qquad \qquad \qquad-  \frac\delta2 \left( |\bx|^2 + \vert \by \vert^2 \right) - \alpha\left( \dfrac1{T- \bt} + \dfrac1{T- \bs}\right) - \frac1{2\eps} \left(|\bt -\bs|^2 + |\bx-\by|^2\right)\, ,\\
&=
\ds \kappa\underline{u}(\bt,\bx,\hat v) - \overline{u}(\bs,\by,\hat v) + (1-\kappa)\frac{\vert \hat v \vert^2}{2} 
\\
&\qquad \qquad \qquad \qquad-  \frac\delta2 \left( |\bx|^2 + \vert \by \vert^2 \right) - \alpha\left( \dfrac1{T- \bt} + \dfrac1{T- \bs}\right) - \frac1{2\eps} \left(|\bt -\bs|^2 + |\bx-\by|^2\right)\, ,\\
& \geq  \ds \kappa \minp \underline{u}(\bt,\bx) - \minp \overline{u}(\bs,\by)
\\
&\qquad \qquad \qquad \qquad -  \frac\delta2 \left( |\bx|^2 + \vert \by \vert^2 \right) - \alpha\left( \dfrac1{T- \bt} + \dfrac1{T- \bs}\right) - \frac1{2\eps} \left(|\bt -\bs|^2 + |\bx-\by|^2\right)\, ,\\
& = \overlinechi(\bt,\bs,\bx,\by) =   \omega \, .
\end{align*}
\end{proof}

%

\begin{lem}\label{lem:limit t0}
The point $(\tx,\ty,\tv)$ satisfies the following   estimates:
\begin{equation*}
 \delta\max\left ( |\tx| ,  |\ty|\right )  \leq 4B^{1/2} \delta^{1/2}\quad , \quad |\tilde v|^2\leq 4B + R^2 \,.
\end{equation*}
Moreover, the following limit holds true,
\[ \lim_{\eps\to 0} |\tt - \ts | + |\tx - \ty|  = 0\, . \]
\end{lem}
\begin{proof}
The evaluation $\wtildechi(0,0,0,0,0) \leq \wtildechi(\tt,\tx,\ts,\ty,\tilde v)$ yields
\begin{equation*}
\dfrac\delta2\left ( |\tx|^2 + |\ty|^2\right ) + \alpha \left ( \dfrac1{T-\tt}+  \dfrac1{T-\ts}\right )  +  \frac1{2\eps} \left(|\tt -\ts|^2 + |\tx-\ty|^2\right) +   \left(  |\tilde v|^2 - R^2\right)_+ 
\leq  4\LambdaB   \, .
\end{equation*}
We deduce the following estimates:
$|\tx| ,  |\ty|  \leq 4B^{1/2} \delta^{-1/2}$, and $|\tt - \ts|,\, |\tx - \ty| \leq C\eps^{1/2}$.

The inequality on $|\tilde{v}|^{2}$ is obvious.
\end{proof}


We continue with the comparison argument. On the one hand, assume that $(0,0)$ is an accumulation point of $(\tt,\ts)$ as $\eps \to 0$. Let $(x_0,x_0,v_0)$ be an associated accumulation point of $(\tx,\ty,\tilde v)$. For any $(t,x,v)$, we surely have $\wtildechi(t,x,t,x,v) \leq \wtildechi(\tt,\tx,\ts,\ty,\tilde v)$, hence 
\begin{equation*}
\kappa\underline{b}(t,x,v) - \overline{b}(t,x,v) -  \delta   |x|^2   - \frac{2\alpha}{T- t} - \left(  |v|^2 - R^2\right)_+  \leq  \kappa\underline{b}(\tt,\tx,\tilde v) - \overline{b}(\ts,\ty,\tilde v).
\end{equation*}
Passing to the limit along a subsequence $\eps_n \to 0$, we have by upper semi-continuity:
\begin{align*}
\kappa\underline{b}(t,x,v) - \overline{b}(t,x,v) -   \delta  |x|^2   - \frac{2\alpha}{T- t} - \left(  |v|^2 - R^2\right)_+  &\leq  \kappa\underline{b}(0+,x_0, v_0) - \overline{b}(0+,x_0,v_0) 
\\
& \leq \sup  \left (  \kappa\underline{b}_0 - \overline{b}_0 \right ) \,.
\end{align*}
Using the boundedness of $\underline{b}$, $\kappa\underline{b}_0 - \overline{b}_0$ converges uniformly towards $\underline{b}_0 - \overline{b}_0$ as $\kappa\to 1$. Finally, passing to the limit $\kappa\to 1, \delta\to 0, \alpha \to 0$, and $R\to +\infty$, we find 
\begin{align*}
\underline{b}(t,x,v) - \overline{b}(t,x,v)  \leq \sup \left (  \underline{b}_0 - \overline{b}_0 \right )  & = \sup \left (  \underline{u}_0 - \overline{u}_0\right )  \leq 0.
\end{align*}



On the other hand,  assume that $(0,0)$ is {\em not} an accumulation point of $(\tt,\ts)$ as $\eps \to 0$. Then, we distinguish between two cases:

\medskip

\noindent{\bf \# Case 1:  $\overline{b}(\ts,\ty,\tilde v) < \underset{w \in \R^n}{\min}\left(\overline{b}(\ts,\ty,w) + \frac{\vert w \vert^2}{2} \right)  = \minp \overline{u}(\ts,\ty)$.} 

\medskip

In this case, $\partial_t \underline{b} + v\cdot \nabla_x \underline{b} - 1 \leq 0$ and $\partial_t \overline{b} + v\cdot \nabla_x \overline{b} - 1 \geq 0$ in the viscosity sense.
We first use the test function
\begin{multline}\label{def:phi2}
\phi_2(s,y,v) = \kappa\underline{b}(\tt,\tx,v) -  \frac\delta2 \left( |\tx|^2 + \vert y \vert^2 \right) - \alpha\left( \dfrac1{T- \tt} + \dfrac1{T- s}\right) \\ - \frac1{2\eps} \left(|\tt-s|^2 + |\tx-y|^2\right) - \left(  |v|^2 - R^2\right)_+,
\end{multline}
associated to the supersolution $\overline{b}$ at the point $(\ts,\ty,\tilde v)$. Notice that the condition $\ts>0$ is verified for $\eps$ small enough. By using  Definition  \ref{def:supersol} of a super-solution, this yields
\begin{equation}
-\dfrac\alpha{(T - \ts)^2}  + \frac1\eps (\tt -\ts) + \tilde  v \cdot \left( - \delta \ty - \frac1\eps ( \ty- \tx)\right) - 1 \geq 0\, . \label{eq:chain rule 1.1}
\end{equation}
On the other hand, using the test function
\begin{multline}\label{def:phi1}
\phi_1(t,x,v) = \overline{b}(\ts,\ty,v) + \frac\delta2 \left( |x|^2 + \vert \ty \vert^2 \right) + \alpha \left( \dfrac1{T- \ts} + \dfrac1{T- t}\right) \\  + \frac1{2\eps} \left(|t-\ts|^2 + |x-\ty|^2\right) + \left(  |v |^2 - R^2\right)_+\, ,
\end{multline}
associated to the subsolution $\kappa \underline{b}$ at the point $(\tt,\tx,\tilde v)$, we obtain
\begin{equation}
\dfrac\alpha{(T - \tt)^2} + \frac1\eps (\tt - \ts) +\tilde  v \cdot \left(\delta \tx +  \frac1\eps ( \tx - \ty) \right) - \kappa \leq 0\,,
\label{eq:chain rule 2.1}
\end{equation}
by using Definition  \ref{def:subsol} of a sub-solution. By substracting \eqref{eq:chain rule 2.1} to \eqref{eq:chain rule 1.1}, we obtain
\begin{equation*}
 - 8B^{1/2}( 4B + R^2 )^{1/2} \delta^{1/2} + (1-\kappa)  \leq \alpha \left( \dfrac1{(T - \tt)^2} + \dfrac1{(T - \ts)^2} \right) + \delta \tilde  v \cdot \left( \tx  + \ty  \right) + (1-\kappa) \leq 0,
\end{equation*}
where we have used  Lemma \ref{lem:limit t0} in order to bound $\delta \tilde  v \cdot \left( \tx  + \ty  \right)$ from below. By choosing $\delta$ sufficiently small as compared to $1 - \kappa$, $B$ and $R$, we obtain a contradiction. 
%
%
%
%

\medskip

\noindent{\bf \# Case 2:  $\overline{b}(\ts,\ty,\tilde v) \geq \underset{w \in \R^n}{\min}\left(\overline{b}(\ts,\ty,w) + \frac{\vert w \vert^2}{2} \right) = \minp \overline{u}(\ts,\ty)$.} 

\medskip

Let $\tw \in \aminp \overline{u}(\ts,\ty)\subset \mB(0,R)$ such that $\overline{b}(\ts,\ty,\tilde v) \geq \overline{b}(\ts,\ty,\tw) + \frac{\vert \tw \vert^2}{2}$. Since the constraint \eqref{eq:parabolic constraint} is satisfied for the sub-solution $\underline{u}$, 
we have
\begin{multline*}
\wtildechi(\tt,\tx,\ts,\ty,\tilde v) \leq \kappa \left( \underline{b}(\tt,\tx,\tw) + \frac{\vert \tw \vert^2}{2} \right) - \left(\overline{b}(\ts,\ty,\tw) + \frac{\vert \tw \vert^2}{2} \right)  \\ -  \frac\delta2 \left( |\tx|^2 + \vert \ty \vert^2 \right) - \alpha\left( \dfrac1{T- \tt} + \dfrac1{T- \ts}\right) \\- \frac1{2\eps} \left(|\tt -\ts|^2 + |\tx-\ty|^2\right) - (|\tilde  v|^2 - R^2)_+,
\end{multline*}
which can be reformulated as
\begin{equation*}
\wtildechi(\tt,\tx,\ts,\ty,\tilde v) \leq \wtildechi(\tt,\tx,\ts,\ty,\tw) + \frac12 (\kappa -1)  \vert \tw \vert^2 + (|\tw|^2 - R^2)_+ -  (|\tilde  v|^2 - R^2)_+.
\end{equation*}
Since $\kappa < 1$ and $\tw \in \mB(0,R)$ we have both that $\tilde v \in \mB(0,R)$, and that $\tw = 0$, by the maximality of $(\tt,\tx,\ts,\ty,\tilde v)$. 
As a consequence, we find that $\aminp \overline{u}(\ts,\ty)$ is reduced to the singleton $\lbrace 0 \rbrace$ and that $\wtildechi(\tt,\tx,\ts,\ty,\tilde v) = \wtildechi(\tt,\tx,\ts,\ty,0)$.

Next, we find a point that maximizes $\overlinechi$. By Lemma \ref{lem:Oomega}, we find:
\begin{align*}
\omega \leq \Omega = \wtildechi(\tt,\tx,\ts,\ty,0) &= \kappa\underline{b}(\tt,\tx,0) - \overline{b}(\ts,\ty,0) -  \frac\delta2 \left( |\tx|^2 + \vert \ty \vert^2 \right)\\& \qquad - \alpha \left( \dfrac1{T-\tt} + \dfrac1{T- \ts}\right) - \frac1{2\eps} \left(|\tt -\ts|^2 + |\tx-\ty|^2\right) \\
&= \kappa\minp \underline{u}(\tt,\tx) - \minp \overline{u}(\ts,\ty) -  \frac\delta2 \left( |\tx|^2 + \vert \ty \vert^2 \right)\\& \qquad - \alpha\left( \dfrac1{T-\tt} + \dfrac1{T- \ts}\right) - \frac1{2\eps} \left(|\tt -\ts|^2 + |\tx-\ty|^2\right) \\
&= \overlinechi(\tt,\tx,\ts,\ty) \leq \omega.
\end{align*}
%
%
Therefore, $(\tt,\tx,\ts,\ty)$ maximizes $\overlinechi$. We are now ready to perform the last step. 
%
%
%
First, we introduce  after \eqref{def:phi1}
\[\psi_1(t,x) = \phi_1(t,x,0) = \overline{u}(\ts, \ty,0) + \frac\delta2 \left( |x|^2 + \vert \ty \vert^2 \right) + \alpha \left( \dfrac1{T- \ts} + \dfrac1{T- t}\right) \\  + \frac1{2\eps} \left(|t-\ts|^2 + |x-\ty|^2\right)  
\,,\]
such that $\kappa \minp \underline{u} - \psi_1$ has a maximum at $(\tt,\tx)$. We can thus use the subsolution criterion and write
\begin{equation}
 \dfrac\alpha{(T - \tt)^2} + \frac1\eps (\tt - \ts)  \leq 0\, .
\label{eq:chain rule 2}
\end{equation}
Then, we introduce  after \eqref{def:phi2}:
\[\psi_2(s,y) =  \phi_2(s,y,0) =  \kappa\underline{u}(\tt,\tx,0) -  \frac\delta2 \left( |\tx|^2 + \vert y \vert^2 \right) - \alpha\left( \dfrac1{T- \tt} + \dfrac1{T- s}\right) - \frac1{2\eps} \left(|\tt-s|^2 + |\tx-y|^2\right) \,,\] 
such that $ \minp\overline{u} - \psi_2$ has a minimum at $(\ts,\ty)$.
Notice that the condition $\ts>0$ is verified for $\eps$ small enough, and that precisely $\aminp \overline{u}(\ts,\ty)=\lbrace 0 \rbrace$. The second criterion in \eqref{eq:S2 intro} can be applied, and we obtain,
\begin{equation}
-  \dfrac\alpha{(T- \ts)^2} - \frac1\eps (\ts -\tt) \geq 0\, . \label{eq:chain rule 1}
\end{equation}

By substracting \eqref{eq:chain rule 2} to \eqref{eq:chain rule 1}, we obtain a contradiction as $\alpha >0$, concluding the proof of the comparison principle.

\section{Convergence of $\u^\eps$ when $\eps \to 0$.}\label{sec:Conv}

This Section is devoted to the proof of Theorem \ref{HJlimit}. We follow the method of half-relaxed limits of Barles and Perthame \cite{barles_discontinuous_1987}. We define accordingly the upper semi-continuous limit $u^*$ and the lower semi-continuous limit $u_*$ as follows:
\begin{equation}\label{eq:semilimit}
u^*(t,x,v) = \limsup_{\footnotesize
\begin{array}{c} \eps \to 0\\
(s,y,w)\to (t,x,v) 
\end{array}} u^\eps(s,y,w)\, ,\quad 
u_*(t,x,v) = \liminf_{\footnotesize
\begin{array}{c} \eps \to 0\\
(s,y,w)\to (t,x,v) 
\end{array}} u^\eps(s,y,w).
\end{equation}
We establish below that the former is a viscosity sub-solution, and the latter is a viscosity super-solution. Then, the comparison result obtained in the previous section guarantees that $u^* \leq u_*$. Hence $u_* = u^*$, and we get  convergence of $u^\eps$ towards the viscosity solution $u$.

We define the auxiliary function $b^\eps = u^\eps - \vert v \vert^2/2$ as in the previous section. It solves the following equation:
\begin{equation*}
\partial_t b^{\eps}(t,x,v) + v \cdot \nabla_x b^{\eps}(t,x,v) = 1 - \int_{\R^n} M_\eps (v') \exp \left( \frac{b^{\eps}(t,x,v) - b^{\eps}(t,x,v') }{\e} \right) dv'.
\end{equation*}
It results from the maximum principle that $b^\eps$ is uniformly bounded for all $t>0$ provided that the initial condition $b_0$ is bounded which is the assumption \eqref{eq:initial condition 1}. Hence, $u^*$ and $u_*$ verify the hypotheses of Theorem \ref{theo:comp}. 

We split the proof in two steps.  

\medskip
\noindent{\bf \# Step 1: $u^*$ is a viscosity sub-solution.}
\medskip

We begin with the parabolic constraint \eqref{eq:parabolic constraint}. Let $(t_0,x_0,v_0)$, with $t_0>0$, and let $w\in \R^n$. 

The Duhamel formula is expressed as follows, for $0< \tau < t$:
\begin{equation}\label{eq:dufo}
f^\eps(t,x,v) = e^{-\tau/\eps}  f^\eps (t-\tau, x - \tau v, v) + M_\eps(v)\int_0^\tau e^{-s/\eps}   \rho^\eps (t-s,x-sv)\, ds\, . 
\end{equation}
By omitting the first contribution in the right-hand-side, we deduce that 
\begin{equation*}
u^\eps(t,x,v) \leq  - \eps \log\left ( \int_0^\tau e^{-s/\eps} \rho^\eps(t-s,x-sv)\, ds
\right ) +  \frac{|v|^2}{2}  + \eps \log \left ( (2 \pi\eps)^{n/2} \right )\, .
\end{equation*}
Choose any $w\in \R^n$, and let $\delta>0$. By definition of the upper semi-limit, there exist $r>0$ and $\eps_{0}>0$ such that for all $\eps<\eps_{0}$, $u^\eps(t,x,w)\leq u^*(t_0,x_0,w) + \delta$ in a neighbourhood of radius $r$ of $(t_0,x_0,w)$. For $(t,x,v)$ in the neighbourhood of $(t_0,x_0,v_0)$, we have
\begin{align*}
\rho^\eps(t-s,x-sv) &= \int \exp\left ( -\dfrac{u^\eps(t-s,x-sv,v')}\eps \right )\, dv'\\
& \geq   |\mB(w,r)| \exp\left ( -\dfrac{u^*(t_0,x_0,w) + \delta}\eps  \right )\,,
\end{align*}
provided that $|t-s - t_0|< r$ and $|x - s v - x_0|< r$. This holds true if $\max(s,|t-t_0|,s|v|,|x-x_0|) < r/2$. Thus, we find
\begin{equation*}
u^\eps(t,x,v) \leq  \tau - \eps \log\left (  \int_0^{ \min (\tau, r/2,  (r/(2|v|)) )} |\mB(w,r)| \exp\left ( -\frac{u^*(t_0,x_0,w) + \delta}{\eps}  \right ) ds \right )    + \dfrac{|v|^2}{2} + \mathcal O(\eps \log \eps)\, .
\end{equation*}
Taking the $\limsup$ of $u^\eps$ as $\eps \to 0$ and $(t,x,v)\to (t_0,x_0,v_0)$, then letting $\tau,\delta \to 0$, we find that the following inequality holds true for all $w$, 
\begin{equation}\label{eq:constraint u*}
u^*(t_0,x_0,v_0) \leq u^*(t_0,x_0,w) +  \frac{|v_0|^2}{2} \, ,
\end{equation}
hence the constraint is satisfied. 

Then, we consider a pair of $\mathcal{C}^1$ test functions $(\phi,\psi)$   as in Definition \ref{def:subsol intro}, namely $\u^* - \phi$ and $ \minp \u^*  - \psi$ have a local maximum with respect to variables $(t,x)$ at the point $(t_0,x_0,v_0)$, with $t_0 > 0$. Note that the maximum can be supposed global and strict without loss of generality. The following inequalities must be checked:
\begin{enumerate}[(i)]
\item\label{(i)} $\partial_t \phi(t_0,x_0,v_0) + v_0 \cdot \nabla_x \phi(t_0,x_0,v_0) - 1 \leq 0\,,$  
\item\label{(ii)} $\partial_t \psi(t_0,x_0) \leq 0\, .$
\end{enumerate}
On the one hand, the first condition \eqref{(i)} is immediate as the following inequality is always satisfied:
\begin{equation*}
\forall \eps > 0, \qquad \forall (t,x,v) \in \R_+^* \times \R^{2n}, \qquad \partial_t \u^\eps(t,x,v)  + v \cdot \nabla_x  \u^\eps(t,x,v)  \leq  1 \,.
\end{equation*}

On the other hand, we observe that $\u^*$ attains its minimum at $v=0$ due to the constraint \eqref{eq:constraint u*}. Let $\delta >0$. There exist  $(t_\eps,x_\eps,v_\eps) \to (t_0,x_0,0)$ such that $u^\eps - \psi - (1+\delta)\vert v \vert^2/2$ has a local maximum at $(t_\eps,x_\eps,v_\eps)$, which is global with respect to velocity. In particular, for any $w$ we have,
\begin{equation*}
u^\eps(t_\eps, x_\eps, v_\eps) - (1+\delta)\frac{\vert v_\eps \vert^2}2 \geq u^\eps(t_\eps, x_\eps, w) - (1+\delta)\frac{\vert w \vert^2}2.
\end{equation*}
Thus, at $(t_\eps,x_\eps,v_\eps)$ we find, 
\begin{equation*}
\partial_t \psi(t_\eps,x_\eps) + v_\eps \cdot \nabla_x  \psi(t_\eps,x_\eps) \leq 1 - \frac{1}{(2 \pi\eps)^{n/2} }
\int_{\R^n} \exp \left( \frac{- (1+\delta)\vert v' \vert^2}{2\e} \right) dv' = 1 - \frac{1}{(1+\delta)^{n/2}}\, .
\end{equation*}
Passing to the limit $\eps \to 0$, then $\delta \to 0$, we get the second condition \eqref{(ii)}.

Finally, it remains to check the criterion for the initial data. It can be deduced from the Duhamel formulation \eqref{eq:dufo}:
\begin{equation*}
u^\eps(\tau,x,v) \leq \min \left ( \tau + u_0(x - \tau v, v) , -\eps \log \left (  M_\eps(v)\int_0^\tau e^{-s/\eps}   \rho^\eps (\tau-s,x-sv)\, ds\right ) \right )
\end{equation*}
Letting $\eps\to 0$, then $\tau\to 0$, we deduce from the same reasoning as above \eqref{eq:constraint u*}, and the continuity of $u_0$, that
\begin{equation}\label{eq:IC sub}
u^*(0+,x_0,v_0) \leq \min \left ( u_0(x_0,v_0) ,\minp u^*(0+,x_0) +  \frac{|v_0|^2}{2}\right )\, .
\end{equation}
We find in the next iteration of \eqref{eq:IC sub} that $\minp u^*(0+,x_0)\leq \minp u_0(x_0)$, so that we find Definition \ref{def:subsol intro}(i) with the appropriate initial value $\min\left(u_0,\minp u_{0} + |v|^{2}/2\right)$.

\medskip 
\noindent {\bf \# Step 2: $u_*$ is a viscosity super-solution.}
\medskip

We consider a pair of $\mathcal{C}^1$ test functions $(\phi,\psi)$  as in Definition \ref{def:supersol intro}, namely $\u_* - \phi$ and $ \minp  \u_*  - \psi$ have a local minimum with respect to variables $(t,x)$ at the point $(t_0,x_0,v_0)$, with $t_0 > 0$. The following inequalities must be checked:
\begin{enumerate}[(i)]
\item $\partial_t \phi(t_0,x_0,v_0)  + v_0 \cdot \nabla_x \phi(t_0,x_0,v_0) - 1 \geq 0\,,$ if $\u_*(t_0,x_0,v_0) - \minp \u_*(t_0,x_0) - {\vert v_0\vert^2}/{2} < 0\,,$ 
\item $\partial_t \psi(t_0,x_0) \geq 0\,,$ if $ \aminp\u_* (t_0,x_0) = \{0\}\,$.
\end{enumerate}

In the former case, we define $ 2 \delta =  {\vert v_0\vert^2}/{2} +  \minp \u_* (t_0,x_0) - \u_*(t_0,x_0,v_0)  > 0$. There exist $(t_\eps,x_\eps,v_\eps) \to (t_0,x_0,v_0)$ such that $u^\eps - \phi$ has a local minimum at $(t_\eps, x_\eps, v_\eps)$ with respect to $(t,x)$, and $(t_\eps,x_\eps,v_\eps)$ realizes the $\liminf$ in \eqref{eq:semilimit}. For $\eps$ small enough, we have 
\begin{equation}\label{eq:unsaturated constraint supersol}
u^\eps(t_\eps,x_\eps,v_\eps)  - \minp \u^\eps (t_\eps,x_\eps) - \dfrac{|v_\eps|^2}{2} < - \delta \, .
\end{equation}
Otherwise, we would find a subsequence $(t_{\eps'},x_{\eps'},w_{\eps'})$ such that $u^{\eps'} (t_{\eps'},x_{\eps'},v_{\eps'}) - \u^{\eps'} (t_{\eps'},x_{\eps'},w_{\eps'}) - |v_{\eps'}|^2/2  \geq -\delta$. Passing to the $\liminf$ by compactness of $(w_{\eps'})$, we would obtain $\u_*(t_0,x_0,v_0) - |v_0|^2/2 \geq - \delta   + \u_*(t_0,x_0,w_0) \geq - \delta  +   \minp u_*(t_0,x_0)  $. The latter inequality is in contradiction with the definition of $\delta$.  

From \eqref{eq:unsaturated constraint supersol}, we deduce that 
\begin{align*}
\partial_t \phi(t_\eps,x_\eps,v_\eps)  + v_\eps \cdot \nabla_x  \phi(t_\eps,x_\eps,v_\eps) - 1 &= - \frac{1}{(2 \pi\eps)^{n/2} } \int \exp\left ( \frac1\eps \left ( \u^\eps(t_\eps,x_\eps,v_\eps) -\u^\eps(t_\eps,x_\eps,v') - \frac{\vert v_\eps \vert^2}2 \right )  \right )\, dv' \\
& \geq - \frac{e^{-\frac{\delta}{\eps}}}{(2 \pi\eps)^{n/2} } \int \exp\left ( \frac1\eps \left ( \minp \u^\eps(t_\eps,x_\eps) -\u^\eps(t_\eps,x_\eps,v')\right )  \right )\, dv'\, .
\end{align*}
We claim that the last contribution vanishes as $\eps \to 0$. Indeed, we can split the integral over $\mB(0,R)$ and $\R^n\setminus \mB(0,R)$, where $R$ is such that $\minp u^\eps(t_\eps,x_\eps) - u^\eps(t_\eps,x_\eps,v') \leq 2 B - |v'|^2/2 $ is uniformly negative for $|v'|>R$. So the contribution beyond $\mB(0,R)$ vanishes as $\eps\to 0$, and the contribution inside $\mB(0,R)$ is as follows:
\begin{equation*}
\frac{e^{-\frac{\delta}{\eps}}}{(2 \pi\eps)^{n/2} } \int_{\mB(0,R)} \exp\left ( \frac1\eps \left ( \minp \u^\eps(t_\eps,x_\eps) -\u^\eps(t_\eps,x_\eps,v')\right )  \right )\, dv'
\leq \frac{e^{-\frac{\delta}{\eps}}}{(2 \pi\eps)^{n/2} } |\mB(0,R)| \to 0\,, 
\end{equation*}
simply by definition of the $\minp$.
Hence the first condition is fulfilled.

For the other condition, we recall that  $\psi$ is a test function such that $\minp u_* - \psi$ has a global strict minimum at $(t_0,x_0)$ with $t_0>0$.  Suppose in addition that $\aminp u_*(t_0,x_0) = \{0\}$. 
%
%
We define a perturbed test function with localized and small perturbation:
\begin{equation}\label{eq:perturb test fun}
\phi^\eps(t,x,v) = \psi(t,x) + \min\left ( \dfrac{|v|^2}{2} , \eps^{1/2} \right ) \, .
\end{equation}
The method of the perturbed test function is classical in homogenization theory \cite{evans_perturbed_1989}. It was used in this context to deal with compact velocities as discussed in Section \ref{sec:intro compact} \cite{bouin_kinetic_2012,bouin_hamilton-jacobi_2015,caillerie_large_2017}. In the choice of the perturbation in  \eqref{eq:perturb test fun}, it is important that the threshold $\eps^{1/2}$ is such that $\eps \ll \eps^{1/2} \ll 1$. 
 
By uniform convergence of $\phi^\eps$, there exist $( t_{\eps},x_{\eps}, v_{\eps} )\to  (t_0, x_0,0) $ such that $u^\eps - \phi^\eps$ has a global minimum at $( t_{\eps},x_{\eps}, v_{\eps} )$.
The point $v_\eps \to 0$ results from the additional condition  $\aminp u_*(t_0,x_0) = \{0\}$. Otherwise, we could find another minimum point $v_*\neq 0$ by extracting a subsequence of $(v_\eps)$ outside a neighbourhood of the origin. 
%
We deduce from the equation on $u^\eps$ that 
\begin{multline*}
\partial_t \psi (t_{\eps},x_{\eps}) + v_\eps\cdot \nabla \psi (t_{\eps},x_{\eps})\\ \geq  1 -  \exp\left ( - \frac{|v_\eps|^2}{2\eps} + \min\left ( \dfrac{|v_\eps|^2}{2\eps} ,  \eps^{-1/2} \right )   \right ) 
 \frac{1}{(2\pi\eps)^{n/2}}  \int_{\mB(0,R)} \exp\left (  - \min\left ( \dfrac{|v'|^2}{2\eps} ,  \eps^{-1/2} \right )   \right ) \, dv' \\
- \frac{1}{(2 \pi\eps)^{n/2} } \int_{\R^n\setminus  \mB(0,R)} \exp\left ( \frac1\eps \left ( \u^\eps(t_\eps,x_\eps,v_\eps) -\u^\eps(t_\eps,x_\eps,v') - \frac{\vert v_\eps \vert^2}2 \right )  \right )\, dv' \, ,
\end{multline*}
where $R$ is chosen large enough to ensure the smallness of the last integral due to uniform quadratic growth of $u$ with respect to $v$.
Next, we obtain
\begin{multline*}
\partial_t \psi (t_{\eps},x_{\eps}) + v_\eps\cdot \nabla \psi (t_{\eps},x_{\eps})  \geq  1 -   
\frac{1}{(2\pi)^{n/2}} \int_{\mB(0,R\eps^{-1/2})} 
  \exp\left (  - \min\left ( \dfrac{|v'|^2}{2} ,  \eps^{-1/2} \right )  \right ) \, dv'  \\
- \frac{1}{(2\pi\eps)^{n/2}}  \int _{\R^n\setminus  \mB(0,R)} \exp\left ( \frac1\eps \left (2 B - \dfrac{|v'|^2}{2}\right ) \right )\, dv'   \,.
\end{multline*}
The first integral term converges to $1$ as $\eps\to 0$, and the  other one converges to 0. Therefore we get $\partial_t \psi (t_{0},x_{0}) \geq 0$ in the limit as required.


It remains to check the criterion for the initial data. We deduce from the  Duhamel formulation \eqref{eq:dufo} that 
\begin{equation*}
u^\eps(\tau,x,v) \geq \min \left (  \tau + u_0(x-\tau v, v) , -\eps \log \left (  M_\eps(v)\int_0^\tau e^{-s/\eps}   \rho^\eps (\tau-s,x-sv)\, ds\right ) \right ) - \eps\log 2
\end{equation*}
Let $R$ be so large that we can accurately restrict the integration with respect to $v$ on $\mB(0,R)$ as above. Let $\delta>0$. By definition of the lower limit \eqref{eq:semilimit}, there exists $\eps_0>0$ such that $u^\eps(\tau,x,v)\geq \minp u_*(0+,x_0) - \delta$ in the neighbourhood of $(0,x_0)$, and $v\in \mB(0,R)$, for $\eps < \eps_0$. By taking the limit $\eps\to 0$,  $\delta\to 0$, and $\tau\to 0$, we get that  
\begin{equation}\label{eq:IC sup}
u_*(0+,x_0,v_0) \geq \min \left ( u_0(x_0,v_0) ,\minp u_*(0+,x_0) +  \frac{|v_0|^2}{2}\right )\, .
\end{equation}
However, it cannot be deduced immediately that $u_*(0+,x_0,v_0) \geq \min\left(u_0(x_0,v_0),\minp u_{0}(x_0) + |v_0|^{2}/2\right)$ by iteration of this inequality. The latter  amounts to proving that $\minp u_*(0+,x_0) \geq \minp u_{0}(x_0)$. 

We argue by contradiction in order to uncover the boundary layer at initial time.  Suppose that $\minp u_*(0+,x_0) < \minp u_{0}(x_0)$ (strict jump of the minimum value from below at $t=0+$). First, we observe that necessarily, $\aminp u_*(0+,x_0) = \{0\}$. Indeed, if we denote by $v_*$ a minimum point of $u_*(0+,x_0,\cdot)$, then we get from \eqref{eq:IC sup} evaluated at $(x_0,v_*)$ that $u_*(0+,x_0,v_*) \geq \minp u_*(0+,x_0) + |v_*|^{2}/2$, hence $v_* = 0$.
Second, we define the test function 
\begin{equation*}
\psi(t,x) = \minp u_{0}(x_0) - \frac{1}{2}|x - x_0|^2 - t\,.
\end{equation*}
Consider the function $\minp u^\eps - \psi$. It takes value $0$ at $(0,x_0)$ by definition, and uniformly negative values as $\eps\to 0$ in the neighbourhood of $(0,x_0)$ because the $\liminf$ equals $\minp u_*(0+,x_0) - \minp u_{0}(x_0) < 0$ from our supposition. Hence, there exists $(t_\eps,x_\eps,v_\eps)\to (0,x_0,0)$, with $t_\eps >0$, such that $u^\eps - \phi^\eps$ has a global minimum at $(t_\eps,x_\eps,v_\eps)$, where $\phi^\eps$ is defined as in \eqref{eq:perturb test fun}. Then, we can repeat the same arguments as above (since $t_\eps>0$), ending up with $\partial_t\psi(0,x_0) \geq 0$ which is clearly a contradiction.

\section{The variational formula}
\label{sec:variational}

This section is devoted to the proof of Theorem \ref{th:hopf lax intro}. 
As usual, the arguments rely on the following semi-group property, which is straightforward. 
\begin{lem}[Dynamic programming principle]\label{lem:DPP}
For all intermediate $s\in (0,t)$, we have:
\begin{equation}\label{eq:DPP}
U(t,x,v) = \inf_{\footnotesize  \begin{array}{c}
\{(y,\sigma)\in \R^n\times \Sigma_s^t:\\ \gamma(t) = x, \sigma_N = v\}
\end{array}} \left \{ U(s,y,\sigma_0) + \mA_s^t[\sigma] \right  \}
\end{equation}
\end{lem}


\begin{proof}[Proof of Theorem \ref{th:hopf lax intro}]
We begin with the criteria for the viscosity subsolution. Let $U^*$ be the upper semi-continuous envelope of $U$. To prove that the constraint is always satisfied -- Definition \ref{def:subsol intro}(ii) -- we may choose any $\sigma$ such that $\sigma_0 = w$ and $\sigma_1 = v$ (or possibly $\sigma_0 = w = v$), for which we obtain that
\begin{equation*}
U(t,x,v) \leq U(t-\tau,y,w) + \dfrac12 |v|^2 + \tau, \quad  \text{ for some } y \text{ such that }  |x-y| \leq \tau \max (|v|,|w|).  
\end{equation*}
Taking the $\limsup$ as $\tau\to 0$, we get that
\begin{equation*}
U(t,x,v) \leq U^*(t,x,w) + \dfrac12 |v|^2 \,.
\end{equation*}
We deduce that 
\begin{equation}\label{eq:min U*}
U^*(t,x,v) \leq   \minp  U^*(t,x) + \dfrac12 |v|^2 .
\end{equation}

Let $(\phi, \psi)$ be a pair of test functions for $(U^*, \minp U^*)$  as in Definition \ref{def:subsol intro}(iii). We have:
\begin{equation}\label{eq:subsol 1}
U^*(t_0,x_0,v_0) - U^*(t_0-\tau,x_0 - \tau v_0,v_0) \geq   \phi(t_0,x_0,v_0) - \phi(t_0-\tau,x_0 - \tau v_0,v_0). 
\end{equation}
We may choose the constant configuration $\sigma \equiv v$ in \eqref{eq:DPP}, so as to obtain 
\begin{equation*}
U(t,x,v)  \leq  U(t-\tau,x - \tau v,v) + \tau\, ,  
\end{equation*}
in the neighbourhood of $(t_0,x_0,v_0)$. Taking the upper semicontinuous envelope, we obtain that 
\begin{equation}\label{eq:subsol 2}
U^*(t_0,x_0,v_0)  \leq  U^*(t_0-\tau,x_0 - \tau v_0,v_0) + \tau\, .  
\end{equation}
Combining \eqref{eq:subsol 1} and \eqref{eq:subsol 2}, we obtain $\partial_t \phi + v_0\cdot \nabla_x \phi - 1 \leq 0$ at $(t_0,x_0,v_0)$. 

From \eqref{eq:min U*} we obtain that the partial minimum of $U^*$ is attained at $v = 0$. We have
\begin{equation*}
U^*(t_0,x_0,0) - U^*(t_0-\tau,x_0 ,0) \geq   \psi(t_0,x_0) - \psi(t_0-\tau,x_0 ). 
\end{equation*}
Repeating the previous argument, we may choose the step function $\sigma$, such that $\sigma  = 0$ on $(t-\tau,t) $, and $\sigma = v$ on the last time $\{t\}$ -- or $[t-\delta,t]$ for arbitrary small $\delta$. We deduce that \begin{equation*}
U(t,x,v)  \leq  U(t-\tau,x  ,0) + \dfrac12 |v|^2 \, ,  
\end{equation*}
in the neighbourhood of $(t_0,x_0,0)$. Taking the upper semicontinuous envelope, we obtain that 
\begin{equation*}
U^*(t_0,x_0,0)  \leq  U^*(t_0-\tau,x_0  ,0)  \, ,  
\end{equation*}
We conclude as above that $\partial_t \psi(t_0,x_0) \leq 0$. 

The condition on the initial data is easily verified, as we have $U^*(0+,x,v) \leq u_0(x,v)$ from the very definition \eqref{eq:kin HL th} by choosing a free transport trajectory on $(0,\tau]$ for arbitrary small $\tau>0$, as well as $U^*(0+,x,v) \leq u_0(x,w) + |v|^2/2$ for all $w$ by choosing a trajectory with an instantaneous jump from $w$ to $v$. 

\bigskip

We continue with the criteria for the viscosity supersolution. Let $U_*$ be the lower semicontinuous envelope of $U$. Let $(\phi, \psi)$ be a pair of test functions for $(U_*, \minp U_*)$  as in Definition \ref{def:supersol intro}(ii). Let $(t^n, x^n, v^n)$ be a minimizing sequence in the neighbourhood of $(t_0,x_0,v_0)$ such that $U(t^n, x^n, v^n)$ converges to $U_*(t_0,x_0,v_0)$. Let $\tau > 0$, and $\sigma^n$ be a  nearly  optimal trajectory for \eqref{eq:DPP} on $(t^n-\tau,t^n]$: 
\begin{equation}\label{eq:DPP super}
 \frac1 n+ U(t^n, x^n, v^n) \geq  U(t^n-\tau , y^n, \sigma_0^n) + \mA_{t^n-\tau}^{t^n}[\sigma^n]\, ,
\end{equation}
where $x^{n}=y^{n}+\int_{t^{n}-\tau}^{t^{n}}\sigma^{n}(s)ds$.

\begin{lem}\label{eq:constant sigma}
There exists $\tau_0>0$ such that, if $U_*(t_0,x_0,v_0) <  \minp U_*(t_0, x_0)  +  |v_0|^2/2$, and if $\tau< \tau_0$, then $\sigma^n \equiv v^n$ is constant beyond some finite range $n\geq N_0(\tau)$. 
\end{lem}

\begin{proof}
We argue by contradiction. Suppose that we can extract a subsequence such that $\sigma^{n\prime}$ is not constant, then we must have $\mA_{t^n-\tau}^{t^n}[\sigma^{n\prime}] \geq |\sigma_N^{n\prime}|^2/2 = |v^{n\prime}|^2/2$ due to the last time discontinuity. From \eqref{eq:DPP super} we find that 
\begin{equation*}
\dfrac 1{n^\prime}+ U(t^{n\prime}, x^{n\prime}, v^{n\prime}) \geq
\minp U(t^{n\prime}-\tau , y^{n\prime}) +\dfrac12 |v^{n\prime}|^2\, .
\end{equation*}
Taking the lower continuous envelope, we find as $n^\prime\to+ \infty$:
\begin{equation*}
U_*(t_0,x_0,v_0) \geq  \minp U_*(t_0-\tau, x_0 + \mathcal O(\tau))  +\dfrac12 |v_0|^2 \, ,
\end{equation*}
where the displacement is controlled by  $|{y^{n}}' - x_0|\leq \left (\max |{\sigma^n}'|\right ) \tau$, the latter being clearly bounded by the definition of the action.
Hence, we obtain the reverse inequality $U_*(t_0,x_0,v_0) \geq  \minp U_*(t_0, x_0)  +\frac12 |v_0|^2$ as $\tau\to 0$, by lower semi-continuity. 
\end{proof}
  
Suppose that $U_*(t_0,x_0,v_0) <  \minp U_*(t_0, x_0)  + |v_0|^2/2$. On the one hand,  we necessarily have that $v_0\neq 0$. On the other hand,  $\sigma^n\equiv v^n$ is constant beyond some finite range for $\tau$ small enough by Lemma \ref{eq:constant sigma}. Therefore, for $n$ large enough,  we have 
\begin{equation*}
 \frac 1n+U(t^n, x^n, v^n) \geq U(t^n-\tau, x^n - \tau v^n, v^n) + \tau \, ,
\end{equation*}
because $v^n\to v_0\neq 0$ and there is no change in velocity. 
Then, passing to the limit $n\to +\infty$, we find
\begin{equation*}
U_*(t_0, x_0, v_0) \geq  U_*(t_0-\tau, x_0 - \tau v_0, v_0) + \tau\, ,
\end{equation*}
Using that $U_* - \phi$ has a minimum point at $(t_0,x_0,v_0)$, we find eventually that $\partial_t \phi(t_0,x_0,v_0) + v_0\cdot \nabla_x \phi(t_0,x_0,v_0) - 1 \geq 0$. 

\bigskip

It remains to check the inequality $\partial_t \psi(t_0,x_0)\geq 0$ provided that $\aminp U_* (t_0,x_0) = \left\lbrace 0 \right\rbrace$.

Let $r >0$. There exists $\delta>0$ such that $  U_*(t_0,x_0,v) \geq \minp U_*(t_0,x_0) + \delta$ for all $v\notin \mB(0,r)$ (otherwise we could extract a subsequence converging towards a minimal velocity outside $\mB(0,r)$). 

Again, let $(t^n,x^n,v^n)$ be such that $U(t^n,x^n,v^n)$ converges towards $\minp U_*(t_0,x_0)$, and that $(t^n,x^n)\to (t_0,x_0)$. 
Let $\tau > 0$, and $\sigma^n$ be a nearly optimal trajectory  in \eqref{eq:DPP} on $(t-\tau,\tau]$: 
\begin{equation}\label{eq:DPP super 2}
\frac1n + U(t^n, x^n, v^n) \geq  U(t^n-\tau , y^n, \sigma_0^n) + \mA_{t^n-\tau}^{t^n}[\sigma^n]\, .
\end{equation}
\begin{lem}
There exists $\tau_0>0$ such that $|\sigma_0^{n}|<2r$ beyond a certain range $n\geq N_0(\tau)$ if $\tau<\tau_0$.
\end{lem}
\begin{proof}
We argue by contradiction. Suppose that we can extract a subsequence such that $\sigma_0^{n\prime}\to v'_\tau\notin \mB(0,r)$. Then, we find as previously that 
\begin{align*}
\frac1{n'} + U(t^{n\prime},x^{n\prime},v^{n\prime}) & \geq U(t^{n\prime} - \tau ,y^{n\prime},\sigma_0^{n\prime}) \\
\minp U_*(t_0,x_0) & \geq U_*(t_0- \tau,x_0+ \mathcal O(\tau),v'_\tau)\\
\minp U_*(t_0,x_0) &  
\geq U_*(t_0,x_0,v'_\tau) - \omega(\tau) 
\end{align*}
%
where $\omega(\tau)\to 0$ as $\tau\to 0$ by lower semi-continuity. Then, it is sufficient to exhibit $\tau_0>0$ such that $|\omega(\tau)|<\delta/2$ for all $\tau<\tau_0$. 
\end{proof}

At this stage we have gained information on the initial velocity $\sigma_0^n$. However, the function $\sigma^n$ might be quite complicated. Nevertheless, as a direct consequence of the definition of the action $\mA$ \eqref{eq:A intro}, we get that 
\begin{equation*}
\mA_{t^n-\tau}^{t^n}[\sigma^n] \geq \dfrac12 \max_{i=1..N} |\sigma_i^n|^2\,.  
\end{equation*}
This enables   controlling the displacement in the following way:
\begin{equation}\label{eq:estimate displ}
|x^n - y^n| 
\leq \tau \max \left ( |\sigma_0^n|, \max_{i=1..N} |\sigma_i^n| \right ) \leq  
 \tau \left (  2r + \left ( 2 \mA_{t^n-\tau}^{t^n}[\sigma] \right )^{1/2} \right )
\end{equation}
Using \eqref{eq:DPP super 2}, and using the test function $\psi$, we find that 
\begin{align*}
& \frac1n +  U(t^n, x^n, v^n) - \minp U_*(t_0, x_0)\\ 
& \geq \minp U_*(t^n-\tau , y^n)  + \mA_{t^n-\tau}^{t^n}[\sigma^n] - \minp U_*(t_0, x_0) \\
& \geq \psi(t^n-\tau,y^n) -  \psi(t_0,x_0)  + \mA_{t^n-\tau}^{t^n}[\sigma^n] \\
&\geq \psi(t^n,x^n) -  \psi(t_0,x_0)  -\tau \partial_t \psi(t^n,x^n) - |x^n - y^n| \left | \nabla_x\psi(t^n,x^n) \right | + \mA_{t^n-\tau}^{t^n}[\sigma^n] \\
&\hspace{280pt}  - \tau\omega(\tau) - |x^n - y^n|\omega(x^n - y^n)  \,\\
&\geq \psi(t^n,x^n) -  \psi(t_0,x_0)  - \tau \partial_t \psi(t^n,x^n) - \tau \left ( 2r + \left ( 2 \mA_{t^n-\tau}^{t^n}[\sigma] \right )^{1/2} \right ) \left | \nabla_x\psi(t^n,x^n) \right |  + \mA_{t^n-\tau}^{t^n}[\sigma^n] \\
&\hspace{280pt}  - \tau \omega(\tau)  - |x^n - y^n|\omega(x^n - y^n)\, . 
\end{align*}
We focus on the leading order terms, neglecting the higher-order corrections for the sake of clarity (they can be dealt with similarly).
Plugging the following inequality into the preceding estimate,
\begin{equation}\label{eq:Young}
\tau  \left | \nabla_x\psi(t^n,x^n) \right |  \left ( 2 \mA_{t^n-\tau}^{t^n}[\sigma] \right )^{1/2}  \leq  \mA_{t^n-\tau}^{t^n}[\sigma^n] + \frac{\tau^2}2 \left\vert \nabla_x\psi(t^n,x^n) \right\vert^2\,,
\end{equation}
we get the following estimate:
\begin{align*}
& \frac1n + U(t^n, x^n, v^n)  - \minp U_*(t_0, x_0) \\
&\geq \psi(t^n,x^n) - \psi(t_0,x_0)  - \tau \partial_t \psi(t^n,x^n) - 2 r \tau   \left | \nabla_x\psi(t^n,x^n) \right |  - \frac{\tau^2}2 \left\vert \nabla_x\psi(t^n,x^n) \right\vert^2  \\
&\hspace{280pt}  - \tau \omega(\tau)  - |x^n - y^n|\omega(x^n - y^n)\, .
\end{align*}
By letting $n\to +\infty$, and then dividing by $\tau$, this becomes
\begin{equation*}
0 \geq - \partial_t \psi(t^0,x^0) - 2 r  \left | \nabla_x\psi(t^0,x^0) \right |  - \frac{\tau}2 \left\vert \nabla_x\psi(t^0,x^0) \right\vert^2 - \omega(\tau) \, , 
\end{equation*}
where we have used that $x^n - y^n = \mathcal{O}(\tau)$ \eqref{eq:estimate displ}. 
Now taking the limit $\tau\to 0$, then $r\to 0$, we conclude that $\partial_t \psi(t_0,x_0)\geq 0$, as required.  
 
The condition on the initial data is easily verified. Indeed, if we examine the definition \eqref{eq:kin HL th}, we find that piecewise trajectories with no jump $\mA_0^t[\sigma] =  \mathcal O(\tau)$, or with a single jump $\mA_0^t[\sigma] = |v|^2/2 + \mathcal O(\tau)$ are admissible, but any other choice with more than two jumps with a uniformly positive contribution as $\tau\to 0$ would lead to a higher cost than 
\begin{equation*}
\min \left ( u_0(x,v) , \minp u_0(x) + \frac{|v|^2}{2} \right )\, .
\end{equation*}
Hence, the latter is a lower bound for $U_*(0+,x,v)$ (and actually the exact limit of $U(\tau,x,v)$ as $\tau\to 0$). 
\end{proof}

\section{Computation of the kernel}
\label{sec:kernel}
We present below a systematic way to reduce the action of a piecewise linear curve $\mA_0^t[\sigma]$ to a finite number of cases that must be compared to obtain the kernel of the variational representation formula. Then, the one-dimensional case is calculated thoroughly. Some partial features of the two-dimensional case are given to illustrate the geometric complexity of the problem. 

\begin{figure} 
\begin{center}
\begin{subfigure}{.45\linewidth}
\includegraphics[width=\linewidth]{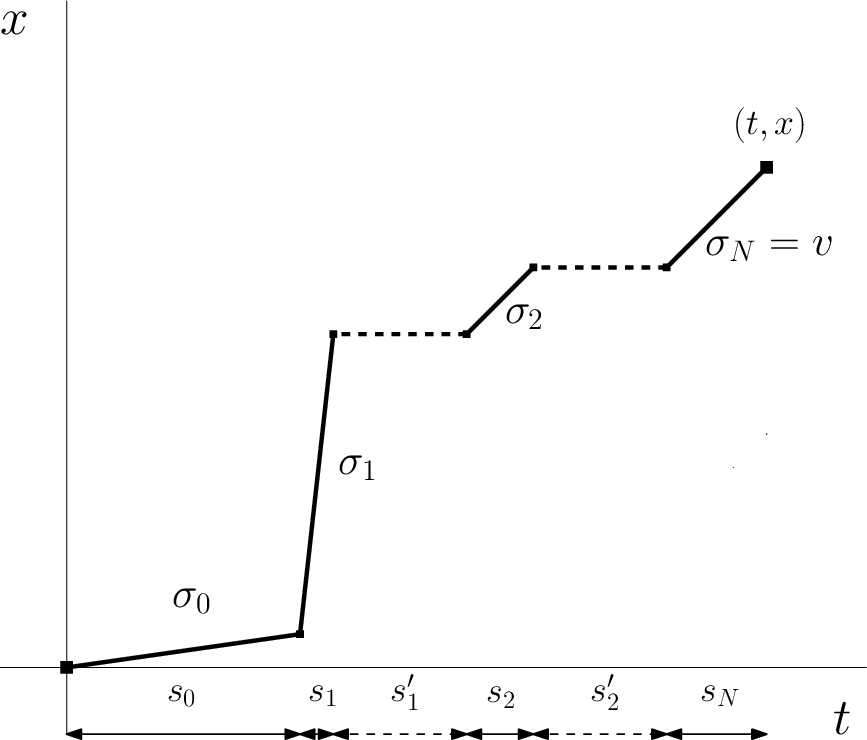} 
\caption{}
\end{subfigure}
\begin{subfigure}{.45\linewidth}
\includegraphics[width=\linewidth]{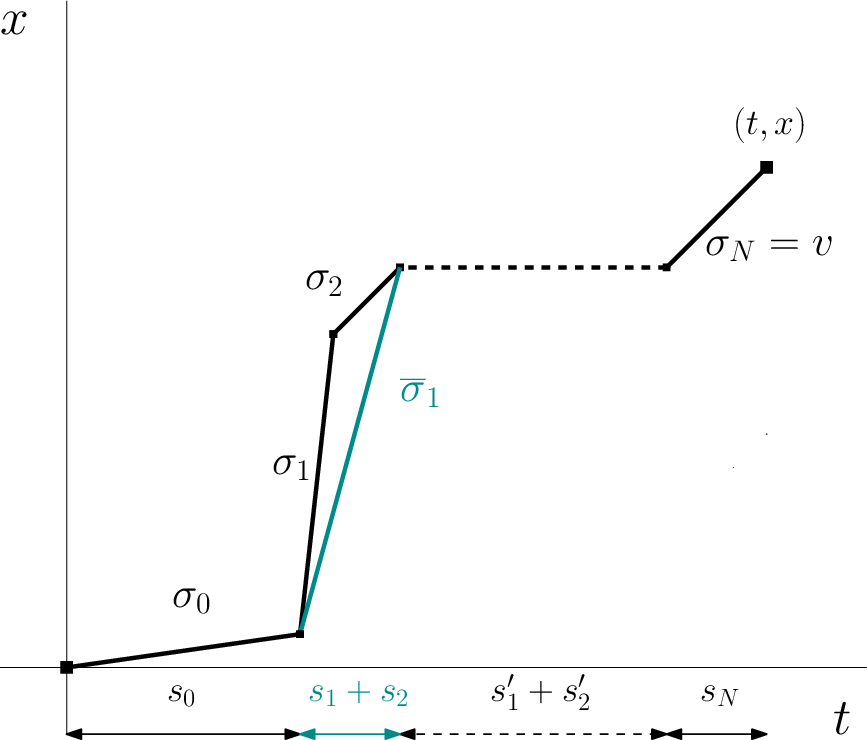} 
\caption{}
\end{subfigure}
\caption{Schematic reduction from  two non-trivial intermediate paths with non-zero velocities to a single one.}
\label{fig:arbitrary}
\end{center}
\end{figure}

\begin{prop}\label{prop:sigma1}
In arbitrary dimension, there is at most one  intermediate non trivial path $(s_1,\sigma_1)$ with non-zero velocity $\sigma_1\neq 0$ and at most one flat path with zero velocity. Hence, the action reduces to the following problem:
\begin{equation*}
\begin{cases}
\displaystyle\text{either }  \sigma \equiv \sigma_0 = \frac x t\,,\quad\text{then}\quad \mA = t{\bf 1}_{\sigma_0\neq 0} \quad \text{(free transport)}\medskip\\
\displaystyle \text{or}:\quad  
\mA = \frac{|\sigma_2|^2}{2}  + L(x,\sigma_0,\sigma_2)\, ,
\end{cases}
\end{equation*}
where 
\begin{equation}\label{eq:minimization}
L(x,\sigma_0,\sigma_2) =   \min_{\footnotesize \begin{array}{c}s_0\sigma_0 + s_1 \sigma_1 + s_2 \sigma_2 = x\\
s_0,s_1,s_2\geq 0\\
s_0 + s_1 + s_2 \leq t \end{array}} \left ( \frac{|\sigma_1|^2}{2} +  s_0 + s_1 + s_2 \right )  \, .
\end{equation}
\end{prop}
\begin{proof}
Take an arbitrary curve with initial velocity $\sigma_0$ and final velocity $\sigma_N$ (not to be changed). By re-arranging the order of intermediary paths $(s_1,\sigma_1),\dots (s_{N-1},\sigma_{N-1})$, we can merge all flat sections with zero velocity into a single one. Then, two adjacent sections with non-zero velocities (say $\sigma_1$ and $\sigma_2$) can be merged into a single one over the same time interval $s_1+s_2$ with average velocity $\overline{\sigma}_1$ by reducing the cost using the following convexity inequality, see also Figure \ref{fig:arbitrary}: 
\begin{align*}
 \left |\dfrac{s_1 \sigma_1 + s_2 \sigma_2}{s_1 + s_2}\right |^2 &\leq  \dfrac{s_1}{s_1 + s_2} |\sigma_1|^2 + \dfrac{s_2}{s_1 + s_2} |\sigma_2|^2\\
 |\overline{\sigma}_1|^2 &\leq |\sigma_1|^2 + |\sigma_2|^2
\end{align*}
\end{proof}

In the sequel we choose not to index the flat section with zero velocity for the sake of clarity.


In the one and two-dimensional cases, we can reduce further the complexity of the previous minimization problem. 

\begin{prop}[One-dimensional case]\label{prop:1D}
Assume that the spatial dimension is $n=1$. Then, 
\begin{equation*}
L(x, \sigma_0, \sigma_2) = \min (L(x, \sigma_0, \sigma_0), L(x, \sigma_2, \sigma_2))\, .
\end{equation*}
We denote in short $L(x, \sigma) = L(x, \sigma, \sigma)$. 
The values of $L(x, \sigma)$ are depicted in Figure \ref{fig:fund}. \\
It is noticeable that the minimization problem reduces to finding at most one non-zero time among $s_0,s_1$ and $s_2$.  
\end{prop}

\begin{prop}[Two-dimensional case]
Assume that the dimension is $n=2$. Then, 
\begin{equation*}
L(x, \sigma_0, \sigma_2) = \min \left (L(x, \sigma_0, \sigma_0), L(x, \sigma_2, \sigma_2), \tau_0 + \tau_2\right )\, ,
\end{equation*}
where the last option is the (unique)  decomposition of $x = \tau_0\sigma_0 + \tau_2 \sigma_2$ in the set $\{ s_0\geq 0, s_2\geq 0, s_0 + s_2\leq t \}$ to be considered only if $\spn(\sigma_0,\sigma_2) = \R^2$ and $x$ is reachable in time less than $t$. \\
It is noticeable that the minimization problem reduces to at most two non-zero times among $s_0,s_1,s_2$. There is no simple general formulation for $L(x,\sigma)$. However, some values are depicted in Figure \ref{fig:fund 2D}.
\end{prop}


%

\subsection{Computation of $L(x,\sigma_0,\sigma_2)$ in the one-dimensional case}

\label{sec:kernel1D}

\begin{lem}[The case $\sigma_0 \neq \sigma_2$]\label{lem:55}
Suppose that $\sigma_0 \neq \sigma_2$, then we cannot have both $s_0>0$ and $s_2>0$. Consequently, we have:
\begin{equation}\label{eq:L = min}
L(x, \sigma_0, \sigma_2) = \min (L(x, \sigma_0, \sigma_0), L(x, \sigma_2, \sigma_2))\, .
\end{equation}
\end{lem}

\begin{proof}
We notice that the option $s_2=0$ is equivalent to compute $L$ in the case $\sigma_0 = \sigma_2$ by identification of $s_0$ and $s_0+s_2$ in the two minimization problems. Hence, \eqref{eq:L = min} is a direct consequence of the reduction to $s_0 = 0$ or $s_2 = 0$. Next, we turn to the justification of this point. The basic idea is that it is always advantageous to spend more time with the fastest velocity. 

Assume by contradiction that both $s_0$ and $s_2$ are positive. Let $y = s_0 \sigma_0 + s_2\sigma_2 =  x - s_1\sigma_1$, and assume that $y\geq 0$ without loss of generality. Assume also that $\sigma_0$ is the largest velocity: $\sigma_0>\sigma_2$. It must be nonnegative, otherwise $y$ would be negative. Then, we have the alternative decomposition: $y = \left (s_0 + \frac{\sigma_2}{\sigma_0} s_2\right ) \sigma_0$ which is admissible with a better score, because $s_0 + \frac{\sigma_2}{\sigma_0} s_2 < s_0 + s_2\leq   t-s_1$, and $s_0 + \frac{\sigma_2}{\sigma_0} s_2$ is nonnegative because $y$ and $\sigma_0$ have the same sign. Hence, it is better  to consider $s_2 = 0$. 
\end{proof}

\begin{lem}[The case $\sigma_0 = \sigma_2$]
If the two velocities coincide, then $L(x,\sigma_0)$ is given by the values in Figure \ref{fig:fund}. 
\end{lem}

\begin{proof}

%
Recall that $L(x,\sigma_0)$ is given by the following minimization problem:
$$L(x,\sigma_0) = \min_{\footnotesize \begin{array}{c}s_0 \sigma_0 + s_1 \sigma_1 = x\\
s_0,s_1\geq 0\\
s_0 + s_1 \leq t \end{array}} \left ( \frac{|\sigma_1|^2}{2} +  s_0 + s_1 \right ).$$
We distinguish between several cases:

\medskip 
\noindent {\bf \# Case 1: $s_1 = 0$.}
\medskip

This is possible only if $s_0 = \frac{x}{\sigma_0} \in [0,t]$.  The value of the minimum is $L(x,\sigma_0) = \frac{x}{\sigma_0}$. 

\medskip 
\noindent {\bf \# Case 2: $s_0 = 0$.}
\medskip

Here, we have $\sigma_1 = x/s_1$. Thus the value of the minimum is
\begin{equation}
\min_{s \in [0,t] } \left(  \dfrac{\vert x \vert^2}{2 s ^2} + s  \right) = \begin{cases}
\dfrac32 |x|^{2/3}\, , & \text{ if } |x|\leq t^{3/2}\medskip\\
\dfrac{\vert x \vert^2}{2 t^2} + t\, , & \text{ if } |x|\geq t^{3/2} 
\end{cases}\label{eq:candidate 3}\,.
\end{equation}

\medskip 
\noindent {\bf \# Case 3: $s_0 + s_1 = t$ with both $s_0>0$ and $s_1>0$.}
\medskip

The problem is equivalent to minimize
$
\frac12\left |\frac{x - t \sigma_0}{s_1} + \sigma_0 \right |^2  +  t .
$
There is a critical interior point at $s_1^* = t - \frac{x}{\sigma_0}$ only if $\frac{x}{\sigma_0}\in (0,t)$, as in Case \#1. However the resulting value, simply $t$, is worse.

\medskip 
\noindent {\bf \# Case 4: $(s_0,s_1)$ is an interior point of the triangle.}
\medskip

The problem is equivalent to find a critical point of $\frac{|x - s_0 \sigma_0 |^2}{2 s_1^2} +  s_0 + s_1$. 
The first order condition is
\begin{equation*}
\begin{cases}
s_1^2 = \sigma_0 (x - s_0 \sigma_0) \medskip \\
s_1^3 = \vert x - s_0 \sigma_0 \vert^2
\end{cases} \quad \Longleftrightarrow \quad
\begin{cases}
s_0 = \dfrac{x}{\sigma_0} - \vert \sigma_0 \vert^2\medskip \\
s_1 = \vert \sigma_0 \vert^2,
\end{cases}
\end{equation*}
Therefore, under the conditions to be an interior point, in particular,
\begin{equation*}
0< \vert \sigma_0 \vert^2 < t, \qquad 0 <  \frac{x}{\sigma_0} < t,
\end{equation*}
the candidate for the minimum value, $\frac{\vert \sigma_0 \vert^2}{2} + \frac{x}{\sigma_0}$.  Again, this is worse than  in Case \#1.

As a conclusion, three possible candidates remain after the distinction of cases, namely
\begin{equation}
L(x,\sigma_0) = \begin{cases}
\dfrac{x}{\sigma_0} \, , & \text{ if } 0\leq \dfrac{x}{\sigma_0} \leq t \medskip\\ 
\dfrac32 |x|^{2/3}\, , & \text{ if } |x|\leq t^{3/2} \medskip\\
\dfrac{\vert x \vert^2}{2 t^2} + t\, , & \text{ if } |x|\geq t^{3/2}.\\
\end{cases}\label{eq:candidates all}
\end{equation}
The final score of the minimization procedure is depicted in Figure \ref{fig:fund}.
\end{proof}


\subsection{Partial computation of $L(x,\sigma_0,\sigma_2)$ in the two-dimensional case}

\begin{lem}
It is possible to choose either $s_0=0$, or $s_1=0$, or $s_2=0$ in the minimization problem \eqref{eq:minimization}.
\end{lem}
\begin{proof}
On the one hand, the case where $\sigma_0$ and $\sigma_2$ are colinear can be dealt with a one-dimensional argument as in the proof of Lemma \ref{lem:55}. 

On the other hand, assume that $\spn(\sigma_0,\sigma_2)= \R^2$ and that the minimum point satisfies $s_0>0$, $s_1>0$ and $s_2>0$. 

The minimization problem \eqref{eq:minimization} admits the following system of first order conditions:
\begin{equation*}
\begin{cases}
\sigma_1 = \mu s_1\\
1 = \mu\cdot \sigma_0 + \lambda_0 - \eta\\
1 = \mu\cdot \sigma_1 + \lambda_1 - \eta\\
1 = \mu\cdot \sigma_2 + \lambda_2 - \eta
\end{cases}
\end{equation*}
where the multipliers $\lambda_i$ and $\eta$ are non-negative, and vanish if the associated constraint (resp. $s_i\geq 0$ and $s_0 + s_1 + s_2 \leq t$) is not saturated. 

Hence, if $s_i>0$ for all $i=0,1,2$,  we necessarily have:  $\sigma_1\cdot \sigma_0 = |\sigma_1|^2 = \sigma_1\cdot \sigma_2 $. As a consequence, we have $\sigma_1 = (1 - \alpha)\sigma_0 + \alpha \sigma_2$, where $\alpha = \frac{\sigma_0\cdot (\sigma_0 - \sigma_2)}{|\sigma_0 - \sigma_2|^2}$, and alternatively,
\begin{equation*}
\begin{cases}
\sigma_0 = \dfrac{1}{1-\alpha} \sigma_1 + \left ( 1 -  \dfrac{1}{1-\alpha} \right ) \sigma_2 \medskip \\
\sigma_2 = \left ( 1 -  \dfrac{1}{\alpha} \right ) \sigma_2 + \dfrac{1}{\alpha} \sigma_1 
\end{cases}
\end{equation*}
Then, depending on the value of $\alpha$, either $\alpha \in [0,1]$ or $\alpha \in (-\infty, 0)$, or $\alpha\in (1,\infty)$, we can rearrange $x = (s_0 + (1-\alpha)s_1) \sigma_0 + (\alpha s_1 + s_2)\sigma_2$ (or the other way around), without changing the minimizing value. It remains to notice than at least one of the rearrangement is admissible (the latter one if $\alpha\in [0,1]$ for instance). 
\end{proof}

The case $s_1 = 0$ is only admissible if $x$ belongs to the triangle $\{ s_0 \sigma_0 + s_2 \sigma_2\; |\; s_0,s_2\geq 0\, , \; s_0 + s_2 \leq t \}$. The resulting value must be compared with the other options.

The case $s_1>0$ is investigated below. An explicit formulation as in the one-dimensional case seems beyond tractable computations. However, we provide some elements that indicate the complexity of the problem. 

From now on, we assume without of generality that $s_2 = 0$, and compute $L(x,\sigma_0)$ accordingly.
The minimization problem \eqref{eq:minimization} can be reformulated as follows:
\begin{equation}\label{eq:min 2D}
\min_{s_0,s_1\geq 0\, , \; s_0 + s_1 \leq t} \dfrac{|x - s_0 \sigma_0|^2}{2s_1^2} + s_0 + s_1 = \min_{s_0,s_1\geq 0\, , \; s_0 + s_1 \leq t} \dfrac{|x|^2 - 2 s_0 |x||\sigma_0|\cos \theta + s_0^2 |\sigma_0|^2 }{2 s_1^2} + s_0 + s_1\, ,
\end{equation}
where $\cos \theta = \frac{x\cdot \sigma_0}{|x||\sigma_0|}$.
It is not a convex function with respect to $(s_0,s_1)$, so we have to discuss the properties of its critical points further.

\medskip 
\noindent {\bf \# Step 1: Interior critical points.}
\medskip

Interior critical points are subject to the following equations:
\begin{equation}\label{eq:1st order cond}
\begin{cases}
s_0 |\sigma_0|^2 - |x||\sigma_0|\cos \theta + s_1^2 = 0\medskip \\
- \left( |x|^2 - 2 s_0 |x||\sigma_0|\cos \theta + s_0^2 |\sigma_0|^2 \right )+ s_1^3 = 0
\end{cases}
\end{equation}
The equation for $s_1$ is the following quartic one:
\begin{equation*}
Q(s_1) =    s_1^3   - \dfrac{s_1^4}{  |\sigma_0|^2} =   |x|^2 (\sin \theta)^2 \, .
\end{equation*}
The maximum of $Q(s_1)$ over $s_1\in \R_+$ is attained at $\bar s = \frac34 |\sigma_0|^2$. Therefore, the equation admits two positive roots if and only if
\begin{equation}\label{eq:66}
\dfrac{|\sigma_0|^3}{|x|} > \dfrac{16}{\sqrt{27}}|\sin \theta|\, . 
\end{equation}
The largest one does not correspond to a minimizer as the Hessian at the critical point is not positive: 
\begin{equation*}
\mathrm{Hess} = \begin{pmatrix}
\frac{|x|^2}{s_1^2} & \frac{2}{s_1} \medskip\\
\frac{2}{s_1} & \frac{3}{ s_1} \end{pmatrix} \, , \quad \det(\mathrm{Hess}) = \frac{4}{s_1^3} ( \bar s - s_1)\, .  
\end{equation*}
However, the smallest one corresponds to a local minimizer with a positive-definite Hessian. We select this one in the sequel as a possible candidate. Further conditions must be checked.

In order to meet the constraint $s_0 > 0$, the condition $s_1^2 <  |x||\sigma_0|\cos \theta$ must be fulfilled \eqref{eq:1st order cond}. It is so if and only if $\cos \theta >0$, and one of the two following conditions is verified:   
\begin{align}
&\begin{cases}
\text{either}& Q\left (\left ( |x||\sigma_0|\cos \theta\right )^{1/2}\right ) > |x|^2 (\sin \theta)^2 \\
\text{or}& Q'\left (\left ( |x||\sigma_0|\cos \theta\right )^{1/2}\right ) < 0\, .
\end{cases}\medskip\nonumber\\
\label{eq:cond s0}
\Leftrightarrow&\begin{cases}
\text{either}& \dfrac{|\sigma_0|^{3/2}(\cos\theta)^{3/2}}{|x|^{1/2}}  - (\cos \theta)^2 >    (\sin \theta)^2\; \Leftrightarrow\;\dfrac{|\sigma_0|^{3}}{|x|} > (\cos\theta)^{-3}\\
\text{or}& (\bar s)^2 < |x||\sigma_0| (\cos\theta) \; \Leftrightarrow\; \dfrac{|\sigma_0|^{3}}{|x|} < \dfrac{16}{9} (\cos \theta) 
\end{cases}
\end{align}
Note that the second condition is not compatible with \eqref{eq:66} unless $|\tan \theta| < \frac{1}{\sqrt{3}}$, {\em i.e.} $|\theta|< \frac{\pi}{6}$. In the latter case, either one the two conditions in \eqref{eq:cond s0} is fulfilled because $\cos \theta > \frac{\sqrt{3}}{2}$.
Note also that the first condition in \eqref{eq:cond s0} naturally implies \eqref{eq:66} because $|\cos \theta|^{-3} > \frac{16}{\sqrt{27}}|\sin \theta|$ for all $\theta$. Finally, we can summarize the necessary conditions for finding a local minimizer in the interior $(s_0>0,s_1>0,s_0+s_1<t)$ as follows:
  \begin{equation*}
  \begin{cases}
  \text{either}& \theta \in \left (-\dfrac\pi 6,\dfrac\pi 6\right )\mod 2\pi\,,\quad \text{and} \quad \dfrac{|\sigma_0|^3}{|x|} >\dfrac{16}{\sqrt{27}}|\sin \theta| \\
  \text{or}& \theta \in \left (- \dfrac\pi 2,-\dfrac\pi 6\right ) \cup \left (\dfrac\pi 6, \dfrac\pi2\right )\mod 2\pi\,,  \quad \text{and} \quad \dfrac{|\sigma_0|^3}{|x|} > (\cos\theta)^{-3} 
  \end{cases}
  \end{equation*}
The last condition to check, that is $s_0 + s_1 < t$ seems not computationnally tractable, but this is only a condition on the range of $t$ which does not appear in the previous computations. 

To complete the picture, we examine the behavior of the minimization problem at the boundary $\{s_0 = 0\}$. Indeed, recall that the boundary $\{s_1 = 0\}$ plays a particular role due to the constraint on the decomposition $x = s_0 \sigma_0 + s_2\sigma_2$. Moreover, the boundary  
 $\{s_0 + s_1 = t\}$ would provide a time-dependent condition that could complicate the picture. 

\medskip 
\noindent {\bf \# Step 2: Behavior near $\{s_0 = 0\}$.}
\medskip

This case coincides with \eqref{eq:candidate 3}. We assume that $|x|< t^{3/2}$ so that there is a critical time $s_1 = |x|^{2/3}\in (0,t)$. The derivative in the transverse direction is 
\begin{equation*}
\dfrac{\partial }{\partial s_0}\bigg|_{s_0 = 0} \left ( \dfrac{|x - s_0 \sigma_0|^2}{2s_1^2} + s_0 + s_1\right ) = - \dfrac{|x||\sigma_0|\cos\theta}{s_1^2} + 1 = - \dfrac{|\sigma_0|\cos\theta}{|x|^{1/3}} + 1\, .
\end{equation*}
Hence, the condition $|\sigma_0|^3/|x| < (\cos\theta)^{-3}$ guarantees that it is a local minimizer.

Therefore, we have a clear dichotomy in the case $\theta \in \left (- \frac\pi 2,-\frac\pi 6\right ) \cup \left (\frac\pi 6, \frac\pi2\right )\mod 2\pi$ (and $t$ is large enough): either $|\sigma_0|^3/|x| < (\cos\theta)^{-3}$, then there is a local minimizer at $\{s_0 = 0\}$, but none in the interior. There is a transition for $|\sigma_0|^3/|x| > (\cos\theta)^{-3}$ as the local minimizer moves to the interior of the triangle.

The picture is more intricate when $\theta \in \left (-\frac\pi 6,\frac\pi 6\right )\mod 2\pi$ since there could be a pair of local minimizers, in the interior and at the boundary, precisely when 
\begin{equation*}
\dfrac{16}{\sqrt{27}}|\sin \theta| <  \dfrac{|\sigma_0|^3}{|x|}  < (\cos\theta)^{-3}\, . 
\end{equation*} 
In fact, it can be proven that this corresponds exactly to the case where there is a second interior critical point which is a saddle point. There is no clear order between the two minimizers: both can be global minima depending on the parameters (numerical check). 

The previous conditions are summarized in Figure \ref{fig:fund 2D} for large time.

\begin{figure}
\begin{center}
\includegraphics[width = .6\linewidth]{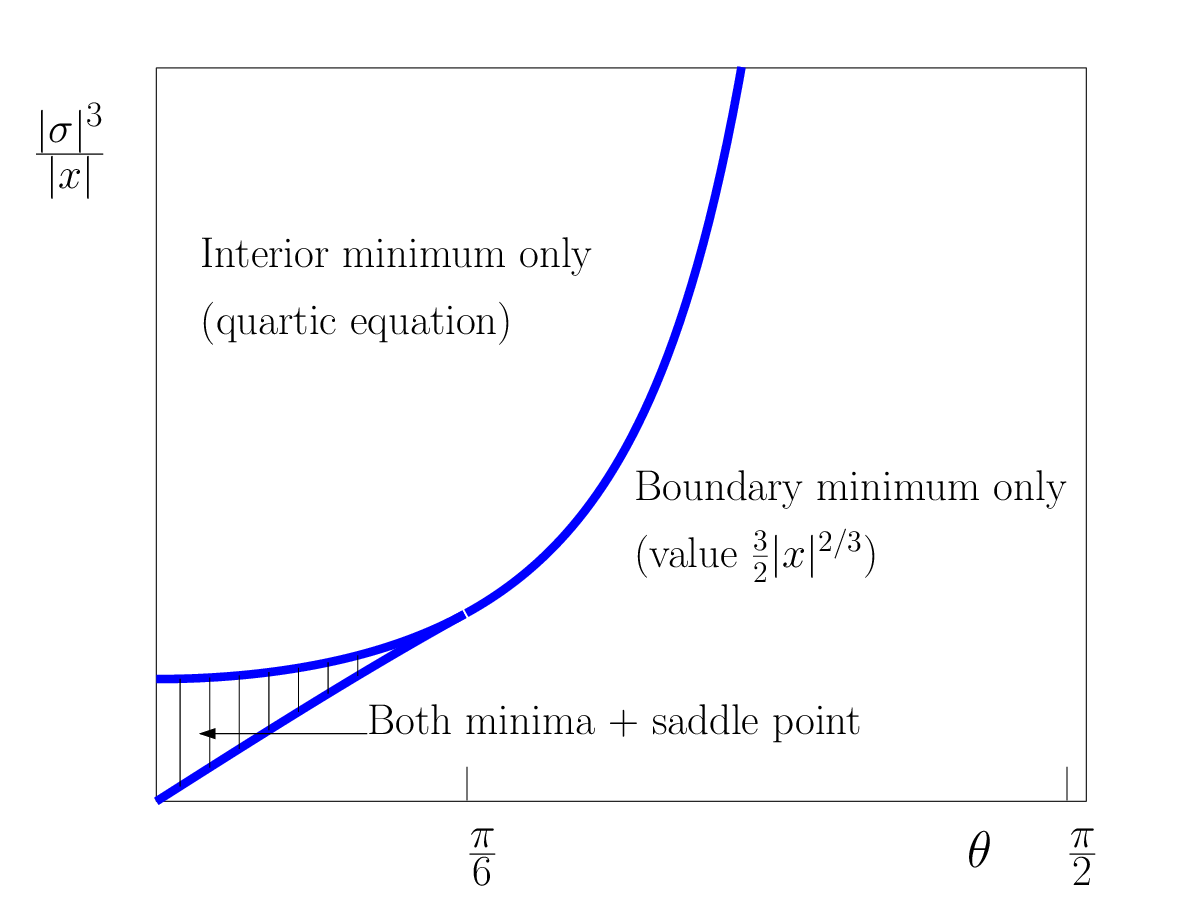}
\end{center}
\caption{Partial information about $L(x,\sigma)$ in the case of large time $t$ (so that the minimal value is independent of the time variable). Here, $\theta$ denotes the angle between $x$ and $\sigma$. There exists a zone for grazing angle (below $\pi/6$) and relatively small speed $|\sigma|\lesssim |x|^{1/3}$, but not too small (shaded area) such that two local minima co-exist in the minimizing problem \eqref{eq:min 2D}, separated by a saddle point. The picture is symmetric with respect to $\theta = 0$.}\label{fig:fund 2D}
\end{figure}

\section{Rate of acceleration in kinetic reaction-transport equations}\label{sec:acc}

We consider the following kinetic reaction-transport problem \cite{hadeler_reaction_1999,
schwetlick_travelling_2000,
cuesta_traveling_2012,
bouin_propagation_2015}, in one dimension of space $n = 1$:
\begin{equation}\label{eq:kinKPPR}
\begin{cases}
\eps \left( \partial_t f^\eps(t,x,v) + v \partial_x f^\eps(t,x,v) \right)=   M_\eps(v) \rho^\eps(t,x) - f^\eps(t,x,v)   + r \rho^\eps(t,x) \left( M_\eps(v) -  f^\eps(t,x,v) \right)\medskip\\
f^\eps(0,x,v) = \ind{\{x\leq 0\}}M_\eps(v)
\end{cases}
\end{equation} 

It is obtained from the problem \eqref{eq:kinreac} with $\eps = 1$ in the  scaling regime: $\left(\frac{ t}{\eps},\frac{ x}{\eps^{3/2}},\frac{ v}{\eps^{1/2}} \right)$. 
 

We build on the previous results in the present work to investigate the limit of $u^\eps = -\eps\log f^\eps$, following the lines of the celebrated works by Freidlin \cite{freidlin_geometric_1986}, Evans and Souganidis \cite{evans_pde_1989}, and Barles, Evans and Souganidis \cite{barles_wavefront_1990}. 

Let $\Upsilon$ be the rate of expansion of the spatial density of population:
\begin{equation}\label{eq:Upsilon}
\Upsilon =  \frac{\left( (2/3) r \right)^{3/2}}{1+r}\, .
\end{equation}


\begin{thm}[Weak propagation result]\label{th:propagation}
We have the following characterization of the accelerating front in terms of the spatial density:
\begin{equation}\label{eq:prop rasult}
\begin{cases}
\displaystyle \lim_{\eps\to 0} \rho^\eps(t,x) = 0 & \text{locally uniformly on $\mathrm{Int}\:\{(t,x): x> \Upsilon t^{3/2}\}$},\medskip\\
\displaystyle \lim_{\eps\to 0} \rho^\eps(t,x) = 1 & \text{locally uniformly on $\mathrm{Int}\:\{(t,x): x< \Upsilon t^{3/2}\}$}.
\end{cases}
\end{equation}
\end{thm}

The rest of this section is devoted to the proof of this theorem.

The immediate consequence of the non-linear reaction term is the following comparison result \cite{cuesta_traveling_2012}:
\begin{equation}\label{eq:estf0}
\forall (t,x,v) \in \R_+\times \R \quad 0\leq f^\eps(t,x,v)\leq  M_\eps(v).
\end{equation}
This readily implies the following estimate on the putative limiting function $u$:
\begin{equation*}
\forall (t,x,v) \in \R_+ \times \R  \times \R  \quad u(t,x,v) \geq \frac{\vert v \vert^2}{2} \, . 
\end{equation*}

Indeed, the latter obstacle condition  supplements the definition of the limiting problem satisfied by the function $u=\lim u^\eps$. To make it clear, we update the pair of Definitions \ref{def:subsol intro} and \ref{def:supersol intro} in the following way: $u$ is a viscosity solution of the limiting obstacle problem if  the two following sets of criteria are satisfied:
\begin{defi}[Sub-solution]\label{def:subsol}
Let $\underline{u}_0$ be a continuous function, and $T>0$. An upper semi-continuous function $\underline{u}$ is a \textbf{viscosity sub-solution} of  the limiting obstacle problem on $(0,T) \times \R^{2}$ with initial data $\underline{u}_0$ if:\medskip \\
\noindent{\em (i)} $\underline{u}(0+,\cdot,\cdot) \leq  \underline{u}_0 $.\medskip \\
\noindent{\em (ii)} It satisfies the constraint
$ \underline{u}(t,x,v) -  \minp \underline{u}(t,x) - \frac{\vert v\vert^2}{2} \leq 0$.
\medskip\\
{\noindent{\em (iii)}}
For all pair of test functions $(\phi,\psi) \in \mathcal{C}^1\left( (0,T) \times \R^{2} \right) \times \mathcal{C}^1\left( (0,T) \times \R \right) $, if $(t_0,x_0,v_0)$ is such that both $\underline{u}(\cdot,\cdot,v_0) - \phi(\cdot,\cdot,v_0)$ and $  \minp \underline{u}  - \psi$ have a local maximum at $(t_0,x_0)$ with $t_0>0$, then
\begin{equation}\label{eq:S1}
\begin{cases}
 \partial_t \phi(t_0,x_0,v_0) + v_0 \cdot \nabla_x \phi(t_0,x_0,v_0) - 1 \leq 0 & \displaystyle \text{if}\quad  \underline{u}(t_0,x_0,v_0)> \frac{|v_0|^2}{2}, \medskip \\
\partial_t \psi(t_0,x_0) \leq -r & \text{if} \quad \minp \underline{u}(t_0,x_0)>0. 
\end{cases}
\end{equation}
\end{defi}

\begin{defi}[Super-solution]\label{def:supersol}
Let $\overline{u}_0$ be a continuous function, and $T>0$. A lower semi-continuous function $\overline{u}$ is a \textbf{viscosity super-solution} of the the limiting obstacle problem on $(0,T) \times \R^{2}$  with initial data $\overline{u}_0$ if:
\medskip\\
{\noindent{\em (i)}} 
$\overline{u}(0+,\cdot,\cdot) \geq \overline{u}_0$.\medskip \\
{\noindent{\em (ii)}} 
$\overline{u}(t,x,v) \geq \frac{\vert v \vert^2}{2}$\medskip
\\
{\noindent{\em (iii)}}
For all pair of test functions $(\phi,\psi) \in \mathcal{C}^1\left( (0,T) \times \R^{2} \right) \times \mathcal{C}^1\left( (0,T) \times \R \right) $, if $(t_0,x_0,v_0)$ is such that both $\overline{u}(\cdot,\cdot,v_0) - \phi(\cdot,\cdot,v_0)$ and $  \minp \overline{u}   - \psi $ have a local minimum  at $(t_0,x_0)$ with $t_0>0$, then 
\begin{equation*}
\begin{cases}
\displaystyle\partial_t \phi(t_0,x_0,v_0)  + v_0 \cdot \nabla_x \phi(t_0,x_0,v_0) - 1 \geq 0 & \displaystyle \text{if} \quad \overline{u}(t_0,x_0,v_0) - \minp  \overline{u}(t_0,x_0) - \dfrac{\vert v_0\vert^2}{2} < 0, \medskip \\
\partial_t  \psi(t_0,x_0) \geq -r  &  \text{if} \quad \aminp \overline{u} (t_0,x_0) = \left\lbrace 0 \right\rbrace .
\end{cases}
\end{equation*}
\end{defi}


In contrast with the linear case $r=0$ \eqref{eq:limit}, we do not provide here  an informal description of the limiting problem, to avoid any possible ambiguity. Nonetheless, the pair of Definitions \ref{def:subsol} and \ref{def:supersol} completely defines the limiting problem, as it comes with a comparison principle as in the linear case $r=0$. 

For the purpose of dealing with spatially unbounded solutions, as for the fundamental solution \eqref{eq:candidates intro}, we need some extension of the previous assumption \eqref{eq:v2 plus borne} to quadratic growth in the $x$ variable, see Appendix  \ref{sec:unbded}.

\begin{thm}[Comparison principle]\label{theo:compobs}
Let $\underline{u}$ be a viscosity sub-solution in the sense of Definition \ref{def:subsol} and $\overline{u}$ be a viscosity super-solution in the sense of Definition \ref{def:supersol}  with continuous initial data    $\underline{u}_0 \leq \overline{u}_0$. Assume that $\underline{u}$ and $\overline{u}$ verify the growth condition \eqref{eq:growth condition},
\begin{equation} \label{eq:growth condition sec 6}
\frac1A |v|^2 - A|x|^2 - A \leq \underline{u}(t,x,v), \overline{u}(t,x,v) \leq A |v|^2 + A|x|^2 + A\, ,
\end{equation}
for some $A>1$.
Then $\underline{u} \leq \overline{u}$ on $(0,T) \times \R^{2}$.
\end{thm}

The proof in Section \ref{sec:Comp} -- extended in Appendix \ref{sec:unbded} to quadratic growth in the spatial variable -- goes the same, except that the test functions associated with the sub-solution can only be used under additional conditions on $\underline{u}$ and $\minp \underline{u}$ \eqref{eq:S1}. 
Nevertheless, we immediately get the correct comparison if, either $\underline{u}(t_0,x_0,v_0)\leq  |v_0|^2/2$ (because $\overline{u}(t_0,x_0,v_0)\geq  |v_0|^2/2$, see Definition \ref{def:supersol}(ii)), or  $\minp \underline{u}(t_0,x_0)\leq 0$ (because $\underline{u}(t_0,x_0,v_0) \leq \minp  \underline{u}(t_0,x_0) + |v_0|^2 /2  \leq |v_0|^2/2 \leq \overline{u}(t_0,x_0,v_0)$). The technical details are left to the reader. 

\bigskip 

Similarly, our convergence result still holds, if the initial data $u_0$ has at most quadratic growth in the spatial variable. As it is not the case for the specific initial data associated with \eqref{eq:kinKPPR}, namely $u_0 = \mathbf{0}_{y\leq 0} + \frac{|v|^2}{2}$, the following result is stated for a different class of initial data. We shall see later on that some suitable approximation enables to recover our singular (infinite valued) initial data, thanks to a precise knowledge of the variational representation formula.

\begin{thm}[Convergence]\label{HJlimit:obs}
Assume that  $\u_0$ is continuous and satisfies (\ref{eq:growth condition sec 6}). 
Let $\u^\eps = -\eps \log f^\eps$ be the logarithmic transformation of the solution of \eqref{eq:kinKPPR}, with the initial data $\u^\eps(0,\cdot) = u_0$. Then,  $\u^\eps$ converges locally uniformly towards $u$  as $\eps\to 0$, where $u$ is the unique viscosity solution of the limiting obstacle problem in the sense of Definitions \ref{def:subsol} and \ref{def:supersol} with initial data $\min\left(u_0,\minp u_{0} + |v|^{2}/2\right)$.
\end{thm}

We need to introduce the  representation formula of the limiting obstacle problem. For that purpose, given $t>0$, we define $\Gamma_0^t$ the set of piecewise linear curves on $[0,t]$, parametrized by $y\in \mathbb{R}$ the starting point, and $\sigma \in \Sigma_0^t$ the piecewise constant derivative. We subsequently define $\Theta_0^t$ the set of stopping times associated with curves $\gamma\in \Gamma_0^t$. A mapping  $\vartheta: \Gamma_0^t\to [0,t]$ is a stopping time if for all $\gamma, \hat{\gamma}\in \Gamma_0^t\times \Gamma_0^t$, such that $\gamma (s) = \hat{\gamma}(s)$ for all $s\in [\tau,t]$ and $\vartheta [\gamma]\geq \tau$, then $\vartheta [\gamma]= \vartheta [\hat{\gamma}]$.
Then, the candidate for the viscosity solution $u$ of the obstacle problem is the following solution of a two-players game (see the proof below, Step 4, for a discussion)
\begin{multline}\label{eq:obstacle variational}
\mathcal U(t,x,v) = \sup_{\vartheta\in \Theta_0^t} \quad \inf_{(y,\sigma)\in \mathbb{R}\times \Sigma_0^t} \left \{  \mA_{  \vartheta[\gamma]}^{t}[\sigma] + \frac{|\sigma(\vartheta[\gamma])|^2}{2}\ind{\vartheta[\gamma]>0}  + \left ( \mathbf{0}_{y\leq 0} +\frac{|\sigma_0|^2}{2}\right )\ind{\vartheta[\gamma]=0}   \right.\\ 
\left .\; \bigg| \; \gamma(s) = y + \int_0^s \sigma(s')\, ds'  \, , \; \gamma(t) = x  \,, \; \sigma_N = v \right \}\,,  
\end{multline}
where $\mathbf{0}_{y\leq 0} $ is the convex indicator function, taking value $0$ on $\{y\leq 0\}$ and $+\infty$ elsewhere.

As compared to the case $r=0$, the new action is defined in this section as follows, 
\begin{align}
\mA_s^t[\sigma] &= \frac12 \sum_{i=1}^N |\sigma_i|^2 + \sum_{i=0}^{N} (t_{i+1} - t_{i}) {\bf 1}_{ \sigma_i\neq 0} - r \sum_{i=0}^{N} (t_{i+1} - t_{i}) {\bf 1}_{ \sigma_i =  0}\,\label{eq:new action} 
\\
& =   \frac12 \sum_{i=1}^N |\sigma_i|^2 + (1+r)\sum_{i=0}^{N} (t_{i+1} - t_{i}) {\bf 1}_{ \sigma_i\neq 0} - r t.
\label{eq:new action bis} 
\end{align}
Alternatively speaking, any run  induces a running cost of $1$ per unit of time, except if it occurs at speed zero, in which case there is a (negative) running cost of $-r$ per unit of time.

\begin{thm} \label{th:nonlin}
Let $f^\eps$ be the solution of \eqref{eq:kinKPPR}, and $\u^\eps = -\eps \log f^\eps$. Then, we have $\mU(t,x,v) = \lim_{\eps\to0} u^\eps(t,x,v)$ for all $t>0$ and $(x,v)\in \R^2$. In addition, we have:
\begin{equation}\label{eq:mU}
\mU(t,x,0) = \max\left ( 0, \mu(t,x)\right)\, ,
\end{equation}
where 
\begin{equation}\label{eq:mumu}
\mu(t,x) = \begin{cases}
\dfrac32 |(1+r)x|^{2/3} - rt  & \text{ if } 0\leq x\leq (1+r)^{1/2}t^{3/2},\medskip\\
\dfrac{\vert x \vert^2}{2 t^2} + t  & \text{ if } x\geq (1+r)^{1/2}t^{3/2}. 
\end{cases}
\end{equation}
\end{thm}

\begin{rmq}[The Freidlin condition is not satisfied]\label{rm:rem}
The formula \eqref{eq:mU} essentially says that the value of the obstacle problem at $(t,x,0)$ is obtained by solving the problem without obstacle (resulting from the asymptotic analysis of \eqref{eq:kinKPPR} without the non-linear contribution $-r\rho^\eps f^\eps$), and truncating it above zero. This holds true for the classical Fisher-KPP equation as well as for other problems, see \cite{freidlin_geometric_1986,
evans_pde_1989}. A sufficient condition is usually referred to as Freidlin's N condition \cite{evans_pde_1989}. It happens that this condition is violated in our case for large $v$. Moreover,  the expression of $\mU(t,x,v)$ is not given by the truncation of the problem without obstacle, for large $v$ (details not shown for the sake of conciseness). However, our conclusion about the propagation result \eqref{eq:prop rasult} is not impacted since the value at $0$ coincides with the minimum value $\minp \mU(t,x)$ which carries enough information to detect the emergence of $\rho^\eps(t,x)$ for vanishing $\eps$. 
\end{rmq}

\begin{proof}
We follow the main lines of \cite{evans_pde_1989}. The main difficulty comes from the singularity at $t = 0$, where it is expected that $\mU(t,x,v)\to +\infty$ as $t\to 0$,  when $x>0$, because of the particular initial data which takes infinite value on $\mathbb{R}_+^*\times\mathbb{R}$. However, the variational formulation \eqref{eq:obstacle variational} enables to circumvent any such singularity at $t=0$.

\medskip

\noindent{\bf \# Step 1:  Quadratic bounds on $u^\eps$ for positive time.} 

\medskip

We begin by establishing rough estimates on $u^\eps$ which are compatible with \eqref{eq:growth condition sec 6}. 

\begin{prop}\label{prop:uepsestimate}
There exists $\eps_0>0$ such that for all $\eps < \eps_0$, for all $(t,x,v) \in \R_+^* \times \R \times \R$, one has the  estimate
\begin{equation*}
\frac{v^2}{2} + \eps \log\left ((2\pi \eps)^{1/2}\right ) \leq u^\eps(t,x,v) \leq \begin{cases} \displaystyle 
\frac{v^2}{2} + t - \eps \log\left (\frac t4\right ) 
& \text{ if $ x + (t/2)|v| \leq 0$},\medskip\\
\displaystyle \frac{v^2}{2} + t + \frac12 \left ( \left ( \frac{2x}{t} + |v|\right )_+ + 1 \right )^2 - \eps \log\left (\frac t2\right )   
& \text{ if $x + (t/2)|v| \geq 0$}.
\end{cases}
\end{equation*} 
\end{prop}

\begin{proof}
The lower bound is a direct consequence of the maximum principle \eqref{eq:estf0}. As a side effect, the  contribution $r\rho^\eps( M_\eps - f^\eps)$ is nonnegative, so that we can restrict to the linear problem with $r=0$ in order to obtain an upper bound for $u^\eps$ (a lower bound for $f^\eps$).

We proceed in two steps.  
Omitting the non-local term $M_\eps \rho^\eps$, we get that $f^\eps$ is bounded below by the damped free transport equation,
$f^\eps(t,x,v) \geq f^\eps(0,x-tv,v) e^{-t/\eps}$.
Hence, we have
\begin{equation*}
\rho^\eps(t,x) \geq  e^{-t/\eps} \frac{1}{(2\pi\eps)^{1/2}} \int_{\R} \ind{\{x\leq tw\}}   \exp\left(-\frac{w^2}{2\eps} \right) dw\\
\end{equation*}
Plugging this bound into the Duhamel formula, we deduce
\begin{align*}
f^\eps(t,x,v) &\geq M_\eps(v) \frac{1}{(2\pi\eps)^{1/2}} \int_0^t \left(  e^{-(t-s)/\eps} \int_{\R} \ind{\{x - sv \leq (t-s)w\}}
  \exp\left(-\frac{w^2}{2\eps} \right) dw \right)e^{-s/\eps} ds\nonumber\\
&= M_\eps(v) \frac{1}{(2\pi\eps)^{1/2}} e^{-t/\eps}  \int_0^t \int_{ \frac{x-sv}{t-s}  }^{+\infty} \exp\left(-\frac{w^2}{2\eps} \right) dw \, ds
\end{align*}
We restrict to intermediate times $s\in (0,t/2)$. We have $x - sv \leq x + (t/2)|v|$. Suppose the latter is nonpositive, then we have:
\begin{equation*}
f^\eps(t,x,v) \geq M_\eps(v)  e^{-t/\eps}  \int_0^{t/2} \frac12 \, ds =  \frac t4 M_\eps(v)  e^{-t/\eps} \, .
\end{equation*}
Suppose on the contrary that $ x + (t/2)|v|>0$, then we have 
$\frac{x - sv}{t-s} \leq \frac{x + (t/2)|v|}{t/2}$. We denote the latter expression by $W$, so that 
\begin{equation*}
f^\eps(t,x,v) \geq \frac t2 M_\eps(v)  e^{-t/\eps} \frac{1}{(2\pi\eps)^{1/2}}     \exp\left(-\frac{(W+1)^2}{2\eps} \right ) 
\end{equation*} 
We can even omit the (large) prefactor $(2\pi\eps)^{-1/2}$ when $\eps< \eps_0$ is small enough. Taking the logarithm on both sides concludes the proof.
\end{proof}
%
%

\medskip

\noindent{\bf \# Step 2:  Definition of appropriate sub-and super-solutions.} 

\medskip

Since we cannot handle arbitrary unbounded initial data currently, we wish to introduce two auxiliary problems approximating the true problem on $u^\eps$, and bearing quadratic growth at most -- in agreement with \eqref{eq:growth condition sec 6} and the analysis in Appendix \ref{sec:unbded}. 

Let $A>0$ and $\alpha>0$. We define an upper bound for $f^\eps$ (a lower bound for $u^\eps$) as follows: let $\hat{f}^\eps$ be the solution of the reaction-transport equation \eqref{eq:kinKPPR} associated with the initial data  
\begin{equation*}
\hat{f}^\eps(0,x,v) = \exp\left ( - \frac{\hat{u}_0(x,v)}{\eps} \right )\, , \quad \hat{u}_0(x,v) = A \chi (x/\alpha) + \frac{v^2}{2}  +   \eps_0 \log\left ((2\pi \eps_0)^{1/2}\right ), 
\end{equation*}
where $\chi (x) = 0$ if $x\leq 0$, $\chi (x) = 1$ if $x\geq 1$ and $\chi$ is affine between these two values. We clearly have $\hat{f}^\eps(0,x,v) \geq \ind{\{x\leq 0\}}M_\eps(v)$ by construction when $\eps < \eps_0$ is small enough. 

Let $\tau\in (0,1)$. We define a lower bound for $f^\eps$ (an upper bound for $u^\eps$) as follows: let $\check{f}^\eps$ be the solution of the reaction-transport equation \eqref{eq:kinKPPR} associated with an initial data 
\begin{equation*}
\check{f}^\eps(0,x,v) = \exp\left ( - \frac{\check{u}_0(x,v)}{\eps} \right ),
\end{equation*}
where $\check{u}_0$ is a continuous approximation (from above) of 
\begin{equation*}
\begin{cases} \displaystyle 
\frac{v^2}{2} + \tau - \eps_0 \log\left (\frac \tau4\right ) 
& \text{ if $ x + (\tau/2)|v| \leq 0$},\medskip\\
\displaystyle \frac{v^2}{2} + \tau + \frac12 \left ( \left ( \frac{2x}{\tau} + |v|\right )_+ + 1 \right )^2  - \eps_0 \log\left (\frac \tau2\right )  
& \text{ if $x + (\tau/2)|v| \geq 0$}.
\end{cases}
\end{equation*}
We have $\hat{f}^\eps(0,x,v) \leq f^\eps(\tau,x,v)$ by Proposition \ref{prop:uepsestimate}. Therefore, the comparison principle implies $\hat{f}^\eps(t,x,v) \leq f^\eps(t + \tau,x,v)$ for all time $t>0$. 

In summary, for all $t>\tau$, we have:
\begin{equation}\label{eq:comparison u hat check}
\hat{u}^\eps(t,x,v) \leq u^\eps(t,x,v) \leq \check{u}^\eps (t - \tau,x,v)\, .
\end{equation}


Both problems are well-prepared in the sense that the initial data $\hat{u}_0$ and $\check{u}_0$ do not depend on $\eps$. Moreover, both $\hat{u}_0$ and $\check{u}_0$ converge to the singular initial data  $\mathbf{0}_{y\leq 0} + \frac{v^2}{2}$ when $A\to +\infty$, and $\eps_0\to 0$, then $\tau\to 0$. 
Moreover, the corresponding solutions verify the quadratic bounds in \eqref{eq:growth condition sec 6} which are required for comparing viscosity sub- and super-solutions. 

%
%

\medskip

\noindent{\bf \# Step 3: Characterization of the limits $\hat{u}$ and $\check{u}$.} 

\medskip

Following the ideas of optimal control, as in \cite{evans_pde_1989}, we can formulate the solutions of the two problems $\hat{u}^\eps$ and $\check{u}^\eps$ as $\eps\to 0$. Indeed, it can be established that  the limit are solutions of the following variational problems. 


\begin{prop}[Solution of the variational problem with at most quadratic initial data] \label{prop:varobs}
Consider any initial data $u_{0}$ which is continuous and satisfies \eqref{eq:growth condition sec 6}. Let \begin{multline}
\label{eq:obstacle variational bis 0}
U(t,x,v) = 
  \sup_{\vartheta\in \Theta_0^t} \quad \inf_{(y,\sigma)\in \mathbb{R}\times \Sigma_0^t}\left \{  \mA_{  \vartheta[\gamma]}^{t}[\sigma] +  \frac{|\sigma(\vartheta[\gamma])|^2}{2}\ind{\vartheta[\gamma]> 0} +   u_{0}(y,\sigma_0)\ind{\vartheta[\gamma]=  0} 
   \right .\\
  \left .\; \bigg| \; \gamma(s) = y + \int_0^s \sigma(s')\, ds'   \, , \; \gamma(t) = x  \,, \; \sigma_N = v \right \}\,,
\end{multline}
Then $U$ is the unique viscosity solution, in the sense of Definitions \ref{def:subsol} and \ref{def:supersol}, of the limiting obstacle problem with   initial data $\min\left(u_0,\minp u_{0} + |v|^{2}/2\right)$. 
\end{prop}

\begin{proof}
By choosing the trivial stopping time $\vartheta\equiv t$, we immediately have 
\begin{equation}\label{eq:contraint Uhat}
U(t,x,v) \geq \frac{v^2}{2}\,.
\end{equation}

The rest of the  proof relies on the dynamic programming principle, as in Section \ref{sec:variational}, but for stopping times taking values in $[s,t]$: 
\begin{multline}
\label{eq:obstacle variational bis}
U  (t,x,v) = 
  \sup_{\vartheta} \inf_{(y,\sigma)} \left \{  \mA_{  \vartheta[\gamma]}^{t}[\sigma] +  \frac{|\sigma(\vartheta[\gamma])|^2}{2}\ind{\vartheta[\gamma]> s} +   U (s,y,\sigma_0)\ind{\vartheta[\gamma]=  s} 
   \right .\\
  \left .\; \bigg| \;  \gamma(s') = y + \int_s^{s'} \sigma(s'')\, ds''   \, , \; \gamma(t) = x  \,, \; \sigma_N = v \right \}\, .
\end{multline}

Let us define $U^*$ the upper semi-continuous envelope of $U$. We first check  (ii) in the definition of the sub-solution, that is, such that the constraint $U^*\leq \minp U^* + {v^2}/{2}$ is satisfied.  Let $\tau>0$, and $s = t - \tau$.
Let $\vartheta^*$ be a nearly optimal stopping time (we consider it to be optimal for the sake of conciseness). 
Let $\gamma$ such that $\sigma_0 = w$ on $(s,(s+t)/2)$ and $\sigma_1 = v $ on $[(s+t)/2,t]$. We distinguish between two cases. If $\vartheta^*[\gamma] \geq (s+t)/2$, then we have 
\begin{equation*}
U(t,x,v) \leq  \tau + \dfrac{v^2}{2}\,,
\end{equation*}
and since $U\geq 0$, we deduce that 
\begin{equation*}
U(t,x,v) \leq \tau + \minp U^*(t,x) + \dfrac{v^2}{2}\,.
\end{equation*}
On the contrary, if $\vartheta^*[\gamma] < (s+t)/2$, then using \eqref{eq:contraint Uhat}, we find that 
\begin{align*}
U(t,x,v) 
&\leq  \tau + \dfrac{v^2}{2} + \left ( \dfrac{w^2}{2} \ind{\vartheta[\gamma]> s} +   U (t - \tau,y,w)\ind{\vartheta[\gamma]=  s} \right )\\
& \leq \tau + \dfrac{v^2}{2} + U (t - \tau,y,w)\left ( \ind{\vartheta[\gamma]> s} +  \ind{\vartheta[\gamma]=  s}\right )  \\
&  \leq \tau + U (t - \tau,y,w) + \dfrac{v^2}{2}.
\end{align*}
By letting $\tau\to 0$, we find that $U^*$ verifies the parabolic constraint. 

Let now $(\phi, \psi)$ be a pair of test functions for $(U^*, \minp U^*)$  as in Definition \ref{def:subsol}(iii). 
Assume first that $U^{*}(t_{0},x_{0},v_{0}) >|v_0|^{2}/2$. 
We may choose the constant configuration $\sigma \equiv v$ in \eqref{eq:obstacle variational bis}, so as to obtain 
\begin{align*}
U(t ,x ,v) & \leq  \sup_{\vartheta\in \Theta}  \left( \tau- \vartheta[\gamma] +  \frac{|v|^2}{2}\ind{\vartheta[\gamma]> s} +   U (t-\tau, x-\tau v ,v)\ind{\vartheta[\gamma]=  s } \right) \\
& \leq  \max \left\{ \tau +  \frac{|v|^2}{2},\tau+   U^{*} (t-\tau, x-\tau v ,v) \right\}  
\end{align*}
in the neighbourhood of $(t_0,x_0,v_0)$. Taking the upper semicontinuous envelope, we obtain that 
\begin{equation*}
U^*(t_0,x_0,v_0)  \leq \tau +  \max \left\{    \frac{|v_{0}|^2}{2},   U  (t_{0} - \tau, x_{0} - \tau v_0 ,v_{0}) \right\}\,,
\end{equation*}
The second term is larger than the first one when $\tau$ is small enough since $U^{*}(t_{0},x_{0},v_{0}) >|v_0|^{2}/2$. 
Therefore we find:
\begin{equation*}
\phi(t_0,x_0,v_0) - \phi(t_0-\tau,x_0 - \tau v_0,v_0)
\leq 
U^*(t_0,x_0,v_0) - U^*(t_0-\tau,x_0 - \tau v_0,v_0) \leq \tau \end{equation*}
by definition of $\phi$. Dividing by $\tau$ and letting $\tau \to 0$ in the previous inequalities, 
we eventually obtain $\partial_t \phi + v_0\cdot \nabla_x \phi - 1 \leq 0$ at $(t_0,x_0,v_0)$. 

Next, as $U^*\leq \minp U^* + v^2/2$, we know that the minimum of $U^*$ is attained at $v = 0$. We have
\begin{equation}\label{eq:721}
 \psi(t_0,x_0) - \psi(t_0-\tau,x_0 ) \leq U^*(t_0,x_0,0) - U^*(t_0-\tau,x_0 ,0) . 
\end{equation}
Choose the step function $\sigma$, such that $\sigma  = 0$ on $(t-\tau,t-\delta) $, and $\sigma = v$ on $[t-\delta,t]$ with arbitrarily small $\delta$. We get
\begin{equation*}
U(t,x,v)  \leq  \sup_{\vartheta }  \left( \delta + \frac{v^2}{2} \ind{\vartheta[\gamma]\geq t-\delta} + \frac{v^2}{2} \ind{\vartheta[\gamma]\in (s,t-\delta)}  +U(t-\tau,x  ,0) \ind{\vartheta[\gamma]=  t -\tau}\right)\, ,  
\end{equation*}
in the neighbourhood of $(t_0,x_0,0)$. 
Taking the upper semicontinuous envelope, we obtain in particular that 
\begin{equation*}
U^*(t_0,x_0,0)  \leq  U^*(t_0-\tau,x_0  ,0)  \, ,  
\end{equation*}
Together with \eqref{eq:721}, we conclude that $\partial_t \psi(t_0,x_0) \leq 0$. 

This shows that $U^{*}$ is a subsolution. 

\bigskip

We continue with the criteria for the viscosity supersolution. Let $U_*$ be the lower semicontinuous envelope of $U$. Let $(\phi, \psi)$ be a pair of test functions for $(U_*, \minp U_*)$  as in Definition \ref{def:supersol}(ii). Let $(t^n, x^n, v^n)$ be a minimizing sequence in the neighbourhood of $(t_0,x_0,v_0)$ such that $U(t^n, x^n, v^n)$ converges to $U_*(t_0,x_0,v_0)$. Let $\tau > 0$, consider the trivial stopping time $\vartheta^n \equiv t^n - \tau$
and let $\sigma^n$ be a nearly optimal trajectory for \eqref{eq:obstacle variational bis}: 
\begin{equation*}
\frac1 n+ U(t^n, x^n, v^n) \geq  U(t^n-\tau , y^n, \sigma_0^n) + \mA_{t^n-\tau}^{t^n}[\sigma^n]\, ,
\end{equation*}
where $x^{n}=y^{n}+\int_{t^{n}-\tau}^{t^{n}}\sigma^{n}$. Assume that $U_*(t_0,x_0,v_0) <  \minp U_*(t_0, x_0)  +\frac12 |v_0|^2$. Let us recall, from the proof of Theorem \ref{th:hopf lax intro} in Section \ref{sec:variational}, that the optimal trajectory is constant beyond a certain range $n$, and accumulates around $v_0\neq 0$. Therefore, the discount factor $-r$  is not seen in \eqref{eq:new action}. Then, we can apply exactly the same arguments as in the proof  of Theorem \ref{th:hopf lax intro} to show that $\partial_t \phi(t_0,x_0,v_0) + v_0\cdot \nabla_x \phi(t_0,x_0,v_0) - 1 \geq 0$.

If $\aminp U_*(t_0,x_0) = \left\lbrace 0 \right\rbrace$, then the proof of $\partial_t \psi(t_0,x_0)\geq -r$ goes exactly as in that of Theorem \ref{th:hopf lax intro}, using the inequality obtained in \eqref{eq:obstacle variational bis} with the trivial stopping time $\vartheta^n \equiv t^n - \tau$, except that the modified action function provides the required $\partial_t \psi(t_0,x_0)\geq -r$ instead of $0$. We recall the main computation, omitting much of the details:
\begin{align*}
& \frac1n +  U(t^n, x^n, v^n) - \minp U_*(t_0, x_0)\\ 
& \geq \psi(t^n-\tau,y^n) -  \psi(t_0,x_0)  + \mA_{t^n-\tau}^{t^n}[\sigma^n] \\
&\geq \psi(t^n,x^n) -  \psi(t_0,x_0)  -\tau \partial_t \psi(t^n,x^n) - |x^n - y^n| \left | \nabla_x\psi(t^n,x^n) \right | + \mA_{t^n-\tau}^{t^n}[\sigma^n] \\
&\hspace{280pt}  - \tau\omega(\tau) - |x^n - y^n|\omega(x^n - y^n)  .
\end{align*}
We have \eqref{eq:new action}:
\begin{equation*}
\mA_{t^n-\tau}^{t^n}[\sigma^n] \geq \dfrac12 \max_{i=1..N} |\sigma_i^n|^2 - r\tau \,,  
\end{equation*}
and, just as previously \eqref{eq:estimate displ}--\eqref{eq:Young}
\begin{align*}
|x^n - y^n| \left | \nabla_x\psi(t^n,x^n) \right | 
&\leq 
\tau \max \left ( |\sigma_0^n|, \max_{i=1..N} |\sigma_i^n| \right ) \left | \nabla_x\psi(t^n,x^n) \right |\\
& \leq \tau \left ( |\sigma_0^n| + \max_{i=1..N} |\sigma_i^n| \right ) \left | \nabla_x\psi(t^n,x^n) \right | \\
&\leq  
 \tau   |\sigma_0^n|  \left | \nabla_x\psi(t^n,x^n) \right | + \dfrac12 \max_{i=1..N} |\sigma_i^n|^2 + \frac{\tau^2}2  \left | \nabla_x\psi(t^n,x^n) \right |^2  \,.
\end{align*}
We deduce as previously (but the $-r\tau$ additional contribution) that
\begin{align*}
& \frac1n + U(t^n, x^n, v^n)  - \minp U_*(t_0, x_0) \\
&\geq \psi(t^n,x^n) - \psi(t_0,x_0)  - \tau \partial_t \psi(t^n,x^n) -  \tau |\sigma_0^n|  \left | \nabla_x\psi(t^n,x^n) \right |  - \frac{\tau^2}2 \left\vert \nabla_x\psi(t^n,x^n) \right\vert^2 - r\tau  \\
&\hspace{280pt}  - \tau \omega(\tau)  - |x^n - y^n|\omega(x^n - y^n)\, .
\end{align*}
We conclude that $\partial_t \psi(t_0,x_0)\geq -r$ by letting $n\to +\infty$, then $\tau \to 0$ as in Section \ref{sec:variational}, where it was established that $|\sigma_0^n| $ gets arbitrarily small as $\tau \to 0$.  This concludes the proof of Proposition~\ref{prop:varobs}.
\end{proof}


We define $\hat{U}$ and $\check{U}$ from the representation formula \eqref{eq:obstacle variational bis 0} associated with the initial data $\hat{u}_{0}$ and $\check{u}_{0}$. 
These functions satisfy the hypotheses of Proposition \ref{prop:varobs} and Theorem \ref{HJlimit:obs}. Hence, we have $\check{u}^\eps \to \check{U}$ and 
 $\hat{u}^\eps \to \hat{U}$ as $\eps\to 0$.

\medskip

\noindent{\bf \# Step 4:  Identification of the limiting value $\mU(t,x,v)$.} 

\medskip

Recalling the comparison   $\hat{u}^\eps(t,x,v) \leq u^\eps(t,x,v)\leq \check{u}^\eps(t-\tau,x,v)$ \eqref{eq:comparison u hat check}, we deduce that 
\begin{equation*}
\begin{cases}
&\displaystyle\liminf_{\eps\to 0} u^\eps(t,x,v) \geq  \hat{U}(t,x,v)\medskip\\
&\displaystyle\limsup_{\eps\to 0} u^\eps(t,x,v) \leq  \check{U}(t-\tau,x,v)
\end{cases}
\end{equation*}
Based on the representation formula, one can easily prove that  $\hat{U}\to \mU$ as $A\to +\infty$ and $\alpha \to 0$, and  $\check{U}(\cdot-\tau,\cdot,\cdot)\to \mU$ as $\eps_0\to 0$, then $\tau\to 0$. We thus deduce  that 
\begin{equation*}
\lim_{\eps\to 0} u^\eps(t,x,v) = \mU(t,x,v)\, .
\end{equation*}

\bigskip

We aim to compute the value of $\mU(t,x,v)$ \eqref{eq:obstacle variational}.  This belongs to the class of two-players games \cite{evans_differential_1984,
evans_pde_1989}. Alice plays the stopping time strategy $\vartheta$. She wishes to maximize her reward. Bob plays the piecewise linear curve $\gamma$ parametrized by $(y,\sigma)$ on $(0, t]$.
He wishes to minimize his reward. 

We restrict to the final velocity  $\sigma_N = v = 0$ as the spreading of the population relies upon the value of $\minp \mU = \mU(t,x,0)$ only, see Remark \ref{rm:rem}. 

We denote by $\mu(t,x)$ the minimizing value in the absence of stopping time (alternatively speaking, Alice plays the trivial strategy $\vartheta\equiv 0$):
\begin{equation}\label{eq:mu 7}
\mu(t,x) = \inf_{(y,\sigma)} \left \{  \mA_{0}^{t}[\sigma]   + \left ( \mathbf{0}_{y\leq 0} +\frac{|\sigma_0|^2}{2}\right )   \right.\\ 
\left .\; \bigg| \; \gamma(s) = y + \int_0^s \sigma(s')\, ds'  \, , \; \gamma(t) = x  \,, \; \sigma_N = 0 \right \}\,,  
\end{equation}
It is a consequence of Section \ref{sec:kernel} that $\mu(t,x)$ is given as in \eqref{eq:mumu}.
Indeed, the problem with positive $r>0$ without saturation (hence, without obstacle) is equivalent to the problem $r=0$ under an appropriate change of unknown. In fact, the actions \eqref{eq:action-intro} and \eqref{eq:new action bis} are in one-to-one correspondance  by the change of time $t_0 = (1+r) t_r$, and accordingly the change of space $x_0 = (1+r)x_r$ (velocity is unchanged), up to the additive contribution $rt$.  Back to the minimal action starting from the origin $y=0$ (clearly the best choice here), that is \eqref{eq:candidate 3}, we recover \eqref{eq:mumu} by \eqref{eq:mu 7}:
\begin{equation*}
\mu(t,x) = \begin{cases}
\dfrac32 |(1+r)x|^{2/3} - rt  & \text{ if } 0\leq x\leq (1+r)^{1/2}t^{3/2},\medskip\\
\dfrac{\vert x \vert^2}{2 t^2} + t  & \text{ if } x\geq (1+r)^{1/2}t^{3/2}. 
\end{cases}
\end{equation*}

We claim that the best strategy of Alice depends on the sign of $\mu$.

\medskip 
\noindent {\bf $\bullet$ Case 1: $\mu(t,x)\geq 0$. Alice's best strategy is $\vartheta\equiv 0$.} Suppose there is an alternative strategy $\vartheta_0$ which provides her with a better reward. 
Bob can always play the optimal curve $\gamma_*$ in the absence of stopping time. Recall from Section \ref{sec:kernel1D} that there is one intermediate time $s_*\in [0,t]$ which realizes the minimum of $\frac{x^2}{2s^2} + s - r(t-s)$, and the minimal value is precisely $\mu(t,x)$. Then $s_* = \min(t,((1+r)x^2)^{1/3})$.  
Suppose that $s_*<t$, then
\begin{equation*}
\gamma_*(s) = \begin{cases}
s \dfrac{x}{s_*} & \text{if}\quad s <s_*\medskip\\
x & \text{if}\quad s \geq s_*
\end{cases}
\end{equation*}
When considering the stopping time $\vartheta_0[\gamma_*]$ for this specific trajectory, we have the following alternative: if $\vartheta_0\geq s_*$ then the reward is $-r(t-\vartheta_0) \leq 0 \leq \mu(t,x)$, whereas if $\vartheta_0< s_*$ then the reward is $(s_* - \vartheta_0) - r(t-s_*) + \frac{x^2}{2(s^*)^2} = \frac{x^2}{2(s_*)^2} + s_* - r(t-s_*) - \vartheta_0 = \mu(t,x) - \vartheta_0\leq \mu(t,x) $. Hence, the stopping time $\vartheta_0$ induces a better reward for Bob.  

Suppose on the contrary that $s_* = t$. Then, $\gamma_*(s) = s \frac{x}{t}$. Consequently, the reward is 
$ (t - \vartheta_0) + \frac{x^2}{2t^2} = \mu(t,x) - \vartheta_0$, which is again better for Bob since $\vartheta_0 \geq 0$.


\medskip 
\noindent {\bf $\bullet$ Case 2: $\mu(t,x)\leq 0$. Alice's best strategy is $\vartheta\equiv t$.} 

This results obviously in a zero reward.  Suppose there is an alternative strategy $\vartheta_0$ which provides her with a positive reward. Then, the same reasoning as in Case 1 can be followed, with Bob's same strategy. If $\vartheta_0\geq s_*$, then the reward is non-positive. If $\vartheta_0< s_*$, then the reward is simply $\mu(t,x) - \vartheta_0$, which is worse for Alice. 

As a conclusion, we have $\mU(t,x,0) = \max(0,\mu(t,x))$. 

This concludes the proof of Theorem \ref{th:nonlin}. 
\end{proof}

We conclude this article by giving the proof of the propagation result.
\begin{proof}[Proof of Theorem \ref{th:propagation}]
Clearly, we have $\{(t,x): x> \Upsilon t^{3/2}\} =\{(t,x): \mu(t,x)>0\} $ by definition of the constant $\Upsilon$ \eqref{eq:Upsilon}.
Let $K$ be a compact subset of $\mathrm{Int}\:\{\mu>0\}$. There exists a constant $\delta>0$ such that $\mu>\delta$ on $K$.
Recall that we have used the method of half-relaxed limits, so that convergence of $u^\eps \to \mU$ is locally uniform, see {\em e.g.} \cite[Lemma 6.2]{achdou_hamilton-jacobi_2013}. Since $\mu$ is the partial minimum $\mu(t,x) = \minp \mU(t,x) = \mU(t,x,0)$ by definition,  we can find $\eps_0>0$ such that $u^\eps(t,x,v)>\delta/2$ for $(t,x,v)\in K\times \mB(0,R)$ for some large $R$, and $\eps<\eps_0$.
We can deal alternatively with the large velocities outside $\mB(0,R)$, since $f^\eps \leq M_\eps$, as follows:
\begin{equation*}
\rho_\eps(t,x) \leq \int_{|v|\leq R} f^\eps(t,x,v)\, dv + \int_{|v|>R} f^\eps(t,x,v)\, dv \leq 2R \exp\left (-\frac{\delta}{2\eps}\right ) +\int_{|v|>R} M_\eps(v)\, dv \, .
 \end{equation*} 
We deduce that $\rho^\eps(t,x)\to 0$ as $\eps\to 0$.

In order to prove the opposite, we adapt the technique of \cite[Section 4]{evans_pde_1989} in the context of Section \ref{sec:Conv} (sub-solution step). Let $K$ be a compact subset of $\mathrm{Int}\:\{\mu<0\}$, and $(t_0,x_0)\in K$.  
We define the test function $\phi(t,x,v) = \frac12(t-t_0)^2 + \frac12(x - x_0)^2 + \left (\frac{1+\delta}2\right ) v^2$ for $\delta>0$ arbitrary.  Clearly, $\mU - \phi$ attains a local strict maximum at $(t_0,x_0,0)$, which is global with respect to velocity, since $\mU(t,x,v) =  v^2/2$ in the neighbourhood of $(t_0,x_0)$. Hence, there exists $(t_\eps,x_\eps,v_\eps)\to (t_0,x_0,0)$ such that $u^\eps - \phi$ admits a local maximum at  $(t_\eps,x_\eps,v_\eps)$ with respect to $(t,x)$, which is also global with respect to $v$. Plugging the first order condition in the equation for $u^\eps$, namely,
\begin{multline*}
\partial_t \u^{\eps}(t,x,v) + v  \partial_x  \u^{\eps}(t,x,v) - 1 \\= - \frac{1+r}{(2 \pi\eps)^{1/2} }
\int_{\R} \exp \left( \frac{\u^{\eps}(t,x,v) - \u^{\eps}(t,x,v') - \vert v\vert^2/2 }{\e} \right) dv' + r \rho^\eps(t,x)\, ,
\end{multline*}
we find
\begin{align*}
&\partial_t \phi(t_\eps,x_\eps,v_\eps) + v_\eps \partial \phi_\eps(t_\eps,x_\eps,v_\eps) - 1 \\
&\qquad\qquad\qquad \leq - \frac{1+r}{(2 \pi\eps)^{1/2} }
\int_{\R} \exp \left( \frac{\phi(t_\eps,x_\eps,v_\eps) - \phi(t_\eps,x_\eps,v') - \vert v_\eps\vert^2/2 }{\e} \right) dv' + r \rho^\eps(t_\eps,x_\eps)\\
&\qquad\qquad\qquad  = - \frac{1+r}{(2 \pi\eps)^{1/2} }
\int_{\R} \exp \left(  -\left (\frac{1+\delta}{2\e}\right ) |v'|^2 + \frac\delta{2\e} \vert v_\eps\vert^2   \right) dv' + r \rho^\eps(t_\eps,x_\eps)
\end{align*}
We obtain that
\begin{equation*}
(t_\eps - t_0) + v_\eps (x_\eps - x_0) - 1 \leq - \frac{1+r}{(1+\delta)^{1/2}}  + r \rho^\eps(t_\eps,x_\eps)\, .
\end{equation*}
On the other hand, since $u^\eps(t_\eps,x_\eps,v) - u^\eps(t_0,x_0,v) \geq \phi(t_\eps,x_\eps,v) - \phi(t_0,x_0,v)\geq 0$ by the definition of the local maximum (global with respect to $v$), we have $\rho^\eps (t_\eps,x_\eps)\leq \rho^\eps(t_0,x_0)$. 
By letting $\eps \to 0$, we get eventually that 
\begin{equation*}
\liminf_{\eps\to 0}\rho^\e(t_0,x_0) \geq \frac1r \left (\frac{1+r}{(1+\delta)^{1/2}} - 1 \right )\, .
\end{equation*}
As $\delta$ is arbitrary small, we deduce that $\liminf \rho^\eps(t_0,x_0) \geq 1$.  
\end{proof}

\appendix

\section{Extension to quadratic spatial growth}

\label{sec:unbded}

Being given the shape of the function $L$, see \eqref{eq:candidates all} in the one-dimensional case, which admits quadratic growth, both in space and velocity, at any positive time,  it is natural to investigate the well-posedness of \eqref{eq:limit} under more general growth conditions than \eqref{eq:v2 plus borne}. This is the purpose of the next result. This extension is also required in Section \ref{sec:acc} to cope with unbounded initial data $u_0$, which are very natural for the analysis of propagation phenomena.

\begin{thm}[Comparison principle]\label{theo:comp2}
Let $\underline{u}$ (resp. $\overline{u}$) be a viscosity sub-solution (resp. super-solution) of \eqref{eq:limit} on $(0,T)\times\R^{2n}$ with  continuous  initial data $\underline{u}_0 \leq \overline{u}_0$. Assume that  there exists  some $A>1$ such that for all $(t,x,v)$,
\begin{equation} \label{eq:growth condition}
\frac1A |v|^2 - A|x|^2 - A \leq \underline{u}(t,x,v), \overline{u}(t,x,v) \leq A |v|^2 + A|x|^2 + A\, .
\end{equation}
Then $\underline{u} \leq \overline{u}$ on $(0,T) \times \R^{2n}$.
\end{thm}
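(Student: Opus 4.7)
The strategy is to replay the doubling-of-variables argument of Theorem \ref{theo:comp}, with penalization functions strengthened to cope with the quadratic growth in $x$ and $v$ permitted by \eqref{eq:growth condition}. The old penalty $\tfrac{\delta}{2}|x|^2$ no longer dominates $\underline u - \overline u$, since the latter may grow like $2B|x|^2$; and the velocity cut-off $(|v|^2-R^2)_+$ must likewise be amplified so as to beat the $2A|v|^2$ growth along the $v$ direction.

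Concretely, I would replace $\tfrac{\delta}{2}|x|^2$ by $\delta(1+|x|^2)^{1+\eta}$ for a fixed $\eta\in(0,1)$, and replace $(|v|^2-R^2)_+$ by $\kappa(|v|^2-R^2)_+$ for any constant $\kappa>2A$. With these choices, the new auxiliary functions $\widehat\chi$ and $\widetilde\chi$ built as in \eqref{eq:chi hat}--\eqref{eq:chi tilde} tend to $-\infty$ as $|x|,|v|\to\infty$, so they attain their suprema. The analogue of Lemma \ref{lem:minconf} is immediate from \eqref{eq:growth condition}: for any $v^*\in\mathcal{S}(\overline u)(t,x)$,
\[ \tfrac{1}{A}|v^*|^2 - B|x|^2 - C \leq \overline u(t,x,v^*) \leq \overline u(t,x,0) \leq B|x|^2 + C, \]
whence $|v^*|\leq \rho(x)$ with $\rho(x) := \sqrt{2A(B|x|^2+C)}$. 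Combined with the \emph{a priori} bound on $\delta(1+|x_0|^2)^{1+\eta}$ at the maximum point (deduced by evaluating $\widehat\chi$ or $\widetilde\chi$ at $(0,0,0)$), this yields explicit bounds on $|x_0|$ and $|v_0|$ in terms of $\delta$, $R$ and the constants $A,B,C$. The parameter $R$ is then fixed large enough so that $R\geq \rho(x_0)$ for all the maximum points under consideration.

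One then replays Cases 1, 2.1 and 2.2 of the original proof essentially verbatim, modulo the new penalty terms. The only substantive change occurs in the final contradiction step: differentiating the new spatial penalty $\delta(1+|x|^2)^{1+\eta}$ in the sub-solution viscosity inequality produces an extra contribution $v_0\cdot\nabla_x\bigl[\delta(1+|x_0|^2)^{1+\eta}\bigr]$ of magnitude $O\bigl(\delta|v_0|(1+|x_0|)^{1+2\eta}\bigr)$. The key estimate that in Theorem \ref{theo:comp} reads $\alpha/T^2\leq -\delta v_0\cdot\tilde x_0$ thus becomes
\[ \frac{\alpha}{T^2}\leq C\,\delta|v_0|\,(1+|x_0|)^{1+2\eta}. \]
Since $|x_0|$ is controlled by a negative power of $\delta$ and $|v_0|\leq\rho(x_0)+O(1)$, the right-hand side is of the form $C'\delta^{\beta}$ for some $\beta=\beta(\eta)>0$; choosing $\delta$ as a sufficiently small power of $\alpha$ produces the required contradiction. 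Letting $\eps\to 0$, then $\alpha\to 0$, $R\to+\infty$ and finally $\delta\to 0$ gives $\underline u\leq\overline u$ on $[0,T)\times\R^{2n}$.

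The main obstacle is the quantitative balance between the parameters $\delta$, $\alpha$, $R$ and the $x$-dependent velocity confinement $\rho(x)$. One must arrange simultaneously that (i) the suprema of $\widehat\chi,\widetilde\chi$ are attained, (ii) the velocity localization of $\mathcal{S}(\overline u)$ occurs inside $B_R(0)$ at the corresponding maximum points, and (iii) the additional terms coming from the strengthened spatial penalty remain small enough to reach a contradiction after passing to the limit $\eps\to 0$, while still leaving room to let $\alpha,\delta\to 0$ afterwards. The flexibility of the exponent $\eta\in(0,1)$ and of the constant $\kappa>2A$ is precisely what allows this balance to be closed.
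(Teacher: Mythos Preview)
Your parameter balance does not close. Trace through your own estimates: from $\widetilde\chi(\tilde t_0,\tilde x_0,v_0)\geq\widetilde\chi(0,0,0)$ together with \eqref{eq:growth condition} you get
\[
\delta(1+|\tilde x_0|^2)^{1+\eta}+(\kappa-A+1/A)(|v_0|^2-R^2)_+\;\lesssim\;|\tilde x_0|^2+R^2+1,
\]
so at best $|\tilde x_0|\lesssim\delta^{-1/(2\eta)}$ and $|v_0|\lesssim R+|\tilde x_0|$. Plugging these into your final error term gives
\[
\delta\,|v_0|\,(1+|\tilde x_0|)^{1+2\eta}\;\sim\;\delta\cdot\delta^{-1/(2\eta)}\cdot\delta^{-(1+2\eta)/(2\eta)}\;=\;\delta^{-1/\eta}\longrightarrow+\infty,
\]
so the exponent $\beta$ you claim is negative, not positive, and no contradiction follows. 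There is a second, related problem: your requirement $R\geq\rho(x_0)$ is circular, since $\rho(x_0)\sim|x_0|\sim\delta^{-1/(2\eta)}$ depends on $\delta$, while you also need $R$ fixed before choosing $\delta$.

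The paper repairs both issues with two devices you are missing. First, the spatial penalty is taken as $\tfrac{\delta}{4}e^{\gamma(t-T)}|x|^4$: the exponential prefactor means that $\partial_t$ of the test function produces an additional term $\tfrac{\gamma\delta}{4}e^{\gamma(t-T)}|x|^4$ on the sub-solution side, which, for $\gamma$ large enough, \emph{absorbs} the bad transport term $\delta|x|^2\,v_0\cdot x$ (of order $\delta|x|^4$ since $|v_0|\lesssim|x|+R$), leaving only a remainder bounded by a constant $K(R)$. Then $\alpha/T^2\leq\delta K(R)$ and one concludes. Second, the velocity penalty is replaced by $\Lambda\bigl(|v|^2-B_1|x-tv|^2-R^2\bigr)_+$: this grows with $|x|$ so that it automatically vanishes on $\mathcal S(\overline u)(t,x)$ (where $|v^*|\lesssim|x|$), and --- crucially --- the quantity $x-tv$ is constant along free transport, so $(\partial_t+v\cdot\nabla_x)$ annihilates this penalty and it contributes nothing to the viscosity inequalities. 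A short-time condition $T<T_0=1/(4\sqrt{AB})$ is needed to make this confinement work, and the result is then iterated on intervals of length $T_0$.
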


An immediate consequence of \eqref{eq:growth condition} is that any partial minimum with respect to velocity,
say attained at $v_0$, satisfies
\begin{equation} |v_0|^2 \leq 2A^2\left(  |x|^2 +  1 \right)\,
  . \label{eq:localization minimum x} 
  \end{equation}

\begin{proof}
We do not reproduce all the details of the proof of the comparison principle, but only indicate the
major changes that have to be made in comparison with Section \ref{sec:Comp}. 
The main discrepancy
concerns the localization of minima with respect to the velocity
variable. Indeed, they are no more confined uniformly with respect
to $x$ as in Section \ref{sec:Comp}, see \eqref{eq:localization minimum x}. We have to adapt the penalization terms in  $\wtildechi$ accordingly. 
As a side effect, a short time condition $T<T_0$ is required. However, this does not affect the conclusion since we can iterate the comparison principle on time intervals of length $T_0/2$. Let us define $T_0$ as follows, together with an auxiliary parameter $B$:
\begin{equation*} 
T_0 = \dfrac{1}{4A}\, , \quad B = \frac{4A^2}{1 - 4A^2 T_0^2} = \frac{16}{3}A^2\, .
\end{equation*}
Let $T<T_0$. Let $\kappa<1$, $\eps>0$, $\alpha>0$, $R>0$. Let $\delta>0$, $\gamma>0$, $B>0$ and $\Lambda>0$ to be suitably chosen below. We cook up the functions with twice the number of variables (except velocity) as in Section  \ref{sec:Comp}:
\begin{multline*}
\wtildechi(t,x,s,y,v) = \kappa\underline{b}(t,x,v) - \overline{b}(s,y,v) -   \frac\delta4 e^{ \gamma t}\left (  |x|^4 + \vert y \vert^4 \right) - \alpha\left( \dfrac1{T- t} + \dfrac1{T- s}\right) 
\\  - \frac1{2\eps} \left(|t -s|^2 + |x-y|^2\right) - \Lambda\left(  |v|^2 -
  B|x - tv|^2  - R^2\right)_+ \, .
\end{multline*}
\begin{multline*}
\overlinechi(t,x,s,y) = \kappa \underline{m}(t,x) - \overline{m}(s,y) -   \frac\delta4 e^{\gamma t}\left (  |x|^4 + \vert y \vert^4 \right) -  \alpha \left( \dfrac1{T- t} + \dfrac1{T- s}\right)
\\   - \frac1{2\eps} \left(|t -s|^2 + |x-y|^2\right)\, .
\end{multline*}
The quadratic penalty term in \eqref{eq:chiapp} has been turned into a quartic
one in order to ensure the existence of a minimum with respect to
space variable. 
The exponential prefactor is inspired from \cite[page 72]{achdou_hamilton-jacobi_2013}.
Also, the penalization with respect to velocity has
been extended in order to take into account the nonuniform velocity
confinement. The particular dependency on $x - tv$ is chosen in order to be transparent when applying the free transport operator $\partial_t + v\cdot \nabla_x$. 

Finally, we define the  maximum values:
\begin{equation*} 
\omega = \max_{((0,T) \times \R^n)^2}\overlinechi \,, \quad \Omega = \max_{((0,T) \times \R^n)^2 \times \R^n} \wtildechi \,.
\end{equation*}

To establish that $\omega\leq \Omega$, as in Lemma \ref{lem:Oomega}, it is enough to check that the
penalty term  $\left(  |v|^2 -  B|x - tv|^2  - R\right)_+$
vanishes at the minima of   $\overline{u}$ with
respect to velocity. This is indeed a consequence of \eqref{eq:localization
  minimum x} and the short time condition. 
 
\begin{lem}\label{lem:minconf A} 
Let $T<T_0$. Assume that $R \geq 2A \left(1 +  T_0^2 B\right)^{1/2}$.
For all $(t,x) \in (0,T) \times \R^{n}$, the velocity penalization vanishes on $\aminp  \overline{u} (t,x)$, that is
\begin{equation*}
 \forall v_0\in \aminp \overline{u} (t,x) \quad \left(|v_0|^2 - B|x-tv_0|^2 - R^2\right)_+ = 0.
\end{equation*}
\end{lem}

\begin{proof}
  Let $v_0$ be a
minimum, we have:
\begin{align*}
|v_0|^2 - B|x-tv_0|^2 - R^2 &\leq \left(1 - t^2 B\right) |v_0|^2
\tg{+} 2 B t x\cdot v_0  - B |x|^2  - R^2 \\
& \leq \left(  1+  t^2 B\right) |v_0|^2
-\frac12 B |x|^2  - R^2\\
& \leq 2A^2\left(1 +  T_0^2 B\right) \left( |x|^2 + 1 \right) -\frac12 B |x|^2  - R^2 \, ,
\end{align*} 
where the last inequality comes from \eqref{eq:localization minimum x}.
Hence, we get
\begin{equation*}
|v_0|^2 - B|x-tv_0|^2 - R^2  \leq \frac12\left ( 4 A^2 
+ \left ( 4 A^2 T_0^2 - 1 \right ) B \right )|x|^2 + 2A^2\left (1 +  T_0^2 B\right)  
 - R^2\, .
\end{equation*}
The right-hand-side is certainly nonpositive thanks to our choice of $B$ and $R$.
\end{proof}

The estimates on $(\tx,\ty,\tv)$ read as follows:
\begin{equation*}
\begin{cases}
&\delta \max\left (  |\tx|^4,|\ty|^4\right ) \leq 16A \left ( |\tv|^2 +  \max\left (  |\tx|^2,|\ty|^2\right ) + 1 \right )\\
&\Lambda \left(  |\tv|^2 -  B|\tx - \tt\tv|^2  - R\right) \leq 16A \left ( |\tv|^2 +  \max\left (  |\tx|^2,|\ty|^2\right ) + 1 \right )\,, 
\end{cases}
\end{equation*}
Expanding the left-hand-side, we find 
\begin{align*}
\Lambda \left ( 1 - 2 B T_0^2\right ) |\tv|^2  - 16 A |\tv|^2 &\leq 2 \Lambda B |\tx|^2 + \Lambda R^2 +  16A \left (   \max\left (  |\tx|^2,|\ty|^2\right ) + 1 \right )\\
\left (\frac13\Lambda - 16 A\right )  |\tv|^2  & \leq 
 16 \Lambda A^2  |\tx|^2 + \Lambda R^2 +  16A \left (   \max\left (  |\tx|^2,|\ty|^2\right ) + 1 \right )
\end{align*}
It remains to choose $\Lambda = 60 A$, so  as to get
\begin{equation}\label{eq:tv estimate}
|\tv|^2  \leq C \left ( R^2 +   \max\left (  |\tx|^2,|\ty|^2\right ) + 1 \right )\, ,
\end{equation} 
for some constant $C$ depending only on $A$. Next, we deduce some useful compactness estimate: 
\begin{equation*}
\delta  \max\left (  |\tx|^4,|\ty|^4\right )  \leq C \left ( R^2 +  \max\left (  |\tx|^2,|\ty|^2\right ) + 1  \right )\, .
\end{equation*}

We are ready to perform the comparison argument. If $(0,0)$ is an accumulation point of $(\tt,\ts)$ as $\eps \to 0$, then the same argument as in Section \ref{sec:Comp} can be reproduced. 

Otherwise we distinguish between two cases:

\medskip
\noindent{\bf \# Case 1:  $\overline{b}(\ts,\ty,\tilde v) < \underset{w \in \R^n}{\min}\left(\overline{b}(\ts,\ty,w) + \frac{\vert w \vert^2}{2} \right) =    \minp \overline{u}(\ts,\ty)$.} 
\medskip

In this case, $\partial_t \underline{b} + v\cdot \nabla_x \underline{b} - 1 \leq 0$ and $\partial_t \overline{b} + v\cdot \nabla_x \overline{b} - 1 \geq 0$ in the viscosity sense.
We first use the test function
\begin{multline*}
\phi_2(s,y,v) = \kappa\underline{b}(\tt,\tx,v) -  \frac\delta4 e^{\gamma \tt } \left( |\tx|^4 + \vert y \vert^4 \right) - \alpha\left( \dfrac1{T- \tt} + \dfrac1{T- s}\right) \\ - \frac1{2\eps} \left(|\tt-s|^2 + |\tx-y|^2\right)  - \Lambda\left(  |v|^2 -
  B|\tx - \tt v|^2  - R^2\right)_+,
\end{multline*}
associated to the supersolution $\overline{b}$ at the point $(\ts,\ty,\tilde v)$. By using  Definition  \ref{def:supersol intro} of a super-solution, this yields
\begin{equation}
-\dfrac\alpha{(T - \ts)^2}  + \frac1\eps (\tt -\ts) + \tilde  v \cdot \left( - \delta e^{\gamma \tt } |\ty|^2 \ty - \frac1\eps ( \ty- \tx)\right) - 1 \geq 0\, . \label{eq:chain rule 1.1 sec 6}
\end{equation}
On the other hand, using the test function
\begin{multline*}
\phi_1(t,x,v) = \overline{b}(\ts,\ty,v) + \frac\delta4 e^{\gamma t} \left( |x|^4 + \vert \ty \vert^4 \right) + \alpha \left( \dfrac1{T- t} + \dfrac1{T- \ts} \right) \\  + \frac1{2\eps} \left(|t-\ts|^2 + |x-\ty|^2\right) + \Lambda\left(  |v|^2 -
  B|x - t v|^2  - R^2\right)_+\, ,
\end{multline*}
associated to the subsolution $\kappa \underline{b}$ at the point $(\tt,\tx,\tilde v)$, we obtain
\begin{equation}
\dfrac\alpha{(T - \tt)^2} + \frac{\delta \gamma}4 e^{\gamma \tt } \left( |\tx|^4 + \vert \ty \vert^4 \right) + \frac1\eps (\tt - \ts) +\tilde  v \cdot \left( \delta e^{\gamma \tt } |\tx|^2 \tx +  \frac1\eps ( \tx - \ty) \right) - \kappa \leq 0\,,
\label{eq:chain rule 2.1 sec 6}
\end{equation}
by using Definition  \ref{def:subsol intro} of a sub-solution. By substracting \eqref{eq:chain rule 2.1 sec 6} to \eqref{eq:chain rule 1.1 sec 6}, we obtain
\begin{align*}
& \alpha \left( \dfrac1{(T - \tt)^2} + \dfrac1{(T - \ts)^2} \right) + \frac{\delta \gamma}4 e^{\gamma \tt }  \left( |\tx|^4 + \vert \ty \vert^4 \right) + \delta e^{\gamma \tt }\tilde  v \cdot \left( |\tx|^2 \tx  + |\ty|^2 \ty  \right) + (1-\kappa) \leq 0,\\
& \frac{\delta\gamma}4 e^{\gamma \tt } \left (  |\tx|^4 + \vert \ty \vert^4 - \frac{8|\tv|}\gamma  \max\left ( |\tx|^3,|\ty|^3 \right )
\right )   + 1 - \kappa \leq 0\, .
\end{align*}
Using  the linear bound of $|\tv|$ on $ |\tx|,|\ty|$ \eqref{eq:tv estimate}, we can choose $\gamma$ large enough depending on $A$ and $R$ to get a contradiction.

\medskip

\noindent{\bf \# Case 2:  $\overline{b}(\ts,\ty,\tilde v) \geq \underset{w \in \R^n}{\min}\left(\overline{b}(\ts,\ty,w) + \frac{\vert w \vert^2}{2} \right)  = \minp \overline{u}(\ts,\ty)$.} 

\medskip

The arguments of Section \ref{sec:Comp} can be reproduced, with the help of Lemma \ref{lem:minconf A} (no penalty on $\aminp \overline{u}(\ts,\ty) $), so as to get $\aminp \overline{u}(\ts,\ty) = \{0\}$. We conclude as before that $(\tt,\tx,\ts,\ty)$ realizes the maximum value of $\overlinechi$, allowing to use the viscosity conditions on $\minp u$. The last step of Section \ref{sec:Comp} is modified as follows:
the first test function is again $\psi_1(t,x) = \phi_1(t,x,0)$, and the subsolution criterion is:
\begin{equation*}
\dfrac\alpha{(T - \tt)^2} + \frac{\delta \gamma}4 e^{\gamma \tt } \left( |\tx|^4 + \vert \ty \vert^4 \right) + \frac1\eps (\tt - \ts) \leq 0\, ,
\end{equation*} 
whereas the super-solution viscosity criterion is as before:
\begin{equation*}
-\dfrac\alpha{(T - \ts)^2}  + \frac1\eps (\tt -\ts) \geq 0\, .
\end{equation*}
This is a contradiction as $\alpha>0$.
\end{proof}

\printbibliography

\end{document}